\newcommand{\Ac}{\mathcal{A}^c}
\newcommand{\al}{\alpha}
\newcommand{\V}{\mathcal{V}}
\newcommand{\R}{\mathbb{R}}
\newcommand{\N}{\mathbb{N}}
\newcommand{\mH}{\mathcal{H}}
\newcommand{\F}{\mathcal{F}}
\newcommand{\sB}{\widetilde{B}}
\newcommand{\C}{\mathcal{C}}
\newcommand{\lc}{\scalebox{1.8}{$\llcorner$}}
\newcommand{\La}{\Lambda}
\newcommand{\si}{\sigma}
\newcommand{\Si}{\Sigma}
\newcommand{\de}{\delta}
\newcommand{\ep}{\epsilon}
\newcommand{\pr}{\prime}
\newcommand{\Om}{\Omega}
\newcommand{\Ga}{\Gamma}
\newcommand{\ga}{\gamma}
\newcommand{\lap}{\triangle}
\newcommand{\ti}{\tilde}
\newcommand{\Z}{\mathcal{Z}}
\newcommand{\mS}{\mathcal{S}}
\newcommand{\f}{\mathbf{f}}
\newcommand{\M}{\mathbf{M}}
\newcommand{\bL}{\mathbf{L}}
\newcommand{\bI}{\mathbf{I}}
\newcommand{\mf}{\mathbf{f}}
\newcommand{\mF}{\mathbf{F}}
\newcommand{\mR}{\mathcal{R}}
\newcommand{\X}{\mathfrak{X}}
\newcommand{\sA}{\mathscr{A}}
\newcommand{\btau}{\boldsymbol\tau}
\newcommand{\bleta}{\boldsymbol{\eta}}
\newcommand{\n}{\mathbf{n}}
\newcommand{\md}{\mathbf{d}}
\newcommand{\rom}[1]{\expandafter\romannumeral #1}
\newcommand{\Rom}[1]{\uppercase\expandafter{\romannumeral #1}}
\newcommand{\VarTan}{\operatorname{VarTan}}
\newcommand{\spt}{\operatorname{spt}}
\newcommand{\dist}{\operatorname{dist}}
\newcommand{\vol}{\operatorname{Vol}} 
\newcommand{\Area}{\operatorname{Area}}
\newcommand{\Clos}{\operatorname{Clos}}
\newcommand{\interior}{\operatorname{int}}
\begin{document}

\newtheorem{theorem}{Theorem}[section]
\newtheorem{proposition}[theorem]{Proposition}
\newtheorem{corollary}[theorem]{Corollary}

\newtheorem{claim}{Claim}

\theoremstyle{remark}
\newtheorem{remark}[theorem]{Remark}

\theoremstyle{definition}
\newtheorem{definition}[theorem]{Definition}

\theoremstyle{plain}
\newtheorem{lemma}[theorem]{Lemma}

\numberwithin{equation}{section}

\title[Min-max theory for CMC hypersurfaces]{Min-max theory for constant mean curvature hypersurfaces}

\author[Xin Zhou]{Xin Zhou}
\address{Department of Mathematics, University of California Santa Barbara, Santa Barbara, CA 93106, USA}
\email{zhou@math.ucsb.edu}

\author[Jonathan J. Zhu]{Jonathan J. Zhu}
\address{Department of Mathematics, Harvard University, Cambridge, MA 02138, USA}
\email{jjzhu@math.harvard.edu}

\maketitle

\pdfbookmark[0]{}{beg}

\begin{abstract}
In this paper, we develop a min-max theory for the construction of constant mean curvature (CMC) hypersurfaces of prescribed mean curvature in an arbitrary closed manifold. As a corollary, we prove the existence of a nontrivial, smooth, closed, almost embedded, CMC hypersurface of any given mean curvature $c$. Moreover, if $c$ is nonzero then our min-max solution always has multiplicity one.%
\end{abstract}

\setcounter{section}{-1}

\section{Introduction}
\label{S:intro}

A hypersurface $\Sigma^n$ in a Riemannian manifold $M^{n+1}$ has constant mean curvature if it is a critical point of the area functional amongst variations preserving the enclosed volume. Equivalently, hypersurfaces of constant mean curvature $c$ are critical points of the Lagrange-multiplier functional 
\begin{equation}
\label{E:Ac0}
\mathcal{A}^c = \Area - c\vol. 
\end{equation}
Constant mean curvature (CMC) hypersurfaces constitute a classical and extensively-studied topic in differential geometry, and play an essential role in many areas, from isoperimetric problems \cite{ros05} to the modeling of interface phenomena \cite{Lobaton-Salamon07, Lopez05} and to general relativity \cite{HY96, QT07, CPP10}. Despite the classical nature of the problem, relatively few examples of closed CMC hypersurfaces were known, even for $n=2$, until the breakthrough work of Wente \cite{Wente86}. Many attempts have been made to construct more CMC hypersurfaces, especially with \textit{prescribed} constant mean curvature, c.f. \cite{Heinz54, Hildebrandt70, struwe85, Kap90, Ye91, Duzaar-Steffen96, pacard05, MMP06, Rosenberg-Smith16}. However, these works left wide open the question of which values may be prescribed - that is, for which constants $c$ does there exist a closed hypersurface of constant mean curvature $c$?

In this article we construct, via a min-max approach, nontrivial closed CMC hypersurfaces of \textit{any} prescribed mean curvature, in any smooth closed Riemannian manifold $M^{n+1}$ of dimension at most seven. (The dimensional restriction arises from, and matches with, the well-known regularity theory for minimal hypersurfaces \cite{SSY75, SS81}; see also \cite{wick14}.)

\begin{theorem}
\label{thm:mainA}
Let $M^{n+1}$ be a smooth, closed Riemannian manifold of dimension $3\leq n+1\leq 7$. Given any $c\in\mathbb{R}$, there exists a nontrivial, smooth, closed, almost embedded hypersurface $\Sigma^n$ of constant mean curvature $c$. 
\end{theorem}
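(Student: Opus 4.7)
The plan is to develop an Almgren--Pitts style min-max theory for the functional $\Ac = \Area - c\vol$ from \eqref{E:Ac0}. Because $\vol$ has no meaning on a generic $n$-cycle, I would work in the space $\C(M)$ of Caccioppoli sets $\Om\subset M$ with the flat topology, setting $\Ac(\Om) := \mH^n(\partial^*\Om) - c|\Om|$. Min-max is then performed over continuous one-parameter sweepouts $\Phi:[0,1]\to\C(M)$ with $\Phi(0)=\emptyset$, $\Phi(1)=M$, yielding the $c$-width
\[ \mathbf{L}^c = \inf_{\Phi}\sup_{t\in[0,1]}\Ac(\Phi(t)). \]
Any such sweepout must realize every intermediate volume, so the isoperimetric inequality on $M$ forces $\mathbf{L}^c$ strictly above the endpoint values $\Ac(\emptyset)=0$ and $\Ac(M)=-c\vol(M)$; this yields a genuine mountain-pass level and, in particular, rules out the trivial critical point.

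Next I would carry out a Pitts-style discretization, replacing continuous paths by finite sequences of Caccioppoli sets with fineness controlled in the flat $\F$-norm, followed by a pull-tight deformation whose vector fields model the negative $\Ac$-gradient (in the smooth case, the normal field $-(H-c\nu)\nu$). From an optimal min-max sequence $\Om_i$ I expect to extract a subsequential limit: the reduced boundaries $|\partial^*\Om_i|$ converge as varifolds to some $V$, the sets converge as Caccioppoli sets to some $\Om_\infty$, and $\|V\|(M)-c|\Om_\infty| = \mathbf{L}^c$. The pull-tight should force $V$ to be $c$-\emph{stationary}: for every $X\in\X(M)$,
\[ \delta V(X) = c\int_{\partial^*\Om_\infty}\langle X,\nu_{\Om_\infty}\rangle\, d\mH^n. \]

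The hardest step is the regularity theory for $V$. Following Pitts, I would introduce a $c$-analogue of \emph{almost minimizing in annuli}: $\Om$ is $c$-a.m.\ in an annulus if it cannot be continuously deformed within the annulus to reduce $\Ac$ by a definite amount without some intermediate slice having much larger $\Ac$. The regularity argument would then require three ingredients: (i) a replacement lemma producing $\Ac$-stable Caccioppoli replacements inside annuli via a constrained obstacle-type minimization of $\Ac$; (ii) curvature estimates for stable, almost embedded CMC hypersurfaces in dimensions $n\leq 6$, extending the Schoen--Simon--Yau / Schoen--Simon theory to the CMC setting (this is where the dimension restriction in the theorem enters); and (iii) a gluing procedure combining overlapping replacements to produce smoothness of $\spt V$. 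Since distinct CMC sheets can be tangent from the same side without coinciding, the sharpest regularity one can hope for is \emph{almost embedded}: an immersion whose only self-intersections are one-sided tangential touches. The strong maximum principle would then pin the mean curvature down to $c$ with the correct orientation. The main obstacle is step (i): the classical Pitts replacement must be re-engineered for the volume-corrected energy $\Ac$, producing stable minimizers that respect both the Caccioppoli structure and the one-sided tangency phenomenon intrinsic to CMC geometry, neither of which appear in the minimal case.
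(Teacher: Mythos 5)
Your proposal follows the same overall strategy as the paper: min-max over Caccioppoli sets for $\Ac$, positive width via the small-volume isoperimetric inequality, a Pitts-style discretization and pull-tight, a $c$-analogue of the almost-minimizing property, replacements by constrained $\Ac$-minimization, Schoen--Simon(-Yau) curvature estimates for stable CMC hypersurfaces in $n\le 6$, and regularity in the almost-embedded class with one-sided tangential touching controlled by the strong maximum principle.

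There is, however, a genuine gap in your tightening step. You propose that the pull-tight should produce a $c$-stationary varifold in the precise sense
\[
\de V(X) \;=\; c\int_{\partial^*\Om_\infty}\langle X,\nu_{\Om_\infty}\rangle\, d\mH^n,
\]
with $\Om_\infty$ the Caccioppoli-set limit and $\nu_{\Om_\infty}$ its outer normal. This identity does not follow from tightening and would in general be false for a raw min-max limit. The varifold limit $V$ and the reduced boundary $\partial^*\Om_\infty$ need not coincide: the varifold can carry multiplicity or retain mass on sheets that cancel in the Caccioppoli (current) limit, so $\|V\|$ can be strictly larger than $\mH^n\lc\partial^*\Om_\infty$ and there is no canonical normal field on $\spt\|V\|$ along which to write the Euler--Lagrange equation. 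Indeed $\Ac$ is simply not a functional on varifolds, so there is no weak Euler--Lagrange identity to extract. The paper instead proves only the weaker property of \emph{$c$-bounded first variation}, $|\de V(X)|\le c\int_M|X|\,d\mu_V$, which is a closed condition under varifold convergence and suffices for the monotonicity formula and blowup analysis; it is obtained from the tightening flow via a quantitative decrease estimate for $\Ac$ on Caccioppoli-set deformations (inequality \eqref{E: decrease Ac by isotopy}). The precise CMC first-variation identity, together with the statement that $V$ is (with multiplicity one) the boundary of a Caccioppoli set, is an \emph{output} of the regularity theorem — recovered only after replacement construction, the analysis of touching sets, the boundary-orientation matching for successive replacements, and removal of the center singularity — not an input available after pull-tight. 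Your proposal implicitly assumes these conclusions at the start, which would short-circuit the hard part of the proof. Relatedly, since the Constancy Theorem is unavailable for varifolds with merely bounded first variation, showing that the original $V$ agrees with the replacement near the center requires an extra argument (the paper's moving-sphere/transversality comparison), a step your sketch does not account for.
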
 

Here we say that an immersed hypersurface $\Sigma$ is almost embedded if $\Sigma$ locally decomposes into smoothly embedded components that (pairwise) lie to one side of each other. That is, the sheets may touch but not cross; the example of touching spheres shows that this regularity is optimal. In particular, almost embedded hypersurfaces are automatically Alexandrov embedded.

We want to compare our result with a classical problem by Arnold \cite[page 395]{Arnold04} and Novikov \cite[Section 5]{Novikov82} on the periodic orbits of a charged particle in a magnetic field on a topological two sphere. It is conjectured that there exist closed embedded curves of any prescribed constant geodesic curvature. This conjecture remains open, and we refer to \cite{Ginzburg96, Rosenberg-Schneider11, Schneider11} for more backgrounds and some partial results of this conjecture. Our result can be viewed as a complete resolution of the higher dimensional analog of Arnold-Novikov conjecture.

\begin{remark}
For $c\neq 0$, we can prove that our min-max procedure converges to the constructed hypersurface $\Sigma$ with multiplicity 1. This is a stark and surprising contrast to the minimal ($c=0$) case, for which the min-max multiplicity 1 conjecture is a fundamental open problem \cite{MN16}. 
\end{remark}

The existence problem for CMC hypersurfaces has been studied from a number of perspectives. The boundary value problems were substantially developed by Heinz \cite{Heinz54}, Hildebrandt \cite{Hildebrandt70}, Struwe \cite{struwe85, struwe88}, etc. using the mapping method, and by Duzaar-Steffen \cite{Duzaar-Steffen96} using geometric measure theory, while both methods can only produce CMC hypersurfaces whose mean curvatures satisfy certain upper bound. For the case of closed CMC hypersurfaces, the more classical approach is to minimize the area functional amongst volume-preserving variations, that is, to solve the isoperimetric problem for a given volume. Indeed, for each fixed volume there exists a smooth minimizer (up to a singular set of codimension 7; see for instance \cite{Almgren76, Mo03, ros05}). However, this approach does not yield any control on the value of the mean curvature. 

Another class of approaches relies on perturbative methods. Given a closed minimal hypersurface, one may deform it to a CMC hypersurface, but only for very small values of the mean curvature. On the other end, one attempts to construct foliations by closed CMC hypersurfaces near minimal submanifolds of strictly lower dimension. This program was carefully implemented in various cases by Ye \cite{Ye91}, Mahmoudi-Mazzeo-Pacard \cite{MMP06}, and others (see the survey article \cite{pacard05}). The hypersurfaces produced by this approach necessarily have large mean curvatures, which in fact diverge as the hypersurfaces condense onto the minimal submanifold. 

We also mention the delicate gluing procedures pioneered by Kapouleas \cite{Kap90} as well as the degree theory developed by Rosenberg-Smith \cite{Rosenberg-Smith16}. These provide important examples of CMC hypersurfaces, but the former is typically restricted by the availability of known solutions, whilst the latter can only produce CMC hypersurfaces of fairly large mean curvature greater than some threshold depending on ambient manifolds. Finally, we remark that Meeks-Mira-Perez-Ros \cite{MMPR13, MMPR17} were able to determine, in the special case of homogeneous ambient 3-manifolds, precisely the values for which there exists a CMC 2-sphere with the specified mean curvature.

\vspace{0.3cm}
In order to prove Theorem \ref{thm:mainA}, we instead study CMC hypersurfaces from the perspective of the $\mathcal{A}^c$-functional. It is easy to see that the simplest method, minimization, does not succeed in detecting a nontrivial critical point for the $\Ac$-functional. In fact, the minimizer of $\Ac$ among domains $\Om$ in $M$ with smooth boundary is always the total manifold $M$, as $\Ac(M)=-c\vol(M)\leq \Ac(\Om)$. Therefore, the min-max method becomes the natural way to find nontrivial critical points of $\Ac$. (Note that minimization method does succeed for the Plateau problem with fixed boundary \cite{Duzaar-Steffen96}.)

For finding critical points of the area functional - that is, minimal hypersurfaces - the min-max method has been greatly successful. In \cite{AF62}, Almgren initiated a celebrated program to develop a variational theory for minimal submanifolds in Riemannian manifolds of any dimension and co-dimension using geometric measure theory, namely the min-max theory for minimal submanifolds. He was able to prove the existence of a nontrivial weak solution as stationary integral varifolds \cite{AF65}. Higher regularity was established in the co-dimension-one case by the seminal work of Pitts \cite{P81} (for $2\leq n\leq 5$) and later extended by Schoen-Simon [37] (for $n\geq 6$).  Colding-De Lellis \cite{CD03} established the corresponding theory using smooth sweepouts based on ideas of Simon-Smith \cite{Sm82}. Indeed, the preceeding body of work completely resolved the $c=0$ case of Theorem \ref{thm:mainA}.

Very recently, in a series of works, Marques-Neves \cite{MN14, Agol-Marques-Neves16, MN17} found surprising applications of the Almgren-Pitts min-max theory to solve a number of longstanding open problems in geometry, including their celebrated proof of the Willmore conjecture. Due to these tremendous successes, there have been a vast number of developments of this program in various contexts, including \cite{DT13, Montezuma16, Guaraco15, Ketover-Zhou15, MN16, Liokumovich-Marques-Neves16, Chambers-Liokumovich16, DR16, LiZ16, Ketover16, Song17}. In this regard, our work represents a natural extension of the min-max method to the CMC setting. 

\subsection{Min-max procedure} 

We now give a heuristic overview of our min-max method. In the main proofs, for technical reasons we will work with discrete families as in Almgren-Pitts, but here we will describe the key ideas using continuous families to elucidate those ideas.

Let $M$, $c$ be as in Theorem \ref{thm:mainA}. We only need to consider $c>0$. The $\mathcal{A}^c$ functional (\ref{E:Ac0}) is defined on open sets $\Omega$ with smooth boundary by  $\mathcal{A}^c(\Omega) = \Area(\partial \Omega) - c \vol(\Omega)$. Denote $I=[0, 1]$. Consider a continuous $1$-parameter family of sets with smooth boundary
\[ \{\Om_x: x\in I\}, \text{ with $\Om_0=\emptyset$ and $\Om_1=M$}. \]

Fix such a family $\{\Om^0_x\}$, and consider its homotopy class $[\{\Om^0_x\}]=\big{\{ } \{\Om_x\}\sim \{\Om^0_x\} \big{\}}$.
The {\em $c$-min-max value} is defined as
\[ \bL^c=\inf_{\{\Om_x\}\sim \{\Om^0_x\}}\max\{\Ac(\Om_x): x\in I\}. \]

A sequence $\{\{\Om^i_x\}: i\in\N\}$ with $\max_{x\in I} \Ac(\Om^i_x)\to \bL^c$ is typically called a {\em minimizing sequence}, and any sequence $\{\Om^i_{x_i}: x_i\in (0, 1), i\in N\}$ with $\Ac(\Om^i_{x_i})\to \bL^c$ is called a {\em min-max sequence}. 

Our main result (for the precise statement see Theorem \ref{thm:main-body}) then says that there is a nice minimizing sequence $\{\{\Om^i_x\}: i\in\N\}$, and some min-max sequence $\{\Om^i_{x_i}: x_i\in (0, 1), i\in N\}$, such that: 
\begin{theorem}
\label{T: main}
The sequence $\partial\Om^i_{x_i}$ converges as varifolds with multiplicity one to a nontrivial, smooth, closed, almost embedded hypersurface $\Si$ of constant mean curvature $c$.
\end{theorem}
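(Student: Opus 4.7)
The plan is to adapt the Almgren--Pitts min-max machinery to the Lagrange-multiplier functional $\Ac$. The first variation of $\Ac$ along an ambient vector field $X$ reduces to $\int_{\partial\Om}(H-c\nu)\cdot X$, so critical points are oriented boundaries with mean curvature vector $c\nu$ in the outward direction. I would split the argument into four stages: pull-tight, almost-minimizing extraction, regularity, and multiplicity one.

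For pull-tight, given a minimizing sequence $\{\Om^i_x\}$, I would construct an ambient deformation built from a continuous choice of $\Ac$-decreasing vector fields over the space of non-$c$-stationary varifolds, producing a new minimizing sequence in the same homotopy class for which every min-max subsequence $\partial\Om^i_{x_i}$ converges, as varifolds, to a $c$-stationary limit $V$. Nontriviality $\bL^c>0$ uses the boundary conditions $\Om_0=\emptyset$, $\Om_1=M$: the isoperimetric profile of $M$ gives a positive lower bound on $\Area(\partial\Om_x)$ at intermediate volumes. I would then promote $V$ to an \emph{almost-minimizing} varifold in small annuli using the Pitts combinatorial pigeonhole; this argument transfers from the minimal-surface case once one verifies that $\Ac$ behaves well under the discrete-to-continuous interpolations and under the combinatorial replacements used in the pigeonhole. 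Stability of $V$ in the $\Ac$-sense then follows from the standard deformation argument.

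The main analytic step is the \emph{regularity theorem}: any $c$-stationary, almost-minimizing, stable varifold $V$ is supported on a smooth almost embedded hypersurface of constant mean curvature $c$. The difficulty unique to the CMC setting is that $\Ac$ is not scale-invariant, so tangent cones to $V$ capture only the area part and are stable minimal cones, which by Schoen--Simon are hyperplanes for $n\leq 6$ (the source of the dimension restriction). My plan is to combine Allard's regularity, applied to $V$ viewed as a varifold with bounded generalized mean curvature, with constancy of multiplicity near a flat tangent point, to obtain $C^{1,\al}$-regularity of $\spt V$, and then bootstrap to $C^\infty$ by standard elliptic theory for the quasilinear CMC equation. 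At accumulation loci of sheets, the almost-minimizing property constrains touching sheets to lie on a single side, and the Hopf maximum principle applied to the CMC equation prohibits crossings, yielding almost embeddedness.

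For multiplicity one when $c\neq 0$, each $\partial\Om^i_{x_i}$ is an oriented reduced boundary, so each sheet of the limit inherits an orientation from the approximants. If $V$ had density $k\geq 2$ along some sheet $\Si$, at least two touching approximating sheets would carry opposite outward normals; the weak first-variation identity for $\Ac$ on $V$ would then produce cancellation of the mean-curvature contributions along $\Si$ at the level of the limit, contradicting $c\neq 0$. Equivalently, the doubled sweepout can be replaced, via a cut-and-paste inside the narrow region between the opposite sheets, by a sweepout of strictly smaller $\Ac$-max, contradicting min-max minimality. I expect the hardest step to be the regularity theorem: because of the absence of scale invariance, the stability and almost-minimizing inequalities do not pass directly to the minimal blow-up limit, and one must bridge the blow-up (minimal) and positive-scale (CMC) pictures through a sheeting theorem for CMC varifolds, which is the main technical contribution beyond a direct transplantation of Pitts's scheme.
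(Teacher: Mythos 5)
Your four-stage scheme (pull-tight, almost-minimizing extraction, regularity, multiplicity one) does match the paper's overall architecture, and several of your observations are sound: the positive width via the isoperimetric profile, the dimension restriction coming from Schoen--Simon applied to minimal blowups, and the use of the one-sided Hopf maximum principle to obtain almost embeddedness rather than embeddedness. However, there are three concrete gaps.

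First, you repeatedly invoke ``$c$-stationarity'' of the limit $V$, but $\mathcal{A}^c$ is not a functional on varifolds (there is no enclosed volume), so there is no notion of $\mathcal{A}^c$-critical varifold to tighten toward. The paper replaces this with the strictly weaker property of $c$-bounded first variation, $|\delta V(X)|\le c\int|X|\,d\mu_V$, which is a closed condition on $\V_n(M)$ and is what the tightening map actually achieves. Your tightening argument would need to target that set, not a (nonexistent) set of $\mathcal{A}^c$-stationary varifolds, and the quantitative tightening inequality must be phrased in terms of Caccioppoli sets rather than varifolds.

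Second, the regularity route you propose --- Allard regularity plus a constancy argument near a flat tangent point, followed by bootstrap --- does not close. Allard's theorem requires density close to $1$, but a priori the min-max limit $V$ can have higher multiplicity; and the Constancy Theorem is a statement about stationary varifolds, which $V$ is not (it only has $c$-bounded first variation), so multiplicity cannot be pinned down this way. This is exactly the obstruction the paper singles out: in the minimal case one concludes by Constancy, and here one cannot. Instead the paper builds $c$-replacements by solving constrained $\mathcal{A}^c$-minimization problems, shows that any blowup of $V$ satisfies the Colding--De Lellis good-replacement property (the key point being that the mass defect between $V$ and its replacement is controlled by $c\cdot\vol(K)\sim r^{n+1}$, which is higher order than area $\sim r^n$ and so vanishes in the blowup), glues successive replacements across overlapping annuli --- including across the touching set --- and proves removability of the center singularity directly, followed by a moving-sphere argument to match densities. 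Your appeal to ``a sheeting theorem for CMC varifolds'' correctly identifies that something nontrivial is needed here, but the paper's mechanism is this scaling observation plus Pitts-style replacements, not a separate sheeting theorem.

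Third, your multiplicity-one mechanism is not correct as stated. You argue that density $\ge 2$ forces two sheets with opposite outward normals, whence the mean-curvature contributions to $\delta V$ cancel, contradicting $c\ne 0$. But two CMC sheets with opposite normals touching tangentially is precisely the configuration that \emph{does} occur at the touching set $\mathcal{S}(\Sigma)$, where the density of $V$ is $2$; your argument would erroneously exclude it. The source of multiplicity one in the paper is structural: the replacements $V^*$ arise as limits of boundaries $\partial\Omega^*_i$ of $\mathcal{A}^c$-minimizing Caccioppoli sets, for which the Constancy Theorem for currents and the one-sided maximum principle force density $1$ on the regular set and exactly $2$ on the touching set. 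This is then transferred to $V$ itself in the final step via the moving-sphere density comparison. Your alternative cut-and-paste sweepout argument is plausible in spirit but is not developed enough to substitute for this; in particular it needs to distinguish genuine higher multiplicity along a smooth embedded piece from the allowed two-sheeted touching.
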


Our proof broadly follows the Almgren-Pitts scheme, but with several important difficulties. This scheme proceeds generally as follows:

\begin{itemize}
\item Construct a sweepout with positive width, and extract a minimizing sequence;
\item Apply a `tightening' map to construct a new sequence whose varifold limit satisfies a variational property and an `almost-minimizing' property;
\item Use these properties to construct `replacements' on annuli which must be regular;
\item Apply successive concentric annular replacements to the min-max limit and show that they coincide with each other, and hence extend to the center;
\item Show that the min-max limit coincides with the replacement near the center.
\end{itemize}

We in fact show that $\bL^c$ is positive on any sweepout, as a consequence of the isoperimetric inequality for small volumes (see Theorems \ref{T:Isoperimetric areas} and \ref{T:existence of nontrivial sweepouts}). 

For the tightening, in the minimal ($c=0$) case one shows that for the tightened sequence, any min-max (varifold) limit must be stationary, that is, a weak solution in the sense of first variations. In the CMC setting it is an important, yet subtle, issue to determine the correct variational property to replace stationarity. For instance, the $\mathcal{A}^c$ functional is not well-defined on varifolds, so it is difficult to formulate a notion of weak solution for its critical points. To overcome this issue, we utilize the property of {\em $c$-bounded first variation}. This notion is a generalisation of bounded mean curvature, and is loose enough to be satisfied by our min-max limit $V$ (after tightening) whilst providing enough control to develop the regularity theory. In particular, varifolds with $c$-bounded first variation satisfy a uniform monotonicity formula, and any blowups are stationary. Furthermore, we formulate the property of being {\em $c$-almost minimizing}, which is inspired by the almost-minimizing property of Almgren-Pitts, with the area functional replaced by the $\mathcal{A}^c$ functional (see Definition \ref{D:c-am-varifolds}). 

To construct our so-called {\em $c$-replacements}, we solve a series of constrained minimization problems for the $\mathcal{A}^c$-functional in a subset $U\subset M$. Each $\mathcal{A}^c$-minimizer is an open set $\Omega_i^*$ with stable, regular CMC boundary in $U$, and the $c$-replacement $V^*$ is obtained as the varifold limit $\lim |\partial \Omega_i^*|$. At this point, $V^*$ gains regularity by classical curvature estimates, but in contrast to the minimal case, $V^*$ is merely almost embedded since we only have a \textit{one}-sided maximum principle for CMC hypersurfaces (see Lemma \ref{L:classical MP}). Nevertheless, the one-sided maximum principle implies that $V^*$ has multiplicity one in $U$, and this is the key ingredient to obtain the multiplicity-one-property for $V$.

Another new difficulty in the CMC setting is that the total mass of the replacement $V^*$ may differ from the total mass of the original varifold $V$. A key observation is that the mass defect is controlled by $c\vol(U)$, which is of higher order than the mass and hence converges to zero under any blowup process. Using this insight, we are able to prove that any blowup of the min-max limit $V$ has the good replacement property of Colding-De Lellis, and is therefore regular (see Lemma \ref{L:blowup is regular}). The same observation allows us to show that the tangent cones of $V$ are always planes. 

For the regularity of $V$, the proof structure is inspired by Pitts. Namely, we first apply successive replacements $V^*$ and $V^{**}$ on two overlapping concentric annuli $A_1$ and $A_2$, with the goal of showing that they match smoothly on the overlapping region and may thus be extended all the way to the center by taking further replacements. However, in the CMC case two main issues arise: the presence of a nontrivial touching set, and the need to show that the orientations match to give the same sign for the mean curvatures. These difficulties are overcome by keeping track of the approximate replacements $\{\Om_i^*\}$ and careful analysis of their convergence behaviors (see Section \ref{SS:Compactness of stable CMC hypersurfaces}). In particular, a key step is to show that the second replacement $V^{**}$ may be represented by a boundary in $A_1\cup A_2$, so that the orientations of $V^*, V^{**}$ match as desired. Near the touching set, we use the graphical decomposition into embedded sheets together with the gluing along regular part to properly match the sheets together.

Finally, to prove that the min-max limit $V$ coincides with the extended replacement $V^*$ near the center, we face one more obstacle since in the minimal case one typically appeals to the Constancy Theorem, which does not have an analog in the CMC setting. Instead, we first directly prove the removability of the center singularity for $V^*$, and then use a moving sphere argument to show that the densities of $V$ and $V^*$ are the same in the annular region.

\subsection{Outline of the paper}

We describe our notation and basic background material in Section 1. Then in Section 2 we gather some requisite results including compactness theorems and maximum principles, in particular for almost embedded CMC hypersurfaces. 

In Section 3 we formulate the technical setup for our min-max procedure, and prove the existence of nontrivial (positive width) sweepouts (Theorem \ref{T:existence of nontrivial sweepouts}). Then for any minimizing sequence we can extract a min-max sequence $\{\partial\Om^i_{x_i}\}$ that converges in the measure-theoretic sense to a nontrivial varifold $V$.

We then present the tightening process in Section 4. In particular, we show that there exists a nice minimizing sequence $\{\Om^i_x\}$, such that every min-max sequence converges to a varifold with $c$-bounded first variation (Proposition \ref{P:tightening}). To do this, using the set of varifolds with $c$-bounded first variation as the central set, we construct a discrete gradient flow for $\Ac$, namely the {\em tightening map} (see section \ref{SS: A map from A to the space of isotopies}). Applying this map to any given minimizing sequence will result in a nice minimizing sequence by a standard contradiction argument. To deal with the issue that $\mathcal{A}^c$ is not defined for varifolds, we derive a quantitative tightening inequality (\ref{E: decrease Ac by isotopy}) inspired by Colding-De Lellis.

In Section 5, we first show that $V$ may further be taken to be $c$-almost minimizing in the sense of Definition \ref{D:c-am-varifolds} (see Theorem \ref{T: existence of almost minimizing varifold}); that is, $V$ is the limit of boundaries of so-called $(\epsilon,\delta)$-$c$-almost minimizing sets $\Omega$. The remainder of Section 5 records important properties of $c$-almost minimizing varifolds, including the existence and regularity of $c$-replacements (see Proposition \ref{P:good-replacement-property} and Lemma \ref{L:reg-replacement}). Namely, for each $(\ep, \de)$-$c$-a.m. $\Om$, one can construct a $\Ac$-minimizer $\Om^*$ among certain admissible deformations, whose boundary is a stable embedded CMC hypersurface in $U$ by classical regularity theory. We then construct the $c$-replacement $V^*$ of $V$ as the varifold limit $\lim|\partial \Om^*|$. 

Finally, in Section 6 we prove the regularity of the min-max varifold $V$ (Theorem \ref{T:main-regularity}). Namely, given two $c$-replacements $V^*$ and $V^{**}$ of $V$ in two overlapping concentric annuli, we prove that they match in $C^1$ along the boundary sphere of the smaller annulus, in order to apply the unique continuation. Using the key observation that blowups are regular and the classical maximum principle, we first show that the blowups of $V^*, V^{**}$ are identical along this sphere. The $C^1$ gluing follows readily on the regular part, after which we require a much more careful analysis to extend the gluing across the touching set. We then continue the $c$-replacement $V^*$ smoothly as an almost embedded CMC hypersurface all the way to the center of the annuli and complete the proof after removing the center singularity.

\vspace{0.5em}
{\bf Acknowledgements}: Both authors are grateful to Prof. Shing-Tung Yau for suggesting this problem and for his generous support. X. Zhou would also like to thank Prof. Richard Schoen and Prof. Neshan Wickramasekera for valuable comments. J. Zhu would also like to thank Prof. William Minicozzi for his invaluable guidance and encouragement. X. Zhou is partially supported by NSF grant DMS-1704393. J. Zhu is partially supported by NSF grant DMS-1607871.

\section{Notation}
\label{S:Notation and background}

In this section, we collect some notions. We refer to \cite{Si83} and \cite[\S 2.1]{P81} for further materials in geometric measure theory.

Let $(M^{n+1}, g)$ denote a closed, oriented, smooth Riemannian manifold of dimension $3\leq (n+1)\leq 7$. Assume that $(M, g)$ is embedded in some $\R^L$, $L\in\N$. $B_r(p), \sB_r(p)$ denote respectively the Euclidean ball of $\R^L$ or the geodesic ball of $(M, g)$. We denote by $\mH^k$ the $k$-dimensional Hausdorff measure; $\bI_{k}(M)$ the space of $k$-dimensional integral currents in $\R^L$ with support in $M$; $\Z_{k}(M)$ the space of integral currents $T\in\bI_{k}(M)$ with $\partial T=0$; $\V_{k}(M)$ the closure, in the weak topology, of the space of $k$-dimensional rectifiable varifolds in $\R^L$ with support in $M$; $G_k(M)$ the Grassmannian bundle of un-oriented $k$-planes over $M$; $\F$ and $\M$ respectively the flat norm \cite[\S 31]{Si83} and mass norm \cite[26.4]{Si83} on $\bI_k(M)$; $\mF$ the varifold $\mF$-metric on $\V_k(M)$ and currents $\mF$-metric on $\bI_k(M)$, \cite[2.1(19)(20)]{P81}; $\C(M)$ or $\C(U)$ the space of sets $\Om\subset M$ or $\Om\subset U\subset M$ with finite perimeter (Caccioppoli sets), \cite[\S 14]{Si83}\cite[\S 1.6]{Gi}; and $\X(M)$ or $\X(U)$ the space of smooth vector fields in $M$ or supported in $U$.

We also utilize the following definitions:
\begin{enumerate}[label=\alph*), leftmargin=1cm]
\label{En: notations}
\item Given $T\in\bI_{k}(M)$, $|T|$ and $\|T\|$ denote respectively the integral varifold and Radon measure in $M$ associated with $T$;
\item Given $c>0$, a varifold $V\in \V_k(M)$ is said to have {\em $c$-bounded first variation in an open subset $U\subset M$}, if
\[ |\de V(X)|\leq c \int_M|X|d\mu_V, \quad \text{for any } X\in\X(U); \]
here the first variation of $V$ along $X$ is $\de V(X)=\int_{G_k(M)} div_S X(x)d V(x, S)$, \cite[\S 39]{Si83};
\item $U_r(V)$ denotes the ball in $\V_k(M)$ under $\mF$-metric with center $V\in\V_k(M)$ and radius $r>0$;
\item Given $p\in\spt\|V\|$, $\VarTan(V,p)$ denotes the space of tangent varifolds of $V$ at $p$, \cite[42.3]{Si83};
\item Given a smooth, immersed, closed, orientable hypersurface $\Si$ in $M$, or a set $\Om\in\C(M)$ with finite perimeter, $[[\Si]]$, $[[\Om]]$ denote the corresponding integral currents with the natural orientation, and $[\Si]$, $[\Om]$ denote the corresponding integer-multiplicity varifolds;
\item $\partial\Om$ denotes the (reduced)-boundary of $[[\Om]]$ as an integral current, and $\nu_{\partial\Om}$ denotes the outward pointing unit normal of $\partial \Om$, \cite[14.2]{Si83}.
\end{enumerate}

In this paper, we are interested in the following weighted area functional defined on $\C(M)$. Given $c>0$, define the {\em $\Ac$-functional} on $\C(M)$ as
\begin{equation}
\label{E: Ac}
\Ac(\Om)=\mH^n(\partial\Om)-c\mH^{n+1}(\Om).
\end{equation}
The {\em first variation formula} for $\Ac$ along $X\in \X(M)$ is (see \cite[16.2]{Si83})
\begin{equation}
\label{E: 1st variation for Ac}
\de\Ac|_{\Om}(X)=\int_{\partial\Om}div_{\partial \Om}X d\mu_{\partial\Om}-c\int_{\partial\Om}X\cdot \nu \, d\mu_{\partial\Om},
\end{equation}
where $\nu= \nu_{\partial \Om}$ is the outward unit normal on $\partial \Om$. 

When the boundary $\partial\Om=\Si$ is a smooth immersed hypersurface, we have \[div_{\Si}X=H\,  X\cdot \nu,\] where $H$ is the mean curvature of $\Si$ with respect to $\nu$; if $\Om$ is a critical point of $\Ac$, then (\ref{E: 1st variation for Ac}) directly implies that $\Si=\partial \Om$ has constant mean curvature $c$ with respect to the outward unit normal $\nu$. In this case, we can calculate the {\em second variation formula} for $\Ac$ along normal vector fields $X\in \X(M)$ such that $X=\varphi\nu $ along $\partial\Om=\Si$ where $\varphi\in C^\infty(\Si)$, \cite[Proposition 2.5]{BCE88},
\begin{equation}
\label{E: 2nd variation for Ac}
\de^2\Ac|_{\Om}(X,X) = \Rom{2}_\Si (\varphi,\varphi) =\int_{\Si}\left( |\nabla\varphi|^2-\left(Ric^M(\nu, \nu)+|A^\Si|^2\right)\varphi^2\right)d\mu_{\Si}.
\end{equation}
In the above formula, $\nabla\varphi$ is the gradient of $\varphi$ on $\Si$; $Ric^M$ is the Ricci curvature of $M$; $A^\Si$ is the second fundamental form of $\Si$.

\section{Preliminaries}
\label{S:Preliminaries}

In this section, we collect some preliminary results. First, we study the compactness properties of stable CMC hypersurfaces. In particular, we describe the structure of the touching sets which appear naturally when one takes the limit of embedded stable CMC hypersurfaces. We also present a maximum principle for varifolds with bounded first variation, a regularity result for boundaries that minimize the $\Ac$-functional, and a result on isoperimetric profile for small volumes. 

\subsection{Compactness of stable CMC hypersurfaces}
\label{SS:Compactness of stable CMC hypersurfaces}

\begin{definition}
\label{D:stable c-hypersurface}
Let $\Si$ be a smooth, immersed, two-sided hypersurface with unit normal vector $\nu$, and $U\subset M$ an open subset. We say that $\Si$ is a {\em stable $c$-hypersurface} in $U$ if 
\begin{itemize}
\item  {the mean curvature $H$ of $\Si\cap U$ with respect to $\nu$ equals to $c$; and}
\item $\Rom{2}_\Si(\varphi, \varphi)\geq 0$ { for all $\varphi\in C^\infty(\Si)$ with $\spt{\varphi}\subset \Si\cap U$,} 
where $\Rom{2}_\Si$ is as in (\ref{E: 2nd variation for Ac}).
\end{itemize}
\end{definition}

\begin{definition}
\label{def:comparisonsheets}
Let $\Sigma_i$, $i=1,2$, be connected embedded hypersurfaces in a connected open subset $U\subset M$, with $\partial\Sigma_i\cap U=\emptyset$ and unit normals $\nu_i$. We say that {\em $\Sigma_2$ lies on one side of $\Sigma_1$} if $\Sigma_1$ divides $U$ into two connected components $U_1 \cup U_2 = U\setminus \Sigma_1$, where $\nu_1$ points into $U_1$, and either:
\begin{itemize}
\item $\Sigma_2 \subset \Clos(U_1)$, which we write as $\Sigma_1\leq \Sigma_2$ or that $\Sigma_2$ lies on the positive side of $\Sigma_1$; or
\item $\Sigma_2 \subset \Clos(U_2)$, which we write as $\Sigma_1\geq \Sigma_2$ or that $\Sigma_2$ lies on the negative side of $\Sigma_1$.
\end{itemize}
 \end{definition}

\begin{definition}[Almost embedding]
\label{D:almost embedded boundary}
Let $U\subset M^{n+1}$ be an open subset, and $\Si^n$ be a smooth $n$-dimensional manifold. A smooth immersion $\phi: \Si\rightarrow U$ is said to be an {\em almost embedding} if at any point $p\in\phi(\Si)$ where $\Si$ fails to be embedded, there is a small neighborhood $W\subset U$ of $p$, such that 
\begin{itemize}
\item $\Si\cap \phi^{-1}(W)$ is a disjoint union of connected components $\cup_{i=1}^l \Si_i$;
\item $\phi(\Si_i)$ is an embedding for each $i=1,\cdots,l$; 
\item for each $i$, any other component $\phi(\Si_j)$, $j\neq i$, lies on one side of $\phi(\Si_i)$ in $W$.
\end{itemize}
We will simply denote $\phi(\Si)$ by $\Si$ and denote $\phi(\Si_i)$ by $\Si_i$. The subset of points in $\Si$ where $\Si$ fails to be embedded will be called the {\em touching set}, and denoted by $\mS(\Si)$. We will call $\Si\backslash\mS(\Si)$ the regular set, and denote it by $\mR(\Si)$.
\end{definition}
\begin{remark}
From the definition, the collection of components $\{\Si_i\}$ meet tangentially along $\mS(\Si)$.
\end{remark}

\begin{definition}[Almost embedded $c$-boundary]
\begin{enumerate}
\item An almost embedded hypersurface $\Si\subset U$ is said to be {\em a boundary} if there is an open subset $\Om\in\C(U)$, such that $\Si$ is equal to the boundary $\partial\Om$ (in $U$) in the sense of currents;

\item The {\em outer unit normal} $\nu_\Si$ of $\Si$ is the choice of the unit normal of $\Si$ which points outside of $\Om$ along the regular part $\mR(\Si)$; 

\item $\Si$ is called {\em a stable $c$-boundary} if $\Si$ is a boundary as well as a stable immersed $c$-hypersurface. 
\end{enumerate}
\end{definition}

We have the following variant of the famous Schoen-Simon-Yau (for $2\leq n\leq 5$) \cite{SSY75} and Schoen-Simon ($n=6$) \cite{SS81} curvature estimates.
\begin{theorem}[Curvature estimates for stable $c$-hypersurfaces]
\label{T:curvature estimates and compactness}
Let $2 \leq n \leq 6$, and $U\subset M$ be an open subset. If $\Si\subset U$ is a smooth, immersed (almost embedded when $n=6$), two-sided, stable $c$-hypersurface in $U$ with $\partial \Si\cap U=\emptyset$, and $\Area(\Si)\leq C$, then there exists $C_1$ depending only on $n, M, c, C$, such that
\[ |A^\Si|^2 (x) \leq \frac{C_1}{\dist^2_M(x,\partial U)} \quad \text{ for all $x \in \Si$}. \]
Moreover if $\Si_k\subset U$ is a sequence of smooth, immersed (almost embedded when $n=6$), two-sided, stable $c$-hypersurfaces in $U$ with $\partial \Si_k\cap U=\emptyset$ and $\sup_{k} \Area(\Sigma_k) < \infty$, 
then up to a subsequence, $\Sigma_k$ converges locally smoothly (possibly with multiplicity) to some stable $c$-hypersurface $\Sigma_\infty$ in $U$. 
\end{theorem}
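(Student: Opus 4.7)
The plan is to adapt the classical Schoen-Simon-Yau ($2\leq n\leq 5$) and Schoen-Simon ($n=6$) curvature estimates and compactness theory for stable minimal hypersurfaces to the CMC setting. The key observation is that (\ref{E: 2nd variation for Ac}) takes exactly the same form as the second variation of area for minimal hypersurfaces, so Definition \ref{D:stable c-hypersurface} immediately yields the usual stability inequality
\[
\int_\Sigma \bigl(|A^\Sigma|^2 + \Ric^M(\nu,\nu)\bigr)\varphi^2\, d\mu_\Sigma \;\leq\; \int_\Sigma |\nabla\varphi|^2\, d\mu_\Sigma
\]
for every $\varphi\in C^\infty_c(\Sigma\cap U)$. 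The only genuinely new input from CMC enters through the Simons identity, which for $H\equiv c$ acquires an extra term bounded pointwise by $2c|A|^3$. Since this is strictly of lower order than the dominant $|A|^4$, it is absorbed by Cauchy-Schwarz together with the area bound, and the Schoen-Simon-Yau $L^p$-iteration and the Schoen-Simon blow-up plus dimension reduction go through essentially verbatim, giving $|A^\Sigma|^2(x)\leq C_1\dist_M(x,\partial U)^{-2}$ with $C_1=C_1(n,M,c,C)$. For $2\le n\le 5$ the argument is local in nature, so it applies to any smooth immersion (use the argument on each locally embedded branch).

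For the almost embedded case in $n=6$, I would localize near each touching point. By Definition \ref{D:almost embedded boundary}, in a small neighborhood $W\subset U$ the immersion decomposes into finitely many embedded sheets $\{\Sigma_i\}$ lying pairwise on one side of one another, and the number of sheets is controlled by the area bound together with the monotonicity formula for varifolds with $c$-bounded first variation. Each individual sheet is itself an embedded stable $c$-hypersurface in $W$, since the stability inequality can be tested against functions supported on a single sheet, so Schoen-Simon applies sheet-by-sheet. Taking the maximum over the finite collection yields the pointwise bound for the whole almost embedded $\Sigma$.

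For the compactness statement, the uniform curvature and area bounds on compact subsets of $U$ yield equicontinuity of the unit normals, so that each $\Sigma_k$ may be written locally as a union of graphs of uniformly bounded $C^{1,\alpha}$ functions over fixed tangent planes, with a locally uniform bound on the number of sheets. Because the CMC equation $H=c$ is quasilinear elliptic in the graphical parametrization, Schauder estimates promote $C^{1,\alpha}$ convergence to smooth convergence on compact subsets; the limit $\Sigma_\infty$ is therefore smooth, two-sided and has constant mean curvature $c$, and stability is inherited by passing $k\to\infty$ in the stability inequality above.

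The main obstacle I anticipate is tracking the sheet structure in the limit: distinct embedded sheets of $\Sigma_k$ may coalesce as $k\to\infty$, and one must ensure that the resulting $\Sigma_\infty$ is a well-defined smooth immersed stable $c$-hypersurface, possibly with integer multiplicity. This is handled by combining the one-sided arrangement of the almost embedded sheets with the classical one-sided CMC maximum principle and unique continuation for the CMC equation: if two limit sheets touch at a common point with matching oriented tangent planes then they must coincide globally and are identified in $\Sigma_\infty$, while the remaining sheets give a well-defined finite integer multiplicity. This completes the locally smooth convergence to a stable $c$-hypersurface $\Sigma_\infty$ in $U$.
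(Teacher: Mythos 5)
Your proposal takes a genuinely different route from the paper for the curvature estimate, and for $2\leq n\leq 5$ it is valid, but the $n=6$ almost embedded case as you have written it has a real gap.

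For $2\leq n\leq 5$ the paper does not carry the constant-mean-curvature correction through the Schoen--Simon--Yau iteration; instead it argues by contradiction and point-picking, rescales so the mean curvature $c/\lambda \to 0$, and thereby reduces to the Bernstein theorems of \cite{SSY75} and \cite{SS81} for \emph{stable minimal} hypersurfaces. Your approach of running the SSY $L^p$-iteration directly with the extra $2c\operatorname{tr}(A^3)$ term in the CMC Simons identity, absorbing it by Young's inequality as a lower-order term, is a legitimate alternative; it is more computational but requires no passage to a blowup limit. Your observation that the stability form $\Rom{2}_\Si$ from (\ref{E: 2nd variation for Ac}) is identical to the minimal-surface stability form, and that the argument localizes on any locally embedded branch of an immersion, are both correct.

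The problem is the ``sheet-by-sheet'' treatment when $n=6$. You apply Schoen--Simon to each embedded sheet $\Si_i$ inside the neighborhood $W$ from Definition~\ref{D:almost embedded boundary}. But the radius of $W$ in that definition is not quantitatively controlled: the scale on which $\Si$ decomposes into ordered graphs is itself of order $|A|^{-1}$ near the touching point, which is exactly the quantity you are trying to bound. Applying Schoen--Simon to a sheet in $W$ only gives $|A|^2(x)\leq C/\dist^2(x,\partial W)$, which degenerates as $W$ shrinks, and does not improve to the desired $|A|^2(x)\leq C_1/\dist^2_M(x,\partial U)$. The argument is therefore circular at the touching set. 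The paper's proof avoids this by rescaling first: after the point-picking blowup the rescaled hypersurfaces have $|A|\leq 2$ on balls of fixed size, so the one-sided sheet structure persists at a definite scale, the limit is a complete stable minimal hypersurface inheriting the one-sided property, and the classical (two-sided) strong maximum principle for \emph{minimal} hypersurfaces forces any touching limit sheets to coincide, making the blowup genuinely embedded so that the Schoen--Simon Bernstein theorem applies. To repair your argument you would need to invoke the maximum principle at the blowup level rather than attempt a scale-uncontrolled sheet decomposition of the original $\Si$.

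The compactness statement you give — uniform curvature and area bounds on compact subsets give local $C^{1,\alpha}$ graphical representations, Schauder upgrades to smooth convergence, and stability and $H\equiv c$ pass to the limit — is standard and matches the paper's ``follows in the standard way from the curvature estimates.''
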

\begin{proof}
The compactness statement follows in the standard way from the curvature estimates. The curvature estimates follow from standard blowup arguments together with the Bernstein Theorem \cite[Theorem 2]{SSY75} and \cite[Theorem 3]{SS81}, the key being that the blowup will be a stable minimal hypersurface, and when $n=6$, the blowup of a sequence of almost embedded $c$-hypersurfaces will be embedded by the classical maximum principle for embedded minimal hypersurfaces (c.f. \cite{Colding-Minicozzi}).
\end{proof}

We need the following maximum principle.
\begin{lemma}[Maximum principle for embedded $c$-hypersurfaces]
\label{L:classical MP}
Given a connected open subset $U\subset M$, let $\Si_i\subset U$ be two connected embedded hypersurfaces with $\partial \Si_i\cap U=\emptyset$ for $i=1, 2$. Suppose that the mean curvature of each $\Si_i$ is a given constant $c>0$ with respect to the respective unit normal $\nu_i$. 
Assume that $\Sigma_2$ lies on one side of $\Sigma_1$. Then we have the following:
\begin{itemize}
\item[(i)] If there exists $p\in \Si_1\cap\Si_2$ such that $\nu_1(p)=\nu_2(p)$, then $\Si_1=\Si_2$;
\item[(ii)] Suppose $\Si_2$ lies on the negative side of $\Sigma_1$. Then if $\Sigma_1\cap\Sigma_2\neq \emptyset$, we must have $\nu_1(p)=\nu_2(p)$ for any $p\in \Si_1\cap\Si_2$, and hence $\Si_1=\Si_2$. In particular, either $\Si_1\cap\Si_2=\emptyset$ or $\Si_1=\Si_2$.
\end{itemize}
\end{lemma}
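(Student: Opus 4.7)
The plan is to reduce both parts to a local comparison of CMC graphs near a touching point $p$, and then apply either the Hopf strong maximum principle (for part (i)) or the second-order interior-extremum test (for part (ii)). A preliminary observation is that if $p\in\Sigma_1\cap\Sigma_2$ and $\Sigma_2$ lies on one side of $\Sigma_1$, then the two hypersurfaces must be tangent at $p$: otherwise $\Sigma_2$ would cross $\Sigma_1$ near $p$ and enter both $U_1$ and $U_2$. Consequently $T_p\Sigma_1=T_p\Sigma_2$ and $\nu_1(p)=\pm\nu_2(p)$. In coordinates adapted to $T_p\Sigma_1$ with $\nu_1(p)$ as the upward direction, I would write both hypersurfaces near $p$ as graphs of smooth functions $u_1,u_2$ with $\nabla u_1(p)=\nabla u_2(p)=0$.

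For (i), since $\nu_1(p)=\nu_2(p)$, both $u_i$ solve the same quasilinear CMC equation with the same upward orientation. Assuming without loss of generality $\Sigma_1\leq\Sigma_2$, the difference $w:=u_2-u_1$ is non-negative with $w(p)=0$. Subtracting the two equations and averaging the linearization along the segment $tu_2+(1-t)u_1$ yields $\mathcal{L}w=0$ for a linear, divergence-form, uniformly elliptic operator $\mathcal{L}$ with no zero-order term; the Hopf strong maximum principle then forces $w\equiv 0$ near $p$. To upgrade this to $\Sigma_1=\Sigma_2$ globally, I would run the standard open-closed argument on the connected manifold $\Sigma_1$: the set on which $\Sigma_1$ locally coincides with $\Sigma_2$ is open by the above, and closed by continuity of the normals combined with reapplying the local argument at a limit point.

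For (ii), if $\nu_1(p)=\nu_2(p)$ then (i) immediately yields $\Sigma_1=\Sigma_2$. Suppose instead $\nu_2(p)=-\nu_1(p)$; I claim this contradicts the hypothesis that $\Sigma_2$ lies on the negative side of $\Sigma_1$. In the paper's sign convention (verified from $\delta\Area(X)=\int H\,X\cdot\nu\,d\mu$ applied to a round sphere, which yields $H=n/r>0$ for the outward normal), the mean curvature of a graph with upward normal reduces at a critical point to $H=-\Delta u$. Since $\Sigma_1$ has mean curvature $c$ with respect to the upward normal, while $\Sigma_2$ has mean curvature $-c$ with respect to the upward normal (its given normal $\nu_2(p)$ being downward), one computes $\Delta u_1(p)=-c$ and $\Delta u_2(p)=c$, so $\Delta(u_1-u_2)(p)=-2c<0$. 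On the other hand, $\Sigma_1\geq\Sigma_2$ forces $u_1\geq u_2$, so $u_1-u_2\geq 0$ attains an interior minimum at $p$ with positive semidefinite Hessian, whence $\Delta(u_1-u_2)(p)\geq 0$---the desired contradiction. This leaves only $\nu_1(p)=\nu_2(p)$, to which (i) applies.

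The main care lies in tracking the paper's sign convention for $H$, as it is precisely this convention that produces the critical sign $-2c<0$ ruling out the antiparallel case in (ii). Beyond this, everything reduces to routine elliptic comparison together with a connectedness/unique continuation step on the connected hypersurface $\Sigma_1$.
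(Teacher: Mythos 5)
Your proof is correct and follows essentially the same route as the paper: write both hypersurfaces as graphs $u_1, u_2$ over the common tangent plane at a touching point, subtract, and apply a maximum-principle/second-derivative argument to the difference, with the key point that antiparallel normals force the inhomogeneous term $\pm 2c$ to have the sign that is impossible at an interior extremum of a one-signed difference. One small imprecision: in a general Riemannian ambient manifold the linearized operator $\mathcal{L}$ in part (i) will typically carry a zeroth-order term $q\,w$ coming from the ambient curvature (it is not curvature-free as in Euclidean space), but since $w$ has a sign the Hopf strong maximum principle still applies after replacing $q$ by $q - q^+ \leq 0$, so the conclusion is unchanged.
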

\begin{proof}
This follows directly from the classical maximum principle as follows.

Consider $p\in \Sigma_1\cap \Sigma_2$. Since $\Sigma_2$ lies on one side of $\Sigma_1$, the tangent planes must coincide at any point of their intersection. So without loss of generality we may assume that $U$ is a small ball around $p$ for which $\Sigma_1,\Sigma_2$ may be written as graphs $u_1,u_2$ in the $\nu_1$-direction over the tangent plane $T_p \Sigma_1 = T_p\Sigma_2$. 

Let $u=u_1-u_2$, then a standard computation shows that $u$ satisfies a linear elliptic equation of the form: $Lu=0$, if $\nu_2(p)=\nu_1(p)$; or $Lu=2c$ if $\nu_2(p)= -\nu_1(p)$. Here $L$ is a positive elliptic operator with smooth coefficients. Moreover,  if $\Sigma_2 \subset \Clos(U_1)$ then $u\leq 0$; if $\Sigma_2 \subset \Clos(U_2)$ then $u\geq 0$. Both items then follow from the maximum principle for nonpositive (or nonnegative) functions. 

\end{proof}

\begin{lemma}
Let $\Omega$ be a domain in $\mathbb{R}^m$ and suppose that $u$ is a classical solution on $\Omega$ of a linear inhomogenous elliptic PDE with smooth coefficients: 
\begin{equation}
Lu = a^{ij}D_{ij} u  + b^j D_j u + q u = f,  
\end{equation}
where $f$ has no zeroes on $\Omega$. Then the zero set $\{u=0\}$ is contained in a countable union of connected, embedded $(m-1)$-dimensional submanifolds. 
\end{lemma}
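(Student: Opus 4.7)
The plan is to stratify the zero set $Z = \{u=0\}$ according to the order of vanishing of $u$, and use the implicit function theorem together with the nonvanishing of $f$ to put each stratum inside a smooth hypersurface.

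First, I would split $Z = Z_1 \sqcup Z_2$ where $Z_1 = \{p \in Z : Du(p)\neq 0\}$ is the regular zero set and $Z_2 = \{p \in Z : Du(p)=0\}$ is the critical zero set. For $p \in Z_1$, the implicit function theorem gives a neighborhood $U_p \subset \Omega$ of $p$ on which $Z \cap U_p$ is a smoothly embedded $(m-1)$-dimensional submanifold.

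The key observation for handling $Z_2$ is that at any critical zero $p$, the PDE forces the Hessian of $u$ to be nonzero. Indeed, since $u(p)=0$ and $Du(p)=0$, the equation $Lu=f$ evaluated at $p$ reduces to
\begin{equation*}
a^{ij}(p)\, D_{ij}u(p) = f(p) \neq 0,
\end{equation*}
so the symmetric matrix $H := D^2 u(p)$ cannot vanish. Pick indices $k,l$ with $H_{kl}\neq 0$ and set $g := D_k u$. Then $g(p)=0$ and $D_l g(p) = H_{kl} \neq 0$, so by continuity there is a neighborhood $U_p$ of $p$ on which $Dg\neq 0$, and on which the implicit function theorem shows that $N_p := \{g=0\} \cap U_p$ is a smoothly embedded $(m-1)$-dimensional submanifold. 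Since every point $q \in Z_2 \cap U_p$ satisfies $Du(q)=0$, in particular $g(q)=D_k u(q)=0$, we obtain $Z_2 \cap U_p \subset N_p$, and in fact $Z \cap U_p \subset N_p \cup (Z_1 \cap U_p)$, both of which lie in countably many smooth hypersurfaces.

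Finally, since $\Omega$ is second countable (hence Lindelöf), we may extract a countable subcover $\{U_{p_i}\}$ of $Z$ from the cover constructed above. Taking the countably many connected components of each of the resulting smoothly embedded $(m-1)$-submanifolds produces the required countable family of connected embedded hypersurfaces whose union contains $Z$.

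The only nontrivial step is the one that uses the hypothesis $f\neq 0$, namely ruling out zeros of infinite (or third) order: a pointwise inspection of $Lu$ at a critical zero forces $D^2 u(p)\neq 0$, which is exactly what is needed to apply the implicit function theorem to the auxiliary function $D_k u$. The rest is bookkeeping with local charts and countable covers.
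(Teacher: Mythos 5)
Your proof is correct and follows essentially the same strategy as the paper: split the zero set into regular and critical parts, apply the implicit function theorem directly on the regular part, use the PDE at a critical zero to force $D^2u(p)\neq 0$ and then apply the implicit function theorem to a first partial derivative, and finish by a countability argument (the paper exhausts $\Omega$ by compact sets while you invoke the Lindelöf property, but these are interchangeable). One small remark: the paper invokes ellipticity to get rank $\geq 1$ for the Hessian, whereas you observe more simply that $D^2u(p)=0$ would force $a^{ij}(p)D_{ij}u(p)=0\neq f(p)$; your observation is cleaner and ellipticity is not actually needed at that step.
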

\begin{proof}
Let $K$ be a compact subset of $\Omega$. 

First, the implicit function theorem implies that the zero set is smooth away from the critical set. In particular, for any $\epsilon>0$ the compact set $\{u=0, |Du| \geq \epsilon\} \cap K$ is contained in the union of finitely many connected, smoothly embedded $(m-1)$-dimensional submanifolds. 

Now consider $x \in \{u=0, Du=0\}$. Then we have $a^{ij}(x) D_{ij} u(x) =f(x) \neq 0$, so by ellipticity, the Hessian $D^2 u$ must have rank at least 1. Thus for some $j$, the gradient $D(D_j u)\neq 0$, so again by the implicit function theorem there is an $r>0$ such that $B_r(x) \cap \{D_j u=0\}$ is an embedded $(m-1)$-dimensional submanifold, which clearly contains $B_r(x) \cap \{u=0\} \cap \{Du=0\}$. It follows that the compact set $\{u=0, Du=0\}\cap K$ is also contained in a finite union of connected, embedded $(m-1)$-dimensional submanifolds. 

Taking $\epsilon = 1/j\rightarrow 0$ and $K=K_j$, where $K_j$ is an exhaustion of $\Omega$, then completes the proof. 
\end{proof}

\begin{proposition}[Touching sets for almost embedded $c$-hypersurface]
\label{P:smooth touching set}
If the metric on $U^{n+1}$ is smooth, then for any almost embedded hypersurface $\Si^n \subset U$ of constant mean curvature $c$, the touching set $\mS(\Si)$ is contained in a countable union of connected, embedded $(n-1)$-dimensional submanifolds.

In particular, the regular set $\mR(\Sigma)$ is open and dense in $\Sigma$.
\end{proposition}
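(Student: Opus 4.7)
The plan is to reduce the statement to a local claim at each touching point and then invoke the preceding lemma on the zero sets of solutions to inhomogeneous linear elliptic PDEs.

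First, I would fix $p\in\mathcal{S}(\Sigma)$ and appeal to the almost-embedding structure in Definition~\ref{D:almost embedded boundary} to obtain a small neighborhood $W\subset U$ of $p$ on which $\Sigma\cap W$ decomposes into finitely many embedded sheets $\Sigma_1,\dots,\Sigma_l$, pairwise lying on one side of each other. Since any two sheets meeting at a touching point must have a common tangent plane (they lie to one side of each other), I can shrink $W$ if necessary so that each $\Sigma_i$ is written as a graph $u_i$ over the common tangent plane $T_p\Sigma$ with respect to a fixed unit normal direction. The touching set restricted to $W$ is then contained in the union of the pairwise coincidence sets $\{u_i = u_j\}$ as $(i,j)$ ranges over the finitely many pairs.

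Next, I would fix a pair $i\neq j$ and analyze $u := u_i - u_j$. The key input is Lemma~\ref{L:classical MP}(i): if the outward CMC normals of $\Sigma_i$ and $\Sigma_j$ agreed at any touching point, then $\Sigma_i=\Sigma_j$ in $W$, contradicting that they are distinct sheets. Hence at every touching point of the pair, the two normals are opposite, and the computation already recorded in the proof of Lemma~\ref{L:classical MP} shows that $u$ satisfies
\begin{equation*}
Lu = 2c
\end{equation*}
for a linear elliptic operator $L$ with smooth coefficients. Because $c>0$, the right-hand side is nowhere zero, so the previous lemma applies and $\{u=0\}$ is contained in a countable union of connected, embedded $(n-1)$-dimensional submanifolds of $T_p\Sigma$. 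Transporting this back via the graph, the contribution of the pair $(i,j)$ to $\mathcal{S}(\Sigma)\cap W$ has the claimed structure.

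Finally, I would use second countability of $\Sigma$ to cover $\mathcal{S}(\Sigma)$ by countably many such neighborhoods $W$ and take the union of the resulting submanifolds over all (finitely many per $W$) pairs $(i,j)$ and all chosen $W$, which remains a countable union. For the last sentence, the regular set $\mathcal{R}(\Sigma)$ is open by definition of the touching set, and is dense because $\mathcal{S}(\Sigma)$ is contained in a countable union of $(n-1)$-dimensional submanifolds and therefore has empty interior in the $n$-dimensional $\Sigma$ (by a Baire-category argument, or directly from Hausdorff dimension). The only delicate point I anticipate is verifying that the opposite-normal case is the one that always occurs at touching points, so that $Lu=2c$ rather than $Lu=0$; this is exactly what rules out an unstructured zero set and is supplied cleanly by Lemma~\ref{L:classical MP}(i).
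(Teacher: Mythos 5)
Your proposal is correct and matches the paper's argument: reduce locally to graphs over the common tangent plane, use Lemma~\ref{L:classical MP}(i) to force opposite normals so that the graph difference solves $Lu=2c$, and then invoke the preceding zero-set lemma. The only cosmetic difference is that the paper first uses the maximum principle to conclude there are exactly two distinct sheets near a touching point and analyzes a single difference, whereas you treat the finitely many pairs of sheets separately; both routes rest on the same computation and the same lemma.
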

\begin{proof}
Let $p\in \mathcal{S}(\Sigma)$. As in the proof of Lemma \ref{L:classical MP}, there is a small neighborhood $W$ of $p$ so that the image $\Sigma \cap W$ decomposes as graphs $\{u_i\}_{i=1}^k$, ordered by height, over the common tangent plane $T_p\Sigma$. In fact by the conclusions of that lemma, after possibly shrinking $W$ there will be exactly two distinct graphs $u_1 \leq u_2$, for which the difference $u=u_1-u_2$ satisfies $Lu=2c$. The zero set of $u$ corresponds to the touching set $\mathcal{S}(\Sigma)\cap W$.  The proposition then follows from the previous lemma.
\end{proof}

\begin{remark}
In the case that the metric on $M$ is real analytic, we have the stronger statement that the touching set is a finite union of real analytic subvarieties $\bigcup_{k=0}^{n-1} S^k$ of respective dimension $k$. This follows from  \cite[Theorem 5.2.3]{KP92}, since in this setting the operator $L$ will have analytic coefficients, and hence the solution $u$ is also real analytic.
\end{remark}

\begin{theorem}[Compactness theorem for almost embedded stable $c$-hypersurfaces]
\label{T:compactness}
Let $2\leq n\leq 6$. Suppose $\Si_k\subset U$ is a sequence of smooth, almost embedded, two-sided, stable $c_k$-hypersurfaces in $U$, with $\sup_{k} \Area(\Sigma_k) < \infty$ and $\sup_k c_k <\infty$. Then the following hold:
\begin{itemize}
\item[(i)] if $\inf c_k>0$, then up to a subsequence, $\{\Sigma_k\}$ converges locally smoothly (with multiplicity) to some almost embedded stable $c$-hypersurface $\Sigma_\infty$ in $U$;
\item[(ii)] if additionally $\{\Si_k\}$ are all boundaries, then $\Sigma_\infty$ is also a boundary, and the density of $\Sigma_\infty$ is $1$ along $\mR(\Si_\infty)$ and $2$ along $\mS(\Si_\infty)$;
\item[(iii)] if $c_k\to 0$, then up to a subsequence, $\{\Sigma_k\}$ converges locally smoothly (with multiplicity) to some smooth embedded stable minimal hypersurface $\Sigma_\infty$ in $U$.
\end{itemize}
\end{theorem}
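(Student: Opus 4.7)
The plan is to apply the curvature estimate of Theorem \ref{T:curvature estimates and compactness} to the embedded sheets of each $\Si_k$, extract smooth limits, and then classify the limit via Lemma \ref{L:classical MP} and Proposition \ref{P:smooth touching set}. Fix $p \in U$ and a small ball $B \Subset U$ around $p$. Since $\Si_k$ is almost embedded, within $B$ it decomposes into finitely many ordered embedded two-sided stable $c_k$-sheets $\{\Si_{k,j}\}_{j=1}^{m_k}$, with $m_k$ uniformly bounded by the area hypothesis. Theorem \ref{T:curvature estimates and compactness} applied to each sheet yields uniform second-fundamental-form bounds, which combine with $\sup_k c_k < \infty$ and elliptic Schauder estimates for the CMC equation to give uniform $C^k$ bounds on each $\Si_{k,j}$. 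Passing to a subsequence (with $c_k \to c$ and $m_k \equiv m$), each $\Si_{k,j}$ converges smoothly to a limit sheet $\Si_{\infty,j}$ of constant mean curvature $c$; stability passes to the limit by letting $k\to\infty$ in $\Rom{2}_{\Si_{k,j}}(\varphi,\varphi)\geq 0$ for fixed test functions $\varphi$, since the integrands converge smoothly.

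For part (i), where $c>0$: the ordering $\Si_{k,j}\leq \Si_{k,j+1}$ passes to the limit. By Lemma \ref{L:classical MP}(i), whenever two adjacent limit sheets meet with matching outward normals they must coincide, contributing higher integer multiplicity to the varifold limit. If they meet tangentially without coinciding, the normals at the contact point must be opposite — a configuration not forbidden by Lemma \ref{L:classical MP} since $\Si_{\infty,j+1}$ lies on the positive side of $\Si_{\infty,j}$ — and this realizes the almost-embedded structure, with Proposition \ref{P:smooth touching set} ensuring the touching set lies in a countable union of $(n-1)$-dimensional submanifolds. Collecting coalesced sheets with multiplicity, the union $\Si_\infty := \bigcup_j \Si_{\infty,j}$ is an almost embedded stable $c$-hypersurface.

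For part (ii), the uniform perimeter bound yields (on a further subsequence) $\Om_k \to \Om_\infty$ in $L^1$, so $\partial [[\Om_k]] \to \partial [[\Om_\infty]]$ as currents. On $\mR(\Si_\infty)$, the sheet is a smooth limit of embedded pieces of $\partial [[\Om_k]]$ with coherent outward normals, so it agrees with $\partial \Om_\infty$ and carries varifold density $1$; here the crucial point is that two consecutive sheets of the boundary $\Si_k = \partial \Om_k$ have opposite outward normals, so two sheets approaching the same limit must produce touching (not regular) behaviour. At a touching point of $\Si_\infty$, the two tangent limit sheets thus have opposite outward normals; the region between them has vanishing measure in the limit, so its membership in $\Om_k$ does not affect $\Om_\infty$, and the two oppositely-oriented sheets cancel as currents but add as varifolds, giving varifold density $2$. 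Finally, for part (iii), when $c_k \to 0$ the limit has mean curvature zero, so the inhomogeneity $\pm 2c$ in the proof of Lemma \ref{L:classical MP} vanishes and every touching configuration forces coincidence of sheets regardless of relative orientation, yielding embeddedness. The main obstacle I anticipate is the orientation bookkeeping at touching points in part (ii): identifying the touching contribution as varifold density $2$ and current density $0$ hinges on the fact that in the boundary case opposite outward normals at touching are inevitable, which is not automatic in the general almost-embedded setting of part (i).
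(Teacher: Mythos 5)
Your overall plan coincides with the paper's: curvature estimates (Theorem \ref{T:curvature estimates and compactness}) on the ordered embedded sheets, smooth extraction of limits, the maximum principle Lemma \ref{L:classical MP} for ordering, Proposition \ref{P:smooth touching set} for the structure of the touching set, and (implicitly) the Constancy Theorem for the boundary structure. Parts (i) and (iii) are essentially the paper's arguments.

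For part (ii), however, your central step is imprecise in a way that matters. Your claim that ``two consecutive sheets of the boundary $\Si_k=\partial\Om_k$ have opposite outward normals'' is in fact correct, but it is not automatic; it requires the Constancy Theorem: the ordered sheets $\Si_k^1< \cdots< \Si_k^{l}$ split a small ball $\sB_p$ into open pieces $U_0,\dots,U_l$, each $\Om_k\cap U_i$ is either $\emptyset$ or $U_i$, and writing $\partial(\Om_k\lc\sB_p)\lc\sB_p=\Si_k^1+\cdots+\Si_k^{l}$ (multiplicity one, oriented by the outward normal to $\Om_k$) then forces the outward normals to alternate. You should say this.

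More seriously, the conclusion ``so two sheets approaching the same limit must produce touching (not regular) behaviour'' does not close the argument. At $p\in\mR(\Si_\infty)$ the nearby limit is a \emph{single embedded} sheet with some integer multiplicity; two boundary sheets converging to that single sheet with opposite outward normals do not produce a touching point in the sense of the definition --- they would simply produce a regular point of multiplicity two, which is exactly the scenario you are supposed to rule out, not redescribe. The correct contradiction is either: (a) the limit sheet would then have mean curvature equal to $+c$ with respect to $\nu_\infty$ (from one approaching sheet) and $+c$ with respect to $-\nu_\infty$ (from the other), hence $c=-c$ and $c=0$, impossible since $c>0$; or equivalently, (b) the approaching sheets must have \emph{coherent} outward normals converging to $\nu_\infty$ (the unique normal for which $H=c>0$ on the limit $c$-hypersurface), and then the Constancy Theorem computation $a_0-a_1=1$, $a_1-a_2=1$ with $a_i\in\{0,1\}$ has no solution --- which is the paper's route. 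Your approach and the paper's are logically dual halves of the same contradiction (boundary structure forces alternating normals; the CMC condition $c>0$ forces coherent normals), but as written yours stops a step short by invoking ``touching'' rather than this sign obstruction. A small side remark: at an actual touching point $q\in\mS(\Si_\infty)$ the two tangent sheets are distinct near $q$, so the currents do not ``cancel''; the varifold density $2$ there comes from having exactly two distinct sheets each of density one, which Lemma \ref{L:classical MP} caps at two.
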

\begin{remark}
We learned that Bellettini-Wickramasekera \cite{Bellettini-Wickramasekera17} also have similar compactness results for stable CMC varifolds.
\end{remark}
\begin{proof}[Proof of Theorem \ref{T:compactness}]
Case (i) follows straightforwardly from Theorem \ref{T:curvature estimates and compactness}, the almost embedded assumption, together with the  maximum principle Lemma \ref{L:classical MP}. 

Now we prove Case (ii). Denote $\Si_k=\partial\Om_k$ for some $\Om_k\in \C(U)$. By standard compactness \cite[Theorem 6.3]{Si83}, a subsequence of $\partial\Om_k$ converges weakly as currents to some $\partial\Om_\infty$ with $\Om_\infty\in\C(U)$. We claim that $\partial\Om_\infty=\Si_\infty$ as varifold. To show this, we only need to check that the density of $\Si_\infty$ along $\mR(\Si_\infty)$ is one, and then by Lemma \ref{L:classical MP} and Proposition \ref{P:smooth touching set}, the density of $\Si_\infty$ along the touching set $\mS(\Si_\infty)$ is automatically two.

To show that the density along $\mR(\Si_\infty)$ is $1$, take an arbitrary point $p\in\mR(\Si_\infty)$. If the density at $p$ is larger than $1$, then by the locally smooth convergence of $\Si_k$ to $\Si_\infty$, there is a neighborhood $\sB_p\subset U$ of $p$, such that for $k$ large enough $\Si_k\cap \sB_p$ has a graphical decomposition as $\cup_{i=1}^{l_k}\Si_k^i$ with $l_k\geq 2$. Moreover, by Lemma \ref{L:classical MP} we have $\Si_k^1< \Si_k^2< \cdots < \Si_k^{l_k}$, and the outward unit normals $\nu_k^i$ of $\Si_k^i$ all point to the same direction. With out loss of generality, we may assume $l_k=2$ and omit the sub-index $k$. Then $\sB_p\backslash (\Si^1\cup\Si^2)$ has three connected components $U_0, U_1, U_2$ with, counting orientation, $(\partial U_0)\lc \sB_p=\Si^1$, $(\partial U_1)\lc \sB_p=\Si^2-\Si^1$, and $(\partial U_2)\lc \sB_p=-\Si^2$. 

On the other hand, for each $i$ the Constancy Theorem \cite[Theorem 26.27]{Si83} applied to $\Om_k\lc U_i$ implies that $\Om_k\lc U_i$ is identical to either $\emptyset$ or $U_i$. That is, $\Omega_k \lc \sB_p = \sum_{i=0}^2 a_i U_i$, where each $a_i =0,1$. It is then easy to see that any choice of the $a_i$ will contradict the fact that, counting orientation, $\partial(\Om_k\lc \sB_p) \lc \sB_p= \Sigma_k \cap \sB_p = \Si^1+\Si^2$.

Case (iii) follows directly from Theorem \ref{T:curvature estimates and compactness}, the almost embedded assumption, and the classical maximum principle for embedded minimal hypersurfaces (c.f. \cite{Colding-Minicozzi}).
\end{proof}

\subsection{Maximum principle for varifolds with $c$-bounded first variation}

We will need the following maximum principle which is essentially due to White \cite[Theorem 5]{White10}. 

\begin{proposition}[Maximum principle for varifolds with $c$-bounded first variation]
\label{P:maximum principle}
Suppose $V\in \V_n(M)$ has $c$-bounded first variation in a open subset $U\subset M$. Let $K\subset U$ be an open subset with compact closure in $U$, such that $\spt(\|V\|)\subset K$, and
\begin{itemize}
\item[(i)] $\partial K$ is smoothly embedded in $M$,
\item[(ii)] the mean curvature of $\partial K$ with respect to the outward pointing normal is greater than $c$.
\end{itemize}
Then $\spt(\|V\|)\cap \partial K=\emptyset$. 
\end{proposition}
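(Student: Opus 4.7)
The statement is a generalization of White's strong maximum principle for stationary varifolds \cite[Theorem 5]{White10} (the case $c=0$) to the $c$-bounded first variation setting; I interpret the hypothesis on the support as $\spt\|V\|\subset\overline{K}$. The plan is to adapt White's test-field argument.

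Assume for contradiction there is a point $p\in \spt\|V\|\cap\partial K$. In a tubular neighborhood $\mathcal{N}$ of $p$, let $d$ denote the signed distance to $\partial K$, negative inside $K$ and positive outside, set $\nu=\nabla d$ to extend the outward unit normal to $\mathcal{N}$, and write $\Sigma_t=\{d=t\}$ for the level sets. By the strict inequality $H_{\partial K}>c$ and continuity, I would fix constants $r,\eta>0$ so that each level set $\Sigma_t$, $|t|\leq r$, satisfies $H_t=\operatorname{div}_{\Sigma_t}\nu\geq c+\eta$ throughout $B_r(p)\cap \Sigma_t$.

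The next step is to test the hypothesis $|\delta V(X)|\leq c\int|X|\,d\|V\|$ against a vector field of the form $X=\zeta(x)\psi(d)\nu$, where $\zeta\geq 0$ is a smooth spatial cutoff supported in $B_R(p)$ for a radius $R$ to be chosen, and $\psi\geq 0$ is a smooth one-variable cutoff supported in $[-\tau,0]$ with $\psi(0)>0$. Using $\nabla_\nu\nu=0$ (a consequence of $|\nu|\equiv 1$) and the pointwise identity $\operatorname{div}_S\nu=H_{d(x)}(x)-\langle\nabla_v\nu,v\rangle$ for a unit $v\in S^\perp$ (with correction of order the ``tilt'' $|\pi_{S^\perp}\nu|^2$), the computation reduces to an estimate of the form
\[
\delta V(X)-c\int|X|\,d\|V\|\;\geq\; \eta\int \zeta\psi(d)\,d\|V\| - \text{(tilt error)} - \text{(boundary term)}.
\]

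The main obstacle is controlling the tilt error. My plan is to invoke the monotonicity formula for varifolds with $c$-bounded first variation---which differs from the stationary version only by an $O(r)$ correction---to obtain uniform density bounds near $p$ and to identify every tangent varifold of $V$ at $p$ as a stationary cone supported in the closed half-space $\{d\leq 0\}$. A barrier argument then forces these tangent cones to lie in the tangent hyperplane $T_p\partial K$, and the associated tilt-excess decay (in the spirit of Allard/Brakke) makes the tilt error of size $o\bigl(\int\zeta\psi(d)\,d\|V\|\bigr)$ as $\tau\to 0$. Fixing $R$ large enough (relative to $1/\eta$) to absorb the boundary term, and then taking $\tau$ small enough to absorb the tilt error, the $\eta$-gain yields a strict positive lower bound for $\delta V(X)-c\int|X|\,d\|V\|$, contradicting the hypothesis. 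Hence $\spt\|V\|\cap\partial K=\emptyset$.
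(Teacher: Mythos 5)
The paper does not actually prove this proposition; it is stated with the remark that it is ``essentially due to White \cite[Theorem 5]{White10},'' and the localized version needed later in Step~1 of the regularity argument is attributed to the remark after \cite[Theorem 2]{White10}. So your attempt to reconstruct the argument from scratch is already a departure from what the paper does.

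More to the point, your sketch has a genuine gap at its crux: the tilt term. You propose to control it by deducing ``tilt-excess decay (in the spirit of Allard/Brakke)'' from the fact that every tangent varifold of $V$ at $p$ is a multiple of $T_p\partial K$. But planarity of the tangent varifolds at the single point $p$ does not produce a usable quantitative tilt bound in the region where your test field lives: Allard's theorem requires a near-unit density hypothesis that is unavailable, and your $X=\zeta\psi(d)\nu$ is supported in a thin collar $\{-\tau<d\leq 0\}$ around all of $\partial K$, not in a shrinking ball about $p$; blowing up at $p$ says nothing about the tilt near other points of $\spt\|V\|\cap\partial K$ at scale $\tau$. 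Relatedly, your order of quantifiers (fix $R$, then let $\tau\to 0$) does not obviously let the $\eta$-gain absorb the boundary term $\int\psi(d)|\nabla\zeta|\,d\|V\|$, since both the main term and the boundary term degenerate at the same rate as $\tau\to0$ and the needed doubling-type comparison between them is left unjustified. In the standard treatment the tilt term is removed not by a regularity estimate but by choosing the profile $\psi$ so that $\psi'-\Lambda\psi\geq 0$ on $(-\tau,0]$ (with $\Lambda$ a bound on the second fundamental form of the level sets of $d$); then the pointwise inequality $\operatorname{div}_S\nu\geq H_{d(x)}-\Lambda|\pi_S\nu|^2$ makes the $|\pi_S\nu|^2$ contribution manifestly nonnegative, and the remaining comparison of the main and boundary terms is exactly where the careful work in \cite{White10} resides. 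I would either cite \cite[Theorem 5]{White10} as the paper does, or reproduce White's argument in full rather than by analogy.
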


\subsection{Regularity for boundaries which minimize the $\Ac$ functional}

The following result about regularity of boundaries which minimize the $\Ac$ functional can be found in \cite{Mo03}.

\begin{theorem}
\label{T:regularity of Ac minimizers}
Given $\Om\in\C(M)$, $p\in\spt\|\partial\Om\|$, and some small $r>0$, suppose that $\Om\lc \sB_r(p)$ minimizes the $\Ac$-functional: that is, for any other $\La\in\C(M)$ with $\spt\|\La-\Om\|\subset \sB_r(p)$, we have $\Ac(\La)\geq \Ac(\Om)$. Then except for a set of Hausdorff dimension at most $n-7$, $\partial\Om\lc \sB_r(p)$ is a smooth and embedded hypersurface, and is real analytic if the ambient metric on $M$ is real analytic.
\end{theorem}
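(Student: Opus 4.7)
The plan is to reduce the claim to the classical regularity theory for almost minimizers of the perimeter functional, exploiting the fact that the volume term in $\Ac$ is strictly lower order than area on small scales. Concretely, for any competitor $\La\in\C(M)$ with $\spt\|\La-\Om\|\subset \sB_s(q)\subset \sB_r(p)$, the minimality hypothesis gives
\[\mH^n(\partial\Om\cap \sB_s(q)) \;\leq\; \mH^n(\partial\La\cap \sB_s(q)) + c\,\mH^{n+1}(\sB_s(q)) \;\leq\; \mH^n(\partial\La\cap \sB_s(q)) + C s^{n+1}.\]
Since the natural perimeter scale on $\sB_s(q)$ is $s^n$, the deficit $C s^{n+1}$ is strictly lower order. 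Hence $\Om$ qualifies as an almost perimeter minimizer in the sense of Tamanini, equivalently an $(\mathbf{M},\varepsilon,\delta)$-minimal set in the sense of Almgren with $\varepsilon(s) = C s$.

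Next I would invoke the classical regularity theorem for such almost minimizers. Tamanini's theorem (see also the exposition in [Mo03]) yields that $\partial\Om\lc\sB_r(p)$ is a $C^{1,\alpha}$-hypersurface away from a closed singular set $S$ of Hausdorff dimension at most $n-7$. The dimension bound is established by Federer's dimension reduction: any tangent cone at a singular point is an honest perimeter-minimizing cone in $\R^{n+1}$, because the lower-order deficit $C s^{n+1}$ scales away under blowups. The classical Simons--Bombieri--De Giorgi--Giusti non-existence results for nonplanar stable minimizing hypercones then produce the claimed bound via standard dimension reduction.

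Finally I would bootstrap the regularity on $\partial\Om\setminus S$. There, the first variation formula \eqref{E: 1st variation for Ac} together with $\Ac$-minimality implies that the weak mean curvature of $\partial\Om$ equals $c$; by the $C^{1,\alpha}$ regularity this is the classical quasilinear elliptic equation $H=c$, whose coefficients are smooth (respectively real analytic) functions of the metric. Standard Schauder bootstrapping then yields $C^\infty$ regularity, and in the real analytic case the Morrey--Nirenberg analytic regularity theorem for nonlinear elliptic equations with real analytic coefficients promotes $C^\infty$ to real analyticity. The main obstacle is invoking Tamanini's theorem in the Riemannian rather than Euclidean setting, but this passes routinely via local charts since on small enough balls the metric is uniformly close to Euclidean and the lower-order deficit estimate is stable under such perturbation.
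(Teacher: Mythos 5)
Your proof is correct and follows essentially the same route as the paper: both observe that $\Ac$-minimality immediately yields an almost-perimeter-minimizing condition with a lower-order deficit (a $(\Lambda,r_0)$-minimizer in the terminology of Almgren/Tamanini), and then invoke the classical regularity theory for such almost minimizers. The paper simply cites Morgan's Riemannian version of this theory ([Mo03], Corollaries 3.7 and 3.8), which already packages up the $C^{1,\alpha}$ $\varepsilon$-regularity, the dimension reduction bounding the singular set, the Euclidean-to-Riemannian transfer, and the Schauder/analytic bootstrap that you work through explicitly.
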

\begin{proof}
Since $\Om\lc \sB_r(p)$ minimizes the $\Ac$-functional, for all $\La\in\C(M)$ as in the supposition, we have
\[ \mH^n(\partial\La)-\mH^n(\partial\Om)\geq -c|\mH^{n+1}(\La)-\mH^{n+1}(\Om)|. \]
This is precisely condition \cite[3.1(1)]{Mo03}. The regularity then follows from \cite[Corollary 3.7, 3.8]{Mo03}.
\end{proof}

\subsection{Isoperimetric profiles for small volume}

We have the following lower bound for the isoperimetric profiles for small volumes, which is a consequence of the fact that the isoperimetric profile is asymptotically Euclidean for small volumes \cite{BM82} (see also \cite[Theorem 3]{Nardulli09}). Note that although it was only stated for domains with smooth boundary, the result indeed holds for any $\Om\in\C(M)$ by using the regularity theory for isoperimetric domains (c.f. Theorem \ref{T:regularity of Ac minimizers}).
\begin{theorem}
\label{T:Isoperimetric areas}
There exists constants $C_0>0$ and $V_0>0$ depending only on $M$ such that 
\[ \Area(\partial\Om)\geq C_0 \vol(\Om)^{\frac{n}{n+1}}, \text{ whenever $\Om\in\C(M)$ and $\vol(\Om)\leq V_0$.}\]
\end{theorem}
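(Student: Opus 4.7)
My plan is to reduce to the Euclidean isoperimetric inequality by localizing minimizers of the isoperimetric problem to small geodesic balls, on which the metric is nearly flat. First, for each $v \in [0, \vol(M)]$, the isoperimetric profile
\[ I(v) := \inf\{\mathcal{H}^n(\partial^* \Omega) : \Omega \in \C(M),\ \vol(\Omega) = v\} \]
is realized by some $\Omega_v \in \C(M)$: this follows from standard GMT compactness together with lower semicontinuity of perimeter. Moreover, Theorem \ref{T:regularity of Ac minimizers} applied with $c = H(v)$ (the Lagrange multiplier / constant mean curvature of $\Omega_v$) ensures $\partial\Omega_v$ is smooth outside a closed set of Hausdorff dimension at most $n-7$. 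It therefore suffices to prove $I(v) \geq C_0 v^{n/(n+1)}$ for all sufficiently small $v$.

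Second, I would show that there exist $V_0, r_0 > 0$ (depending only on $M$) such that every isoperimetric minimizer $\Omega_v$ with $v \leq V_0$ is contained in some geodesic ball $\sB_{r_0}(p)$. Heuristically, if $\Omega_v$ were spread out, applying the coarea formula to distance functions from well-separated points would produce a slicing sphere $\partial \sB_\rho(p)$ with small $n$-measure, splitting $\Omega_v$ into two pieces of comparable positive volume; by the strict concavity of $t \mapsto t^{n/(n+1)}$ together with an inductive hypothesis at yet smaller volumes, this would contradict minimality. Equivalently one may invoke the Bérard--Meyer theorem \cite{BM82}, which shows that rescaled isoperimetric minimizers of small volume converge to Euclidean unit balls and hence have diameters tending to zero.

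Finally, once $\Omega_v \subset \sB_{r_0}(p)$, I would use normal coordinates centered at $p$, in which $g_{ij}(x) = \delta_{ij} + O(|x|^2)$. This yields a $(1+\eta(r_0))$-bi-Lipschitz identification between $\sB_{r_0}(p)$ and a Euclidean ball, with $\eta(r_0) \to 0$ as $r_0 \to 0$; pushing $\Omega_v$ forward through the exponential map and applying the classical Euclidean isoperimetric inequality gives
\[ \mathcal{H}^n_g(\partial^* \Omega_v) \geq (1 - \eta(r_0))\, c_{n+1}\, \vol_g(\Omega_v)^{n/(n+1)}, \]
where $c_{n+1}$ is the sharp Euclidean isoperimetric constant. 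Choosing $r_0$ small enough that the prefactor exceeds $c_{n+1}/2$, and shrinking $V_0$ correspondingly, completes the proof with $C_0 = c_{n+1}/2$. The main obstacle is the localization in the second step, which lies essentially at the depth of the Bérard--Meyer asymptotic result; the other two steps are standard consequences of GMT and of the nearly-Euclidean structure of $M$ at small scales.
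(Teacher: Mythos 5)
Your proposal is correct and relies on exactly the same ingredient as the paper: the Bérard--Meyer theorem \cite{BM82} that small-volume isoperimetric minimizers are nearly Euclidean. The paper's proof is a two-line citation: it invokes \cite{BM82} (and \cite[Theorem 3]{Nardulli09}) for the asymptotic isoperimetric profile, and then remarks that the extension from smooth domains to arbitrary Caccioppoli sets follows from the regularity theory (Theorem \ref{T:regularity of Ac minimizers}). You instead sketch how the Bérard--Meyer statement itself is proved (existence and regularity of minimizers, localization of small-volume minimizers into small geodesic balls, comparison with the Euclidean inequality in normal coordinates), which is more informative but strictly contained in the cited reference; you candidly note that your localization step ``lies essentially at the depth of the Bérard--Meyer asymptotic result.'' One small shift in emphasis: the paper uses Theorem \ref{T:regularity of Ac minimizers} specifically to extend the statement from smooth boundaries to all of $\C(M)$, whereas you use it to establish smoothness of the minimizers $\Omega_v$; both uses are valid, but the Caccioppoli-set extension is the point the paper is flagging, since \cite{BM82} is stated for smooth domains while the theorem here quantifies over all $\Om\in\C(M)$.
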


\section{The $c$-Min-max construction}
\label{S:The c-Min-max construction}

In this section, we present the setups of the min-max construction mainly followed Pitts \cite{P81}. We also prove the existence of a non-trivial sweepout with positive $\Ac$-min-max value.

\subsection{Homotopy sequences.}\label{homotopy sequences}

We will introduce the min-max construction using the scheme developed by Almgren and Pitts \cite{AF62, AF65, P81}.

\begin{definition}\label{cell complex} 
(cell complex.)
\begin{enumerate}
\item Denote $I=[0, 1]$, $I_0=\partial I=I\backslash (0, 1)$;

\item For $j\in\N$, $I(1, j)$ is the cell complex of $I$, whose $1$-cells are all intervals of form $[\frac{i}{3^{j}}, \frac{i+1}{3^{j}}]$, and $0$-cells are all points $[\frac{i}{3^{j}}]$; 

\item For $p=0 ,1$, $\al\in I(1, j)$ is a $p$-cell if $dim(\al)=p$. $0$-cell is also called a vertex;

\item $I(1, j)_p$ denotes the set of all $p$-cells in $I(1, j)$, and $I_0(1, j)_0$ denotes the set $\{[0], [1]\}$;

\item Given a $1$-cell $\al\in I(1, j)_1$, and $k\in\N$, $\al(k)$ denotes the $1$-dimensional sub-complex of $I(1, j+k)$ formed by all cells contained in $\al$. For $q=0, 1$, $\al(k)_q$ and $\al_0(k)_q$ denote respectively the set of all $q$-cells of $I(1, j+k)$ contained in $\al$, or in the boundary of $\al$;

\item The boundary homeomorphism $\partial: I(1, j)\rightarrow I(1, j)$ is given by $\partial[a, b]=[b]-[a]$ if $[a, b]\in I(1, j)_1$, and $\partial[a]=0$ if $[a]\in I(1, j)_0$;

\item The distance function $\md: I(1, j)_0\times I(1, j)_0\rightarrow\N$ is defined as $\md(x, y)=3^{j}|x-y|$;

\item The map $\n(i, j): I(1, i)_{0}\to I(1, j)_{0}$ is defined as: $\n(i, j)(x)\in I(1, j)_{0}$ is the unique element of $I(1, j)_0$, such that $\md\big(x, \n(i, j)(x)\big)=\inf\big\{\md(x, y): y\in I(1, j)_{0}\big\}$.
\end{enumerate}
\end{definition}

Consider a map to the space of Caccioppoli sets: $\phi: I(1, j)_{0}\rightarrow\C(M)$. The \emph{fineness} of $\phi$ is defined as:
\begin{equation}\label{fineness}
\mf(\phi)=\sup\Big\{\frac{\M\big(\partial\phi(x)-\partial\phi(y)\big)}{\md(x, y)}:\ x, y\in I(1, j)_{0}, x\neq y\Big\}.
\end{equation}
Similarly we can define the fineness of $\phi$ with respect to the $\F$-norm and $\mF$-metric. We use $\phi: I(1, j)_{0}\rightarrow\big(\C(M), \{0\}\big)$ to denote a map such that $\phi\big(I(1, j)_{0}\big)\subset\C(M)$ and $\partial\phi|_{I_{0}(1, j)_{0}}=0$, i.e. $\phi([0]), \phi([1])=\emptyset$ or $M$.

\begin{definition}\label{homotopy for maps}
Given $\de>0$ and $\phi_{i}: I(1, k_{i})_{0}\rightarrow\big(\C(M), \{0\}\big)$, $i=0,1$, we say \emph{$\phi_{1}$ is $1$-homotopic to $\phi_{2}$ in $\big(\C(M), \{0\}\big)$ with fineness $\de$}, if $\exists\ k_{3}\in\N$, $k_{3}\geq\max\{k_{1}, k_{2}\}$, and
$$\psi: I(1, k_{3})_{0}\times I(1, k_{3})_{0}\rightarrow \C(M),$$
such that
\begin{itemize}
\setlength{\itemindent}{1em}
\item $\mf(\psi)\leq \de$;
\item $\psi([i], x)=\phi_{i}\big(\n(k_{3}, k_{i})(x)\big)$, $i=0,1$;
\item $\partial\psi\big(I(1, k_{3})_{0}\times I_{0}(1, k_{3})_{0}\big)=0$.
\end{itemize}
\end{definition}

\begin{definition}\label{(1, M) homotopy sequence}
A \emph{$(1, \M)$-homotopy sequence of mappings into $\big(\C(M), \{0\}\big)$} is a sequence of mappings $\{\phi_{i}\}_{i\in\N}$,
$$\phi_{i}: I(1, k_{i})_{0}\rightarrow\big(\C(M), \{0\}\big),$$
such that $\phi_{i}$ is $1$-homotopic to $\phi_{i+1}$ in $\big(\C(M), \{0\}\big)$ with fineness $\de_{i}$, and
\begin{itemize}
\setlength{\itemindent}{1em}
\item $\lim_{i\rightarrow\infty}\de_{i}=0$;
\item $\sup_{i}\big\{\M(\partial\phi_{i}(x)):\ x\in I(1, k_{i})_{0}\big\}<+\infty$.
\end{itemize}
\end{definition}
\begin{remark}
Note that the second condition implies that $\sup_{i}\big\{\Ac(\phi_{i}(x)):\ x\in I(1, k_{i})_{0}\big\}<+\infty$.
\end{remark}

\begin{definition}\label{homotopy for sequences}
Given two $(1, \M)$-homotopy sequences of mappings $S_{1}=\{\phi^{1}_{i}\}_{i\in\N}$ and $S_{2}=\{\phi^{2}_{i}\}_{i\in\N}$ into $\big(\C(M), \{0\}\big)$, \emph{$S_{1}$ is homotopic to $S_{2}$} if $\exists\ \{\de_{i}\}_{i\in\N}$, such that
\begin{itemize}
\setlength{\itemindent}{1em}
\item $\phi^{1}_{i}$ is $1$-homotopic to $\phi^{2}_{i}$ in $\big(\C(M), \{0\}\big)$ with fineness $\de_{i}$;
\item $\lim_{i\rightarrow \infty}\de_{i}=0$.
\end{itemize}
\end{definition}

It is easy to see that the relation ``is homotopic to" is an equivalence relation on the space of $(1, \M)$-homotopy sequences of mappings into $\big(\C(M), \{0\}\big)$. An equivalence class is a \emph{$(1, \M)$-homotopy class of mappings into $\big(\C(M), \{0\}\big)$}. Denote the set of all equivalence classes by $\pi^{\#}_{1}\big(\C(M, \M), \{0\}\big)$.

\subsection{Min-max construction.}

\begin{definition}
(Min-max definition) Given $\Pi\in\pi^{\#}_{1}\big(\C(M, \M), \{0\}\big)$, define: $\bL^c: \Pi\rightarrow\R^{+}$ as a function given by:
\[ \bL^c(S)=\bL^c(\{\phi_{i}\}_{i\in\N})=\limsup_{i\rightarrow\infty}\max\big\{\Ac\big(\phi_{i}(x)\big):\ x \textrm{ lies in the domain of $\phi_{i}$}\big\}. \]
The \emph{$\mathcal{A}^c$-min-max value of $\Pi$} is defined as
\begin{equation}\label{width}
\bL^c(\Pi)=\inf\{\bL^c(S):\ S\in\Pi\}.
\end{equation}

A sequence $S=\{\phi_i\}\in\Pi$ is called a \emph{critical sequence} if $\bL^c(S)=\bL^c(\Pi)$. 

Given a critical sequence $S$, then $K(S)=\{V=\lim_{j\to \infty}|\partial \phi_{i_{j}}(x_{j})|:\ \textrm{$x_{j}$ lies in the domain of $\phi_{i_{j}}$}\}$ is a compact subsets of $\V_n(M^{n+1})$. 
The \emph{critical set} of $S$ is the subset $C(S)\subset K(S)$ defined by 
\[ C(S)=\{V=\lim_{j\rightarrow\infty}|\partial\phi_{i_j}(x_j)|:\ \text{with} \lim_{j\to\infty} \Ac(\phi_{i_j}(x_j))=\bL^c(S)\}. \]
\end{definition}


Note that by \cite[4.1(4)]{P81}, we immediately have:
\begin{lemma}
\label{L:critical sequence}
Given any $\Pi\in\pi^{\#}_{1}\big(\C(M, \M), \{0\}\big)$, there exists a critical sequence $S\in \Pi$.
\end{lemma}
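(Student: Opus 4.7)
The plan is to use a diagonal extraction argument in the style of Pitts. By definition of the infimum $\bL^c(\Pi)$, one can choose a sequence $\{S^k\}_{k \in \N}\subset \Pi$ of $(1,\M)$-homotopy sequences with $\bL^c(S^k) \to \bL^c(\Pi)$, where $S^k = \{\phi^k_i\}_{i\in\N}$. For each $k$, the internal fineness $\delta^k_i$ of the $1$-homotopy between $\phi^k_i$ and $\phi^k_{i+1}$ satisfies $\delta^k_i\to 0$ as $i\to\infty$ by Definition \ref{(1, M) homotopy sequence}; and since $S^k$ is homotopic to $S^{k+1}$ in $\Pi$, Definition \ref{homotopy for sequences} supplies cross-level $1$-homotopies between $\phi^k_i$ and $\phi^{k+1}_i$ whose fineness $\eta^{k,k+1}_i$ also tends to zero as $i\to\infty$.

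For each $k$, I would then pick an index $i_k$, strictly increasing in $k$, large enough that $\delta^k_j \leq 1/k$ and $\eta^{k,k+1}_j \leq 1/k$ for all $j \geq i_k$, and such that $\max_x \Ac(\phi^k_{i_k}(x)) \leq \bL^c(S^k) + 1/k$ (possible because by definition $\bL^c(S^k) = \limsup_{i\to\infty}\max_x \Ac(\phi^k_i(x))$, so infinitely many indices realize this bound up to $1/k$). The candidate diagonal sequence is $S := \{\phi^k_{i_k}\}_{k\in\N}$. To see that $S\in\Pi$, I would build a $1$-homotopy between the consecutive terms $\phi^k_{i_k}$ and $\phi^{k+1}_{i_{k+1}}$ by concatenating the cross-level homotopy $\psi^{k,k+1}_{i_k}$ from $\phi^k_{i_k}$ to $\phi^{k+1}_{i_k}$ with the finitely many internal homotopies from $\phi^{k+1}_j$ to $\phi^{k+1}_{j+1}$ for $i_k \leq j < i_{k+1}$ inside $S^{k+1}$. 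After a standard refinement so that all constituent homotopies live on a common cell complex $I(1,\cdot)$, each piece has fineness at most $1/k$, hence so does the concatenation. In particular the fineness of these $1$-homotopies for $S$ tends to zero as $k\to\infty$, while the uniform mass bound in Definition \ref{(1, M) homotopy sequence} is inherited from the $\{S^k\}$.

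Having $S\in\Pi$, the inequality $\bL^c(S)\geq \bL^c(\Pi)$ is automatic from the definition of the infimum. For the reverse inequality,
\[
\bL^c(S) = \limsup_{k\to\infty} \max_x \Ac(\phi^k_{i_k}(x)) \leq \limsup_{k\to\infty} \big(\bL^c(S^k) + 1/k\big) = \bL^c(\Pi),
\]
so $\bL^c(S) = \bL^c(\Pi)$ and $S$ is critical. The main obstacle is essentially bookkeeping: one must verify that concatenation of finitely many discrete $1$-homotopies, whose underlying cell complexes $I(1,\cdot)$ may have different scales, can be reparametrized onto a common cell complex without increasing the fineness. This is precisely the content of \cite[4.1(4)]{P81}, and it transfers verbatim to our setting because the definitions of fineness and $1$-homotopy in Section \ref{homotopy sequences} depend only on the mass norm of $\partial\phi$ and not on the functional $\Ac$ used to measure the critical level.
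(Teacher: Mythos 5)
Your argument is essentially the Pitts diagonal argument that the paper invokes directly by citing \cite[4.1(4)]{P81}; the key observation you make at the end --- that the notions of fineness and $1$-homotopy depend only on the mass of $\partial\phi$ and not on $\Ac$, so Pitts's argument transfers verbatim --- is precisely why the paper can get away with a bare citation. One minor bookkeeping slip: as written you control $\delta^k_j$ for $j\geq i_k$, but your concatenation traverses the internal homotopies of $S^{k+1}$, whose finenesses are $\delta^{k+1}_j$ for $j\in[i_k,i_{k+1})$, a range your stated choice of $i_k$ does not directly bound. The simplest fix is to concatenate in the opposite order (internal homotopies of $S^k$ from $i_k$ to $i_{k+1}$, then the cross-level homotopy at index $i_{k+1}$), so that the controls you already impose match up exactly; with that adjustment the proof is correct and follows the same route as the paper.
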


The main theorem of this paper is as follows:
\begin{theorem}
\label{thm:main-body}
Let $2\leq n\leq 6$. Given a smooth closed Riemannian manifold $M^{n+1}$ and $c>0$, there exists $\Pi\in\pi^{\#}_{1}\big(\C(M, \M), \{0\}\big)$ and a critical sequence $S\in \Pi$ such that:
\begin{itemize}
\item $\bL^c(\Pi)=\bL^c(S)>0$;
\item There exists an element of $C(S)$ induced by a nontrivial, smooth, almost embedded, closed hypersurface $\Sigma^n \subset M$ of constant mean curvature $c$ with multiplicity one.
\end{itemize}
\end{theorem}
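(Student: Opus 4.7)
The plan is to follow the Almgren--Pitts scheme outlined in the introduction, adapted to the $\Ac$ functional. I begin by constructing an explicit sweepout $\{\Om_x^0\}_{x\in I}$ from $\emptyset$ to $M$ (for instance, sublevel sets of a Morse function on $M$, suitably discretized) and let $\Pi\in\pi^{\#}_{1}(\C(M,\M),\{0\})$ be the induced $(1,\M)$-homotopy class; Lemma \ref{L:critical sequence} produces a critical sequence $S\in\Pi$. To verify $\bL^c(\Pi)>0$ I would exploit continuity of $\vol$: along any family $\{\Om_x\}\in\Pi$ the function $\vol(\Om_x)$ interpolates between $0$ and $\vol(M)$, so for each small $v_*<V_0$ some parameter $x_*$ satisfies $\vol(\Om_{x_*})=v_*$. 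Theorem \ref{T:Isoperimetric areas} then yields $\Ac(\Om_{x_*})\geq C_0\,v_*^{n/(n+1)}-c\,v_*$, and for $v_*$ sufficiently small this is a uniform positive lower bound on $\bL^c(\Pi)$.

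Next, I would apply the tightening procedure to replace $S$ by a critical sequence (also denoted $S$) such that every element of the critical set $C(S)$ has $c$-bounded first variation. The construction is a discrete $\Ac$-gradient flow in the spirit of Colding--De Lellis, built from a quantitative $\Ac$-decrease inequality along isotopies. The standard contradiction argument shows that failure of $c$-bounded first variation at some $V\in C(S)$ would allow one to perturb the tightened sequence so as to strictly decrease its max below $\bL^c(\Pi)$. Then, via a Pitts-style combinatorial pigeonhole, one extracts from $S$ an element $V\in C(S)$ which is additionally $c$-almost-minimizing in an annular sense, i.e.\ $V=\lim|\partial\Om_k|$ for $(\epsilon_k,\delta_k)$-$c$-almost-minimizing sets $\Om_k\in\C(M)$.

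The key consequence is the $c$-replacement construction: on any admissible small annulus $A\subset M$, solving a constrained $\Ac$-minimization problem produces sets $\Om_k^*$ whose boundaries converge to a varifold $V^*$. By Theorem \ref{T:regularity of Ac minimizers} combined with the stable compactness of Theorem \ref{T:compactness}, $V^*\lc A$ is a smooth, almost embedded, two-sided stable $c$-boundary, which by Lemma \ref{L:classical MP} carries multiplicity one along its regular part. A crucial new ingredient, absent in the minimal case, is that the $\Ac$-defect between $V$ and any such $c$-replacement is bounded by $c\cdot\vol$, which is of higher order than mass and vanishes under rescaling; consequently all tangent cones of $V$ and $V^*$ are stationary, and the good-replacement property forces these tangent cones to be planar.

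The main obstacle is the regularity of $V$ itself. I would take two successive $c$-replacements $V^*$ and $V^{**}$ of $V$ on overlapping concentric annuli $A_1\subset A_2$ centered at a point, and glue them $C^1$ across the common boundary sphere. By the tangent-cone analysis above, $V^*$ and $V^{**}$ share the same tangent planes along this sphere; on the regular sets, $C^1$-matching plus unique continuation for CMC graphs then yields smoothness. Across the touching set, the structure given by Proposition \ref{P:smooth touching set}, the one-sided maximum principle Lemma \ref{L:classical MP}, and careful bookkeeping of the orientations of the approximating $\Om_k^*$---specifically, ensuring that $V^{**}$ is itself the boundary of a set in $A_1\cup A_2$ so that the mean curvature vectors of $V^*$ and $V^{**}$ align---combine to extend the gluing across $\mS$ as an almost embedded CMC hypersurface. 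Iterated replacements extend $V^*$ smoothly into a punctured ball around the center; a removability argument for the isolated point singularity followed by a moving-sphere comparison then identifies $V$ with this extension with multiplicity one, producing the desired almost embedded CMC hypersurface of mean curvature $c$.
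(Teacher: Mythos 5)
Your proposal faithfully reproduces the paper's Almgren--Pitts scheme for the $\Ac$-functional: a Morse-function sweepout to produce $\Pi$ with $\bL^c(\Pi)>0$ via the small-volume isoperimetric inequality, tightening to $c$-bounded first variation, Pitts-style extraction of a $c$-almost-minimizing element, constrained $\Ac$-minimization to build stable $c$-replacements, and the overlapping-annuli gluing (including the orientation-tracking across the touching set, the removable singularity, and the moving-sphere comparison). The only point glossed over is that in the discretized $(1,\M)$-setting $\vol(\Om_x)$ does not literally interpolate continuously, and one must instead use Almgren's isomorphism, the isoperimetric choice, and fineness control to locate a parameter whose volume is close to the desired $v_*$ — but the core estimate and the overall roadmap match the paper.
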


\begin{proof}[Proof of Theorem \ref{thm:main-body}]
This follows from combining Theorem \ref{T:existence of nontrivial sweepouts}, Theorem \ref{T: existence of almost minimizing varifold} and Theorem \ref{T:main-regularity}.
\end{proof}

\subsection{Existence of nontrivial sweepouts}
\label{SS:Existence of nontrivial sweepouts}

\begin{theorem}
\label{T:existence of nontrivial sweepouts}
There exists $\Pi \in \pi^{\sharp}_1\big(\C(M, \M), \{0\}\big)$, such that for any $c>0$, we have $\bL^c(\Pi)>0$.
\end{theorem}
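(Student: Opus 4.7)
The plan is to fix a specific $(1,\M)$-homotopy class $\Pi$ containing a continuously volume-sweeping representative, then exploit the isoperimetric profile of Theorem~\ref{T:Isoperimetric areas} to produce, along any $\{\phi_i\}\in\Pi$, a vertex at which $\Ac$ is strictly positive. The mechanism is that such a representative forces intermediate volume values to be attained at some vertex, and by tuning the target volume to $c$, the isoperimetric lower bound on boundary area will dominate $c\vol$.

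For the construction, for each $i$ fix a cover of $M$ by geodesic balls of radius $r_i\to 0$ and define a reference $\phi^0_i\colon I(1,k_i)_0\to\C(M)$ by declaring $\phi^0_i([j/3^{k_i}])$ to be a partial union of these balls, ordered so that $\vol$ grows nearly linearly from $0$ at $[0]$ to $\vol(M)$ at $[1]$. Adjacent vertices differ by a controlled number of small balls plus surgery, so $\mf(\phi^0_i)\leq Cr_i^n\to 0$ and $\sup_{i,x}\M(\partial\phi^0_i(x))<\infty$; thus $\{\phi^0_i\}$ is a $(1,\M)$-homotopy sequence, and I would let $\Pi$ be its class.

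The core technical step is a volume quasi-continuity lemma: for any $\{\phi_i\}\in\Pi$ and $i$ large, adjacent vertices $x,x'\in I(1,k_i)_0$ satisfy $|\vol(\phi_i(x))-\vol(\phi_i(x'))|\leq C\delta_i^{(n+1)/n}$. Indeed, $T:=\partial\phi_i(x)-\partial\phi_i(x')$ has $\M(T)\leq\delta_i\to 0$, so by the Federer--Fleming isoperimetric inequality for cycles there is an $(n+1)$-chain $Q$ with $\partial Q=T$ and $\M(Q)\leq C\delta_i^{(n+1)/n}$; then $[\phi_i(x)]-[\phi_i(x')]-Q=k[M]$ for some $k\in\{-1,0,1\}$ (when $\delta_i$ is small enough), and the case $k=0$ gives the desired bound. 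The ``topological'' cases $|k|=1$, which would force one volume near $0$ and the other near $\vol(M)$, are ruled out for $i$ large by comparing to $\phi^0_i$ along the $(1,\M)$-homotopies: the $k$-values form an integer cocycle preserved under homotopies of vanishing fineness, and the reference has $k\equiv 0$. Hence, since $\vol(\phi_i([0]))=0$ and $\vol(\phi_i([1]))=\vol(M)$, the discrete intermediate value principle yields for any $V_*\in(0,\vol(M))$ and $i$ large a vertex $x_i$ with $\vol(\phi_i(x_i))\in[V_*,V_*+C\delta_i^{(n+1)/n}]$.

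Given $c>0$, take $V_*:=\min\bigl(V_0,(C_0/(2c))^{n+1}\bigr)$ with $C_0,V_0$ from Theorem~\ref{T:Isoperimetric areas}. For $i$ large, $\vol(\phi_i(x_i))\leq V_0$, so Theorem~\ref{T:Isoperimetric areas} gives $\Area(\partial\phi_i(x_i))\geq C_0 V_*^{n/(n+1)}$, and
\[
\Ac(\phi_i(x_i)) \;\geq\; C_0 V_*^{n/(n+1)} - c\bigl(V_*+C\delta_i^{(n+1)/n}\bigr) \;\geq\; \tfrac{1}{2}C_0 V_*^{n/(n+1)} \;>\; 0
\]
by the choice of $V_*$. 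Taking $\limsup_i$ yields $\bL^c(\{\phi_i\})\geq \tfrac{1}{2}C_0 V_*^{n/(n+1)}>0$, and infimizing over $\{\phi_i\}\in\Pi$ gives the conclusion. The hardest step is the exclusion of $|k|=1$ jumps for arbitrary representatives of $\Pi$: rigorously tracking how the integer cocycle $k$ propagates along $(1,\M)$-homotopies of vanishing fineness requires using that a $\pm[M]$ summand in adjacent fillings creates an $\M$-norm discontinuity that cannot be smoothed out by a homotopy with $\delta\to 0$.
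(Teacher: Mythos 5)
Your overall strategy matches the paper's: fix a homotopy class $\Pi$ represented by a volume-sweeping reference, then show that every representative must pass through a vertex of intermediate volume, at which point the small-volume isoperimetric inequality (Theorem \ref{T:Isoperimetric areas}) forces $\Ac > 0$. However, the central lemma you rely on is false as stated, and the reference construction is also problematic.

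The ``volume quasi-continuity lemma'' -- that adjacent vertices satisfy $|\vol(\phi_i(x))-\vol(\phi_i(x'))|\leq C\delta_i^{(n+1)/n}$ for every representative $\{\phi_i\}\in\Pi$ -- does not hold. To make it hold you need $k=0$ at \emph{each} adjacent pair, but the individual $k$'s are not constrained by the homotopy class; only their telescoped sum is (this is precisely Almgren's isomorphism $F_A$). Concretely, one can append to the Morse sweepout two cancelling ``teleports'' at low boundary area: a step from a tiny set to its near-complement ($k=+1$) and then back ($k=-1$), followed by the genuine sweep. Such a sweepout has $F_A = [[M]]$ and vanishing fineness (the boundary of a tiny ball has small mass), so it lies in $\Pi$, yet it has an adjacent pair with $|\vol(\phi_i(x))-\vol(\phi_i(x'))|\approx\vol(M)$. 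Your closing remark that ``a $\pm[M]$ summand creates an $\M$-norm discontinuity that cannot be smoothed out'' is not correct: the discontinuity is in volume, not boundary mass, and the fineness $\mf$ only controls the latter. The paper avoids this entirely by applying the discrete intermediate value argument to the quantity $\M(\Omega_l)$ where $\Omega_l=\sum_{j\leq l}Q_j$ is the partial sum of isoperimetric fillings. By construction $\M(\Omega_l)$ changes by at most $\delta_i$ at each step (this is true regardless of how $\vol(\phi_i(x_l))$ jumps), $\partial\Omega_l=\partial\phi_i(x_l)$, and the constraint $F_A(\phi_i)=[[M]]$ (via \cite[Theorem 7.1]{AF62}) pins down $\Omega_{\mathrm{final}}=[M]$. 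That is the correct way to convert the topological nontriviality into a mass-theoretic intermediate value statement.

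A second, more minor issue is the reference sweepout. Taking $\phi^0_i(x)$ to be a partial union of a covering by geodesic balls of radius $r_i\to 0$ does not obviously give $\sup_{i,x}\M(\partial\phi^0_i(x))<\infty$: a covering needs on the order of $r_i^{-(n+1)}$ balls, each contributing boundary area on the order of $r_i^n$, and unless the boundaries cancel very efficiently the total is on the order of $r_i^{-1}\to\infty$, violating the definition of a $(1,\M)$-homotopy sequence. The paper instead discretizes the sub-level sets of a fixed Morse function using \cite[Theorem 5.1]{Zhou15b}, which manifestly has uniformly bounded boundary area and for which $F_A(\overline{\phi}_i)=[[M]]$ is immediate; the homotopy invariance of $F_A$ then transfers this to all of $\Pi$.
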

\begin{remark}
\label{rmk:lower bound of Ac}
Let us first describe a heuristic argument using smooth sweepouts which will help to reveal the key idea. Let $C_0>0$ and $V_0>0$ to be the constants in Theorem \ref{T:Isoperimetric areas}, and fix $0<V\leq V_0$ such that $V^{\frac{-1}{n+1}} >2c/C_0$. Note that $V$ only depends on $c,C_0,V_0$. 

Then for any $\Om$ with $\vol(\Om)=V$, we have
\begin{equation}
\label{E:lower bound of Ac}
\Ac(\Om) \geq C_0 V^{\frac{n}{n+1}} - cV > cV >0.
\end{equation}

Now consider any smooth 1-parameter family $\{\Om_x: x\in [0, 1]\}$ satisfying $\Om_0=\emptyset$ and $\Om_1=M$. Since $\{\Om_x\}$ sweeps out $M$, there must exist some $x_0\in(0, 1)$ such that $\vol(\Om_{x_0})=V$, whence $\max_{x\in [0, 1]}\Ac(\Om_x) \geq cV >0$. Since this holds for any sweepout we then have $\bL^c(\Pi) \geq cV>0$. 
\end{remark}

\begin{proof}[Proof of Theorem \ref{T:existence of nontrivial sweepouts}]
Take a Morse function $\phi: M\to [0, 1]$, and consider the sub-level sets $\Phi: [0, 1]\to \C(M)$, given by $\Phi(t)=\{x\in M: \phi(x)<t\}$. By the interpolation theorem of the first author \cite[Theorem 5.1]{Zhou15b}, $\Phi$ can be discretized to a $(1, \M)$-homotopy sequence $\overline{S}=\{\overline{\phi}_i\}$ where $\overline{\phi}_i: I(1, k_i)_0\to (\C(M), \{0\})$. Moreover, under Almgren's isomorphism $F_A$ \cite[\S 3.2]{AF62} (see also \cite[\S 4.2]{Zhou15b}), $\overline{\phi}_i$ is mapped to the fundamental class in $H_{n+1}(M)$ for $i$ large, i.e. $F_A(\overline{\phi}_i)=[[M]]$. Consider $\Pi=[\overline{S}]$, then by \cite[Theorem 7.1]{AF62} for any $S=\{\phi_i\}\in\Pi$, we have $F_A(\phi_i)=[[M]]$ for $i$ large. In particular, this means that $\sum_{j\in I(1, k_i)_0}Q_j=M$ as currents; here for given $j$ and $\al_j=[\frac{j-1}{3^{k_i}}, \frac{j}{3^{k_i}}]=[x_{j-1}, x_j]$, $Q_j\in \bI_{n+1}(M)$ is the isoperimetric choice (c.f. \cite[1.14]{AF62}) of $\partial\phi_i(x_j)-\partial\phi_i(x_{j-1})$, i.e. 
\[ \M(Q_j)= \F(\partial\phi_i(x_j)-\partial\phi_i(x_{j-1})), \text{ and } \partial Q_\al=\partial\phi_i(x_j)-\partial\phi_i(x_{j-1}). \]
Denote $\Om_l=\sum_{j=1}^lQ_j$. Then we have
\[ \partial \Om_l = \partial \phi_i(x_l)-\partial \phi_i(0)=\partial \phi_i(x_l), \]
and this implies, by the Constancy Theorem \cite[26.27]{Si83}, that $\Om_l$ is a Caccioppoli set (possibly with a negative orientation) when $\M(\Om_l)<\vol(M)$. Note that $\M(Q_j)<\mf(\phi_i)=\de_i$, hence by continuity there exists some $l_c\in \N$, $l_c<3^{k_i}$, such that $\M(\Om_{l_c})\in [V-\de_i, V+\de_i]$, where $V$ is as in Remark \ref{rmk:lower bound of Ac}, and $\phi_i(x_{l_c})=\Om_{l_c}$. Then the same argument as in the remark above gives a uniform positive lower bound for $\Ac(\phi_i(x_{l_c}))$, and this finishes the proof.
\end{proof}

\section{Tightening}
\label{S:tightening}

In this section, we construct the tightening map adapted to the $\Ac$ functional and prove that after applying the tightening map to a critical sequence, every element in the critical set has uniformly bounded first variation. Our approach is adapted from those in \cite[\S 4]{CD03} and \cite[\S 4.3]{P81}.

\subsection{Annular decomposition}

Given $L>0$, consider the set of varifolds in $\V_n(M)$ with $2L$-bounded mass: $A^L=\{V\in \V_n(M):\ \|V\|(M)\leq 2L\}$. 
Denote
$$A^c_\infty=\big\{V\in A^L: |\de V(X)|\leq c \int|X| d\mu_V, \text{ for any }X\in\X(M) \big\}.$$
Consider the concentric annuli around $A_\infty$ under the $\mF$-metric, i.e.
$$A_j=\big\{V\in A^L: \frac{1}{2^j}\leq \mF(V, A^c_\infty)\leq \frac{1}{2^{j-1}}\big\}, \quad j\in\N.$$

Since $c$-bounded first variation is a closed condition, we have 
\begin{lemma}\label{A0 is compact}
$A^c_\infty$ is a compact subset of $A^L$ under the $\mF$-metric.
\end{lemma}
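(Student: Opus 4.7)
The plan is to reduce compactness of $A^c_\infty$ to \emph{closedness} inside $A^L$, since $A^L$ is already known to be compact in the $\mathcal{F}$-metric by the standard compactness theorem for varifolds with uniformly bounded mass. So the whole content of the lemma is that the condition of having $c$-bounded first variation is preserved under $\mathcal{F}$-convergence.

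Concretely, I would take a sequence $V_k \in A^c_\infty$ with $V_k \to V$ in the $\mathcal{F}$-metric, and verify $V \in A^c_\infty$. First, since $\|V\|(M) \le \liminf_k \|V_k\|(M) \le 2L$ by lower semicontinuity of mass, we have $V \in A^L$. Second, fix any $X \in \mathfrak{X}(M)$. On one hand, $\mathcal{F}$-convergence of varifolds implies weak-$*$ convergence of the associated Radon measures on $G_n(M)$, so
\[
\delta V_k(X) = \int_{G_n(M)} \operatorname{div}_S X(x)\, dV_k(x,S) \;\longrightarrow\; \int_{G_n(M)} \operatorname{div}_S X(x)\, dV(x,S) = \delta V(X),
\]
because $(x,S)\mapsto \operatorname{div}_S X(x)$ is a continuous, compactly supported function on $G_n(M)$. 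On the other hand, the same weak-$*$ convergence applied to the continuous function $|X|$ on $M$ gives $\int |X|\, d\mu_{V_k} \to \int |X|\, d\mu_V$. Passing to the limit in the inequality $|\delta V_k(X)| \le c\int |X|\, d\mu_{V_k}$ yields $|\delta V(X)| \le c \int |X|\, d\mu_V$, which is exactly the $c$-bounded first variation condition for $V$. Hence $V \in A^c_\infty$.

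Thus $A^c_\infty$ is a closed subset of the $\mathcal{F}$-compact set $A^L$, so it is $\mathcal{F}$-compact. There is no real obstacle here; the only subtle point worth flagging is that one must remember that convergence in the $\mathcal{F}$-metric implies weak-$*$ convergence both of the varifolds on $G_n(M)$ (needed for continuity of $\delta V_k(X)$) and of their weight measures on $M$ (needed for continuity of $\int |X|\, d\mu_{V_k}$). Both follow from the definition of the $\mathcal{F}$-metric recalled in Section~\ref{S:Notation and background}.
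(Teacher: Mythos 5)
Your proof is correct and takes essentially the same approach as the paper, which states only that ``$c$-bounded first variation is a closed condition''; your argument supplies exactly the expected verification that $A^c_\infty$ is closed in the $\mathcal{F}$-compact set $A^L$.
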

 
It is easy to show (by contradiction, for instance) that for any varifold in $A_j$, we can find a vector field satisfying the following condition.
\begin{lemma}
\label{L:vf to deform Ac}
For any $V\in A_j$, there exists $X_V\in\X(M)$, such that
\begin{equation}
\label{E:1st variation upper bounded}
\|X_V\|_{C^1(M)}\leq 1, \quad \de V(X_V)-c\int_M |X_V|d\mu_V\leq -c_j<0,
\end{equation}
where $c_j$ depends only on $j$.
\end{lemma}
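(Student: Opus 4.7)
The plan is to argue by contradiction, exploiting the quantitative failure of $c$-bounded first variation on $A_j$ together with a compactness argument in the $\mF$-topology. Suppose no such $c_j>0$ exists. Then for each $k\in\N$ I can find $V_k\in A_j$ such that, for every $X\in\X(M)$ with $\|X\|_{C^1(M)}\leq 1$, we have
\[
\de V_k(X)-c\int_M |X|\,d\mu_{V_k} > -\tfrac{1}{k}.
\]
Applying this also to $-X$ (which satisfies the same $C^1$-bound) yields the two-sided inequality $|\de V_k(X)|\leq c\int_M|X|\,d\mu_{V_k}+\tfrac{1}{k}$ for every admissible $X$.

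Next, I would invoke Lemma \ref{A0 is compact} and the fact that varifolds with mass $\leq 2L$ form an $\mF$-compact subset of $\V_n(M)$ to extract a subsequence $V_k\to V_\infty$ in the $\mF$-metric. The key continuity observations are: (i) the first variation pairing $V\mapsto \de V(X)$ is continuous under $\mF$-convergence, because $\mF$-convergence is precisely weak convergence on $G_n(M)$ and $X\mapsto \mathrm{div}_S X$ is a continuous function on $G_n(M)$; (ii) $V\mapsto \int_M|X|\,d\mu_V$ is continuous for fixed continuous $X$. Passing to the limit in the displayed inequality above gives
\[
|\de V_\infty(X)|\leq c\int_M|X|\,d\mu_{V_\infty} \quad\text{for every }X\in\X(M),\ \|X\|_{C^1}\leq 1,
\]
and then by linearity/homogeneity the same bound extends to arbitrary $X\in\X(M)$. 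Therefore $V_\infty\in A^c_\infty$.

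Finally, since $\mF$ is a metric and $V\mapsto \mF(V,A^c_\infty)$ is $1$-Lipschitz, the closed condition $\mF(V_k,A^c_\infty)\geq 1/2^j$ passes to the limit to give $\mF(V_\infty,A^c_\infty)\geq 1/2^j>0$, contradicting $V_\infty\in A^c_\infty$.

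The main technical obstacle I anticipate is justifying the continuity of $\de V_k(X)$ under $\mF$-convergence together with the preservation of the mass bound $\|V_\infty\|(M)\leq 2L$ so that $V_\infty$ lies in the compact set $A^L$; both follow from the standard properties of the $\mF$-metric recorded in \cite[\S 2.1]{P81}, but care is needed because mass is only lower semicontinuous in general. Once these standard facts are in place the argument is a soft compactness plus symmetrization (replacing $X$ by $-X$) argument and no delicate geometric input is required.
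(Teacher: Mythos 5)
Your proof is correct and follows the same approach the paper alludes to: it says the lemma ``is easy to show (by contradiction, for instance)'' and gives no further detail, so your compactness-plus-contradiction argument is precisely what is intended. The negation is stated correctly, and the symmetrization trick (applying the estimate to both $X$ and $-X$, then homogenizing) cleanly upgrades the one-sided bound on the unit $C^1$-ball to the full two-sided, unnormalized defining inequality for $A^c_\infty$.

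Two small remarks on the bookkeeping, neither of which affects correctness. First, the reference to Lemma \ref{A0 is compact} (compactness of $A^c_\infty$) is not actually used in your argument; the extraction of the subsequence relies only on $\mF$-compactness of $A^L$ itself (equivalently, weak$^*$ compactness of the mass-bounded set of Radon measures on the compact space $G_n(M)$), which you do state. Second, your caution that ``mass is only lower semicontinuous in general'' is unnecessary here: for varifolds on a compact ambient manifold, the constant function $1$ is a continuous test function on $G_n(M)$, so total mass is continuous under $\mF$-convergence and $\|V_\infty\|(M)\leq 2L$ follows immediately. Once one records that $\mF(\cdot,A^c_\infty)$ is continuous (indeed $1$-Lipschitz), $A_j$ is a closed, hence compact, subset of $A^L$, so $V_\infty\in A_j$ while simultaneously $V_\infty\in A^c_\infty$, which is the desired contradiction.
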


\subsection{A map from $A^L$ to the space of vector fields}

In this part, we will construct a map $X: A^L \rightarrow \X(M)$, which is continuous with respect to the $C^1$ topology on $\X(M)$.

Given $V\in A_j$, let $X_V$ be given in Lemma \ref{L:vf to deform Ac}. Since $div_S X_V$ is Lipschitz on $G_n(M)$ for fixed $X_V$, the map
\[ W \to \de W(X_V)-c \int_M|X_V|d\mu_W=\int_{G_n(M)}div_S (X_V) dW(x, S)-c \int_M|X_V|d\mu_W \]
is continuous with respect to the $\mF$-metric. Therefore for any $V\in A_j$, there exists $0<r_V<\frac{1}{2^{j+1}}$, such that for any $W\in U_{r_V}(V)$, i.e. $\mF(W, V)<r_V$,
\begin{equation}
\label{E:1st variation upper bound 2}
\de W(X_V)-c \int |X_V|d\mu_W \leq \frac{1}{2}\left(\de V(X_V)- c \int |X_V|d\mu_V\right)\leq -\frac{1}{2}c_j<0.
\end{equation}
Now $\big\{U_{r_V/2}(V): V\in A_j\big\}$ is an open covering of $A_j$. By the compactness of $A_j$, we can find finitely many balls $\big\{U_{r_{j, i}}(V_{j, i}): V_{j, i}\in A_j, 1\leq i\leq q_j\big\}$, where $r_{j, i}=r_{V_{j, i}}$, such that
\begin{itemize}

\item[$(\rom{1})$] The balls $U_{r_{j, i}/2}(V_{j, i})$ with half radii cover $A_j$;

\item[$(\rom{2})$] The balls $U_{r_{j, i}}(V_{j, i})$ are disjoint from $A_k$ if $|j-k|\geq 2$.
\end{itemize}
In the following, we denote $U_{r_{j, i}}(V_{j, i})$, $U_{r_{j, i}/2}(V_{j, i})$ and $X_{V_{j, i}}$ by $U_{j, i}$, $\ti{U}_{j, i}$ and $X_{j, i}$ respectively.

Now we can construct a partition of unity $\{\varphi_{j, i}: j\in\N, 1\leq i\leq q_j\}$ sub-coordinate to the covering $\big\{\ti{U}_{j, i}, 1\leq i\leq q_j, j\in\N\big\}$ by
$$\varphi_{j, i}(V)=\frac{\psi_{j, i}(V)}{\sum\{\psi_{p, q}(V), p\in\N, 1\leq q\leq q_p\}},$$
where $\psi_{j, i}(V)=\mF(V, A^L\backslash \ti{U}_{j, i})$.

The map $X: A^L\rightarrow \X(M)$ is defined by
\begin{equation}
\label{E:construction of vector fields H}
X(V)=\mF(V, A^c_\infty) \sum_{j\in\N, 1\leq i\leq q_j}\varphi_{j, i}(V)X_{j, i}.
\end{equation}
The following lemma is a straightforward consequence of the construction.
\begin{lemma}
\label{L:properties of HV}
The map $X: V\rightarrow X(V)$ is continuous with respect to the $C^1$ topology on $\X(M)$.
\end{lemma}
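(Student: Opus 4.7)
The plan is to verify $C^1$-continuity of the map $V\mapsto X(V)$ by splitting $A^L$ into the central set $A^c_\infty$ and the concentric annuli $\{A_j\}$, and treating the two regimes separately. The key simplifying observation is that each vector field $X_{j,i}$ is a \emph{fixed} element of $\X(M)$ not depending on $V$, so $C^1$-continuity of $X$ reduces to continuity (as real-valued functions on $A^L$ with the $\mF$-metric) of the scalar coefficients $\mF(V, A^c_\infty)$ and $\varphi_{j,i}(V)$, together with local finiteness of the sum in (\ref{E:construction of vector fields H}).

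For $V\in A_j$, I would first invoke covering property (II): since $U_{j,i}$ is disjoint from $A_k$ whenever $|j-k|\geq 2$, in a small $\mF$-neighborhood of $V$ the only terms with $\varphi_{p,q}\neq 0$ are those with $|p-j|\leq 1$, giving a finite sum. The distance-to-closed-set functions $\psi_{p,q}(W)=\mF(W, A^L\setminus \tilde U_{p,q})$ are $1$-Lipschitz in $\mF$, hence continuous. Since the half-radius balls $\tilde U_{j,i}$ cover $A_j$, there is some $i_0$ with $\psi_{j,i_0}(V)>0$, so the denominator $\sum_{p,q}\psi_{p,q}$ is bounded away from zero on a neighborhood of $V$; therefore each $\varphi_{p,q}$ is continuous at $V$. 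Multiplying the finite combination $\sum_{p,q}\varphi_{p,q}(V)X_{p,q}$ (which varies continuously in the $C^1$ norm, as each $X_{p,q}$ is fixed) by the continuous scalar $\mF(V, A^c_\infty)$ yields $C^1$-continuity of $X$ at $V$.

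For $V\in A^c_\infty$ we have $X(V)=0$ since the prefactor $\mF(V,A^c_\infty)$ vanishes. To establish continuity there, I would estimate, for any $W\in A^L$ lying in some annulus $A_j$,
\begin{equation*}
\|X(W)\|_{C^1}\;\leq\;\mF(W,A^c_\infty)\sum_{p,q}\varphi_{p,q}(W)\,\|X_{p,q}\|_{C^1}\;\leq\;\mF(W,A^c_\infty),
\end{equation*}
using that $\{\varphi_{p,q}\}$ is a partition of unity on $A^L\setminus A^c_\infty$ and $\|X_{p,q}\|_{C^1}\leq 1$ by (\ref{E:1st variation upper bounded}). Since $\mF(W,A^c_\infty)\to 0$ as $W\to V$, continuity at $V$ follows. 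There is no substantive obstacle in this lemma; the only subtlety is confirming that the summation is well-defined at every $V\in A^L$, which is ensured by the prefactor $\mF(\cdot, A^c_\infty)$ killing any potential indeterminacy on $A^c_\infty$, combined with the locally finite covering structure of the $A_j$.
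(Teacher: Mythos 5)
Your proof is correct and fills in precisely the details the paper leaves implicit (the paper dismisses this lemma as ``a straightforward consequence of the construction'' without elaboration). The decomposition into the annular case and the central-set case, the use of local finiteness via covering property $(\rom{2})$, the Lipschitz continuity of the $\psi_{p,q}$, the positivity of the denominator coming from the half-radius covering $(\rom{1})$, and the vanishing prefactor estimate $\|X(W)\|_{C^1}\leq \mF(W,A^c_\infty)$ at $A^c_\infty$ are all the intended ingredients.

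One small numerical inaccuracy that does not affect validity: from $V\in A_j$ and covering property $(\rom{2})$ alone, a nearby $W$ can lie in $A_{j-1}\cup A_j\cup A_{j+1}$ (the annuli overlap at their boundary spheres), so a ball $\tilde U_{p,q}$ containing $W$ need only satisfy $|p-j|\leq 2$ rather than $\leq 1$. Since there are finitely many balls attached to each of the finitely many relevant indices $p$, the sum remains locally finite and your continuity argument goes through unchanged; you may simply wish to relax the bound to $|p-j|\leq 2$ for precision.
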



\subsection{A map from $A^L$ to the space of isotopies}
\label{SS: A map from A to the space of isotopies}

In this part, we will associate each $V\in A$ with an isotopy of $M$ in a continuous manner. The isotopy will be generated by the vector field $X(V)$. In particular, given $V\in A^L$, we use $\Phi_V: \R^+\times M\rightarrow M$ to denote the one parameter group of diffeomorphisms generated by $X(V)$. 

Given $\Om\in\C(M)$ with $\partial\Om\in A^L$, we will deform $\Om$ by $\Phi_{|\partial \Om|}(t)$ to get a 1-parameter family of sets of finite perimeter $\Om_t=\Phi_{|\partial\Om|}(t)(\Om)$, and we will show that the $\Ac$ functional of $\Om_t$ for some $t>0$ can be deformed down by a fixed amount depending only on $\mF(|\partial\Om|, A^c_\infty)$.

In fact, given $V\in A_j$, let $\rho(V)$ be the smallest radii of the balls $\ti{U}_{k, i}$ which contain $V$. As there are only finitely many balls $\ti{U}_{k, i}$ which intersect $A_j$ nontrivially, we know that $\rho(V)\geq r_j>0$, where $r_j$ depends only on $j$; moreover, by construction the sub-index $k$ of these $\ti{U}_{k, i}$ can only be $j-1, j$, or $j+1$. Then by (\ref{E:1st variation upper bound 2}) and (\ref{E:construction of vector fields H}), we have for any $W\in U_{\rho(V)}(V)$ that
$$\de W(X(V))-c\int |X(V)|d\mu_W \leq -\frac{1}{2^{j+1}}\min\{c_{j-1}, c_j, c_{j+1}\}.$$
Therefore we can find two continuous functions $g:\R^+\rightarrow \R^+$ and $\rho: \R^+\rightarrow \R^+$, such that $\rho(0)=0$ and 
\[ \de W(X(V))-c\int |X(V)|d\mu_W \leq -g\big(\mF(V, A^c_\infty)\big),\quad \text{ if } \mF(W, V)\leq \rho\big(\mF(V, A^c_\infty)\big). \]
In particular,  by (\ref{E: 1st variation for Ac}),
\begin{equation}
\label{E:1st variation upper bound-continuous version}
\de \Ac|_\Om(X(V))\leq -g\big(\mF(V, A^c_\infty)\big),\quad \text{ if } \Om\in\C(M), \mF(|\partial\Om|, V)\leq \rho\big(\mF(V, A^c_\infty)\big).
\end{equation}

Next, we will construct a continuous time function $T: [0, \infty) \rightarrow [0, \infty)$, such that
\begin{itemize}
\item[$(\rom{1})$] $\lim_{t\rightarrow 0}T(t)=0$, and $T(t)>0$ if $t\neq 0$;

\item[$(\rom{2})$] For any $V\in A^L$, denote $\ga=\mF(V, A^c_\infty)$; then $V_t=\big(\Phi_V(t)\big)_{\#}V\in U_{\rho(\ga)}(V)$ for all $0\leq t\leq T(\ga)$.
\end{itemize}
In fact, given $V\in A_j$, and $\rho=\rho\big(\mF(V, A^c_\infty)\big)>0$, there exists $T_V>0$, such that $V_t\in U_\rho(V)$ for all $0\leq t\leq T_V$. Moreover, by the compactness of $A_j$ and the continuity of $\Phi_V(t)_\# V$ in $V$ and $t$, we may choose $T_V$ such that $T_V\geq T_j>0$ for all $V\in A_j$, where $T_j$ depends only on $j$. Interpolating between the $T_j$ yields the desired continuous function $T$ depending only on $\mF(V, A^c_\infty)$. 

\vspace{1em}
In summary, given $V\in A^L\backslash A^c_\infty$, denote $\ga=\mF(V, A^c_\infty)>0$,
\[ \Psi_V(t, \cdot)=\Phi_V\big(T(\ga)t, \cdot\big),\quad \textrm{for } t\in[0, 1], \]
and $L: \R^+\rightarrow \R^+$, with $L(\ga)=T(\ga)g(\ga)$; then $L(0)=0$ and $L(\ga)>0$ if $\ga>0$. We can deform $V$ through a continuous family  $\left\{V_t=\big(\Psi_V(t)\big)_{\#}V: t\in [0, 1]\right\} \subset U_{\rho(\gamma)}(V)$, such that
\begin{itemize}
\item[$(\rom{1})$] The map $(t, V)\rightarrow V_t$ is continuous under the $\mF$-metric;

\item[$(\rom{2})$] Using (\ref{E:1st variation upper bound-continuous version}), when $V=\partial \Om$, $\Om\in\C(M)$, $\ga=\mF(|\partial\Om|, A^c_\infty)>0$, we have
\begin{equation}
\label{E: decrease Ac by isotopy}
\begin{split}
\Ac(\Om_1)-\Ac(\Om) & \leq \int_{0}^{T(\ga)}[\de \Ac|_{\Om_t}] (X(|\partial\Om)|) dt\leq -T(\ga)g(\ga)\\
                                       & = -L(\ga)<0.
\end{split}
\end{equation}
\end{itemize}
Finally note that the flow $\Psi_V(t, \cdot)$ is generated by the vector field 
\begin{equation}
\label{E:tiX}
\ti{X}(V)=T(\ga)X(V).
\end{equation}

\subsection{Deforming sweepouts by the tightening map}
\label{SS:Deforming sweepouts by the tightening map}

Applying our tightening map constructed above in place of \cite[\S 4.3]{P81} to a critical sequence provided by Lemma \ref{L:critical sequence}, we can deduce the following result. 

\begin{proposition}[Tightening]
\label{P:tightening}
Let $\Pi \in \pi^{\sharp}_1\big(\C(M, \M), \{0\}\big)$, and assume $\bL^c(\Pi)>0$. For any critical sequence $S^*$ for $\Pi$, there exists another critical sequence $S$ for $\Pi$ such that $C(S) \subset C(S^*)$ and each $V\in C(S)$ has $c$-bounded first variation.
\end{proposition}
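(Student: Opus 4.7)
The plan is to deform the given critical sequence $S^* = \{\phi_i^*\}$ pointwise by the time-$1$ tightening isotopies $\Psi_V(1, \cdot)$ constructed in Section \ref{SS: A map from A to the space of isotopies}, and then to argue that any limit varifold of the new sequence must lie in $A^c_\infty$ via the uniform $\Ac$-decrease (\ref{E: decrease Ac by isotopy}). Concretely, for each $i$ and each vertex $x \in I(1, k_i)_0$ set $\phi_i(x) := \Psi_{|\partial \phi_i^*(x)|}(1, \cdot)(\phi_i^*(x))$. At the endpoints $[0], [1]$ we have $\phi_i^*([0]), \phi_i^*([1]) \in \{\emptyset, M\}$, so the boundary varifolds vanish and lie in $A^c_\infty$; thus $T = 0$ there and $\Psi$ reduces to the identity, preserving the boundary condition.

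To verify that $S := \{\phi_i\} \in \Pi$ with $\mf(\phi_i) \to 0$, I would invoke the $\mF$-to-$C^1$ continuity of $V \mapsto \ti{X}(V)$ (Lemma \ref{L:properties of HV} combined with continuity of $T$), which implies $V \mapsto \Psi_V(1, \cdot)$ is continuous into $\mathrm{Diff}(M)$ in the $C^1$ topology. Splitting
\[
\partial \phi_i(x) - \partial \phi_i(y) = \Psi_{V_x}(1)_\# (\partial \phi_i^*(x) - \partial \phi_i^*(y)) + (\Psi_{V_x}(1) - \Psi_{V_y}(1))_\# \partial \phi_i^*(y),
\]
with $V_x = |\partial \phi_i^*(x)|$, and using the uniform bound $\M(\partial \phi_i^*(y)) \leq 2L$, gives $\mf(\phi_i) \leq C(L)\,\mf(\phi_i^*) \to 0$. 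The interpolating family $s \mapsto \Psi_{V_x}(s, \cdot)(\phi_i^*(x))$ for $s \in [0, 1]$ then provides, after discretization as in Section \ref{homotopy sequences}, a $1$-homotopy between $\phi_i^*$ and $\phi_i$ of fineness $\to 0$, confirming $S \in \Pi$. Vertex-by-vertex, (\ref{E: decrease Ac by isotopy}) yields $\Ac(\phi_i(x)) \leq \Ac(\phi_i^*(x))$, so $\bL^c(S) \leq \bL^c(S^*) = \bL^c(\Pi)$ and $S$ is a critical sequence.

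The main step is to show $C(S) \subset C(S^*)$ together with $c$-bounded first variation. Fix $V \in C(S)$ with $V = \lim_j |\partial \phi_{i_j}(x_j)|$ and $\Ac(\phi_{i_j}(x_j)) \to \bL^c(\Pi)$. After passing to a subsequence, $W := \lim_j |\partial \phi_{i_j}^*(x_j)|$ exists; since $\Ac(\phi_{i_j}^*(x_j)) \geq \Ac(\phi_{i_j}(x_j))$ and $\bL^c(S^*) = \bL^c(\Pi)$, both sides converge to $\bL^c(\Pi)$, so $W \in C(S^*)$. If $\gamma := \mF(W, A^c_\infty) > 0$, then for $j$ large the varifold $V_j := |\partial \phi_{i_j}^*(x_j)|$ satisfies $\mF(V_j, A^c_\infty) \geq \gamma/2$, so (\ref{E: decrease Ac by isotopy}) together with continuity of $L$ gives $\Ac(\phi_{i_j}(x_j)) - \Ac(\phi_{i_j}^*(x_j)) \leq -L(\gamma/2) < 0$, contradicting the common limit. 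Hence $W \in A^c_\infty$, and $\gamma_j := \mF(V_j, A^c_\infty) \to 0$ forces $T(\gamma_j) \to 0$, so $\Psi_{V_j}(1, \cdot) \to \mathrm{id}$ uniformly; then $|\partial \phi_{i_j}(x_j)| - V_j \to 0$ as varifolds, yielding $V = W \in C(S^*) \cap A^c_\infty$.

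The main technical obstacle is the fineness bookkeeping above: one must control the effect of using a \emph{vertex-dependent} vector field $\ti{X}(V_x)$ at each $x$, which requires both $\mf(\phi_i^*) \to 0$ and a uniform $C^1$ modulus of continuity for $V \mapsto \ti{X}(V)$ on the compact set $A^L$. Conceptually, however, the heart of the argument is the uniform strict $\Ac$-decrease (\ref{E: decrease Ac by isotopy}) off of $A^c_\infty$, which forces the critical set of $S$ to concentrate on $A^c_\infty$; the adaptation from the minimal case (where $A^0_\infty$ is the set of stationary varifolds) to the CMC setting rests precisely on having this decrease at the varifold level, via the detour through sets of finite perimeter in (\ref{E: 1st variation for Ac}).
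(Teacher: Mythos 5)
Your overall plan matches the paper's: apply the tightening isotopy $\Psi_{|\partial\phi^*_i(x)|}(1,\cdot)$ vertex by vertex, and then use the quantitative $\Ac$-decrease (\ref{E: decrease Ac by isotopy}) to force the critical set of the new sequence into $A^c_\infty$. Your Claim-1-type argument in the third paragraph (showing $W=\lim|\partial\phi^*_{i_j}(x_j)| \in C(S^*)\cap A^c_\infty$ and then $V=W$) is correct and essentially identical to the paper's Claim 1.

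However, there is a genuine gap in your fineness estimate. You claim that the splitting
\[
\partial \phi_i(x) - \partial \phi_i(y) = \bigl(\Psi_{V_x}(1)\bigr)_\# \bigl(\partial \phi_i^*(x) - \partial \phi_i^*(y)\bigr) + \bigl(\Psi_{V_x}(1)\bigr)_\# \partial \phi_i^*(y) - \bigl(\Psi_{V_y}(1)\bigr)_\# \partial \phi_i^*(y)
\]
together with $C^1$-continuity of $V\mapsto\ti{X}(V)$ yields $\mf(\phi_i)\leq C(L)\,\mf(\phi^*_i)$. The first term is fine, since pushforward by a uniformly bi-Lipschitz diffeomorphism controls mass. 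But the second term fails: the \textbf{mass} $\M\bigl(\Psi_{V_x}(1)_\#T - \Psi_{V_y}(1)_\#T\bigr)$ is \emph{not} small just because $\Psi_{V_x}(1)$ and $\Psi_{V_y}(1)$ are $C^1$-close. Already in $\R^2$, translating a line by $\ep$ in two different small amounts gives currents whose difference has mass $2\M(T)$, regardless of how small $\ep$ is. Only the \emph{flat norm} of that term is small. Since fineness (\ref{fineness}) is measured in $\M$, the inequality $\mf(\phi_i)\leq C(L)\mf(\phi^*_i)$ is false in general, and the vertex-dependent isotopy genuinely destroys $\M$-fineness. This is precisely the obstacle the paper singles out: ``since the isotopies $\Psi_{|\partial\phi^*_i(x)|}$ depend on $x$, the fineness of $\{\phi^1_i\}$ could be large even if $\mf(\phi^*_i)$ is small.''

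The paper's resolution is its Claim 2: one cannot take $\phi_i=\phi_i^1$ directly. Instead one builds a finer discrete map $\phi_i: I(1,l_i)_0\to(\C(M),\{0\})$ that agrees with $\phi_i^1$ on the coarse grid (property (a)), has $\mf(\phi_i)\to 0$ (property (b)), and whose values remain $\mF$-close to and $\Ac$-close to those of $\phi_i^1$ (properties (c),(d)). This is done by extending $\phi^1_i$ to a piecewise $\mF$-continuous map on $I$ and invoking the discretization theorem \cite[Theorem 5.1]{Zhou15b} --- a nontrivial interpolation that converts $\mF$-continuity into small $\M$-fineness while controlling $\Ac$ (see Appendix \ref{A:proof of claim 2}). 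Your remark about ``after discretization as in Section \ref{homotopy sequences}'' is in the right spirit, but it is used only to discretize the continuous homotopy between $\phi^*_i$ and $\phi^1_i$, not to repair the fineness of $\phi^1_i$ itself, which is where the actual work lies. You need to replace your fineness estimate with a Claim-2-style interpolation, and then rerun your final paragraph using properties (a),(c),(d) to transfer the conclusion from $\phi_i$ back to $\phi^1_i$ and $\phi^*_i$.
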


\begin{proof}
Take $S^*=\{\phi_i^*\}$, where $\phi^*_i: I(1, k_i)_{0}\to\big(\C(M), \{0\}\big)$, and $\phi^*_i$ is $1$-homotopic to $\phi^*_{i+1}$ in $\big(\C(M), \{0\}\big)$ with fineness $\de_i \searrow 0$. Let $\Xi_i: I(1, k_i)_0\times [0, 1]\to \C(M)$ be defined as
\[ \Xi_i(x, t)= \Psi_{|\partial \phi^*_i(x)|}(t) \big( \phi^*_i(x)\big) . \]

Denote $\phi_i^t(\cdot)=\Xi_i(\cdot, t)$. Heuristically, one would like to set $\phi_i=\phi_i^1$ as the desired sequence, but since the isotopies $\Psi_{|\partial \phi^*_i(x)|}$ depend on $x$, the fineness of $\{\phi^1_i\}$ could be large even if $\mf(\phi^*_i)$ is small. Thus we need to interpolate $\phi^1_i$ to get the desired $\phi_i$, but we need to make sure the values of $\phi_i$ after interpolation are $\mF$-close to those of $\phi^1_i$. Similar difficulties appeared in the same way in \cite[\S 15]{MN14}. The authors in \cite{MN14} used a discrete-to-continuous interpolation argument. Unfortunately we cannot adapt their argument, since their constructions involve currents which may not be boundaries of Caccioppoli sets. Instead, we develop another interpolation method in Claim 2. Before that, we pause to prove:

\vspace{0.3cm}
\textit{Claim 1: if $\lim_{i\to\infty}\Ac(\phi^1_i(x_i))=\bL^c(\Pi)$, then (up to relabeling) there is a subsequence $\{\phi^1_i(x_i)\}$ converging (as varifolds) to a varifold in $C(S^*)$ of $c$-bounded first variation.}

\vspace{0.3cm}
\textit{Proof of Claim 1:} By (\ref{E: decrease Ac by isotopy}),
\begin{equation}
\Ac(\phi_i^1(x_i))-\Ac(\phi^*_i(x_i)) = -L(\ga_i),
\end{equation}
where $\ga_i=\mF(|\partial \phi^*_i(x_i)|, A^c_\infty)$. Therefore,
\[ \bL^c(\Pi) = \lim \Ac(\phi_i^1(x_i)) = \lim \Ac(\phi^*_i(x_i)) - L(\lim \ga_i) \leq \bL^c(\Pi) - L(\lim\ga_i), \] 
so actually we must have $\lim \ga_i=0$ and this implies that $\lim|\partial \phi^*_i(x_i)| \in A^c_\infty$. Moreover, by our construction of the tightening map, each $|\partial\phi^1_i(x_i)|$ had to be $\rho(\ga_i)$-close to $|\partial\phi^*_i(x_i)|$ under the $\mF$-metric, therefore \[\lim|\partial \phi^1_i(x_i)|=\lim|\partial \phi^*_i(x_i)| \in A^c_\infty\cap C(S^*),\] and this finishes the proof of the claim.

\vspace{0.3cm}
\textit{Claim 2: there exist integers $l_i>k_i$ and maps $\phi_i: I(1, l_i)_0\to (\C(M), \{0\})$ for each $i$, such that $S=\{\phi_i\}$ is homotopic to $S^*$, and
\begin{itemize}
\item[(a)] $\phi_i^1=\phi_i\circ \n(l_i, k_i)$ on $I(1, k_i)_0$;
\item[(b)] $\mf(\phi_i)\to 0$, as $i\to \infty$;
\item[(c)]  $\Ac(\phi_i(x))-\max\{\Ac(\phi^1_i(y)): \al\in I(1, k_i)_1, x, y\in \al\}\to 0$, uniformly in $x\in I(1, l_i)_0$ as $i\rightarrow \infty$. 
\item[(d)] $\max\{\mF(\partial\phi_i(x), \partial \phi^1_i(y)): \al\in I(1, k_i)_1, x, y\in \al\} \to 0$, as $i\rightarrow \infty$.
\end{itemize}}

\textit{Proof of Claim 2:} 
The idea is to extend $\phi^1_i$ to a \textit{piecewise} continuous (with respect to the $\mF$-metric) map on $I$ and then apply the discretization result in \cite[Theorem 5.1]{Zhou15b}. However, since this procedure is somewhat technical, the proof is deferred to Appendix \ref{A:proof of claim 2}. \qed

\vspace{0.3cm}
In particular, $S$ is a valid sequence in $\Pi$, and we now check that it satisfies the requirements of the proposition. First, property (c) and the fact that $S^*$ is a critical sequence directly imply that $S$ is also a critical sequence. It remains to show that every element in $C(S)$ must lie in $C(S^*)$ and have $c$-bounded first variation. Given $V\in C(S)$, one can find a subsequence (without relabeling) $\{\phi_i(\overline{x}_i): \overline{x}_i\in I(1, l_i)_0\}\subset \C(M)$, such that $V=\lim |\partial \phi_i(\overline{x}_i)|$ as varifolds, and
\[ \lim\Ac(\phi_i(\overline{x}_i))=\bL^c(\Pi). \]
We will need to first consider $\phi_i(x_i)=\phi_i^1(x_i)$, where $x_i$ is the nearest point to $\overline{x}_i$ in $I(1, k_i)_0$. By (c) and (d), we have $\lim\Ac(\phi^1_i(x_i))=\bL^c(\Pi)$ and also $\lim |\partial \phi_i(\overline{x}_i)|=\lim |\partial \phi^1_i(x_i)|$ as varifolds. Then by Claim 1, we conclude that $V\in A^c_\infty\cap C(S^*)$. This completes the proof. 
\end{proof}

\section{$c$-Almost minimizing}
\label{S:c-Almost minimizing}

In this section, we introduce the notion of $c$-almost minimizing varifolds, and prove the existence of such a varifold from min-max construction. We prove the existence of a $c$-replacement for any $c$-almost minimizing varifold. Using this property, we show that every blowup of such varifold is regular. As an easy consequence, the tangent cones of such varifolds are always integer multiples of planes. 

\begin{definition}[$c$-almost minimizing varifolds]
\label{D:c-am-varifolds}
Let $\nu$ be the $\F$, $\M$-norms or the $\mF$-metric. For any given $\ep, \de>0$ and an open subset $U\subset M$, we define $\sA^c_n(U; \ep, \de; \nu)$ to be the set of all $\Om\in\C(M)$ such that if $\Om=\Om_0, \Om_1, \Om_2, \cdots, \Om_m\in\C(M)$ is a sequence with:
\begin{itemize}
\item[(i)] $\spt(\Om_i-\Om)\subset U$;
\item[(ii)] $\nu(\partial\Om_{i+1}, \partial\Om_i)\leq \de$;
\item[(iii)] $\Ac(\Om_i)\leq \Ac(\Om)+\de$, for $i=1, \cdots, m$,
\end{itemize}
then $\Ac(\Om_m)\geq \Ac(\Om)-\ep$.

We say that a varifold $V\in\V_n(M)$ is {\em $c$-almost minimizing in $U$} if there exist sequences $\ep_i \to 0$, $\de_i \to 0$, and $\Om_i\in \sA^c_n(U; \ep_i, \de_i; \F)$, such that $\mF(|\partial\Om_i|, V)\leq \ep_i$.
\end{definition}

The following simple fact says that $c$-almost minimizing implies $c$-bounded first variation.
\begin{lemma}
\label{L:c-am implies c-bd-first-variation}
Let $V\in\V_n(M)$ be $c$-almost minimizing in $U$, then $V$ has $c$-bounded first variation in $U$.
\end{lemma}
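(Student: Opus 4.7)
My plan is to argue by contradiction. Suppose $V$ fails to have $c$-bounded first variation in $U$; then there exists $X \in \X(U)$ with $|\de V(X)| > c \int_M |X|\, d\mu_V$. After possibly replacing $X$ by $-X$, we may assume $\de V(X) < -c\int_M |X|\, d\mu_V$. Since $|\partial\Om_i|\to V$ in the $\mF$-metric and the bound $|\int_{\partial\Om_i} X\cdot\nu_i\, d\mu_{V_i}| \leq \int_M |X|\, d\mu_{V_i}$ holds, one has
\begin{equation*}
[\de\Ac|_{\Om_i}](X) = \de V_i(X) - c\int_{\partial\Om_i} X\cdot \nu_i\, d\mu_{V_i} \;\leq\; \de V_i(X) + c\int_M |X|\, d\mu_{V_i} \;\longrightarrow\; \de V(X) + c\int_M |X|\, d\mu_V < 0,
\end{equation*}
so $[\de\Ac|_{\Om_i}](X) \leq -\eta$ for some fixed $\eta>0$ and all large $i$.

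Let $\phi_t$ denote the flow of $X$; note $\spt(\phi_t(\Om_i)\triangle \Om_i) \subset U$. The plan is to show that on a \emph{uniform} time interval $[0,t_0]$ (independent of $i$), the function $s\mapsto \Ac(\phi_s(\Om_i))$ is monotonically decreasing with $\Ac(\phi_{t_0}(\Om_i)) \leq \Ac(\Om_i) - \eta t_0/2$. Using the Jacobian expansion on the area part and the divergence theorem on the volume part, one obtains a quadratic expansion
\begin{equation*}
\Ac(\phi_s(\Om_i)) = \Ac(\Om_i) + s\,[\de\Ac|_{\Om_i}](X) + R_i(s),\qquad |R_i(s)|\leq C_0 s^2,
\end{equation*}
where $C_0$ depends only on $\|X\|_{C^2}$, $\sup_i \M(\partial\Om_i)$ (which is bounded since $V_i\to V$ in $\mF$) and $\vol(M)$. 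A similar estimate controls the shift of the first variation, $|[\de\Ac|_{\phi_s(\Om_i)}](X) - [\de\Ac|_{\Om_i}](X)| \leq C_2 s$, uniformly in $i$. Choosing $t_0 \leq \min(\eta/(2C_0), \eta/(2C_2))$ yields both the monotone decrease and the quantitative drop by $\eta t_0/2$.

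To match the discrete framework of Definition \ref{D:c-am-varifolds}, partition $[0,t_0]$ into $m$ equal subintervals with $m$ so large that $\F(\partial\phi_{(k+1)t_0/m}(\Om_i) - \partial\phi_{kt_0/m}(\Om_i)) \leq \de_i$ (possible by a uniform Lipschitz estimate for $s\mapsto \partial\phi_s(\Om_i)$ in the $\F$-norm). Setting $\Om_{i,k} = \phi_{kt_0/m}(\Om_i)$ then gives a sequence satisfying conditions (i) and (ii), while the monotone decrease gives (iii) with room to spare. The definition of $\sA^c_n(U;\ep_i,\de_i;\F)$ forces $\Ac(\Om_{i,m})\geq \Ac(\Om_i)-\ep_i$, but by construction $\Ac(\Om_{i,m}) \leq \Ac(\Om_i) - \eta t_0/2$, so $\ep_i \geq \eta t_0/2$. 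Since $t_0$ is fixed while $\ep_i\to 0$, this is a contradiction for large $i$.

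The main technical point is ensuring all constants (the decay rate $\eta$, the interval length $t_0$, the quadratic remainder $C_0$, and the $\F$-Lipschitz constant) are uniform in $i$; this uniformity comes cleanly from the uniform mass bound $\sup_i \M(\partial\Om_i)<\infty$ implied by $\mF$-convergence, so I do not anticipate any serious obstacle beyond bookkeeping.
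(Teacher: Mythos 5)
Your proposal is correct and takes essentially the same approach as the paper: argue by contradiction, pick a vector field $X$ for which $\de\Ac(X)$ is strictly negative on all nearby $\Om$, flow by $X$ for a uniform time to drop $\Ac$ by a fixed amount, and discretize the flow to violate the $c$-almost minimizing property. The only cosmetic difference is how the uniformity along the flow is established: the paper fixes an $\mF$-neighborhood $U_{2\ep_1}(V)$ on which $\de\Ac(X) \leq -\ep_0/2\int|X|d\mu_V$ and keeps the trajectory in that neighborhood, whereas you control the drift of the first variation by a Lipschitz estimate $|[\de\Ac|_{\phi_s(\Om_i)}](X) - [\de\Ac|_{\Om_i}](X)| \leq C_2 s$ using the uniform mass bound; both give the same uniform time interval $[0,t_0]$.
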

\begin{proof}
Suppose by contradiction that $V$ does not have $c$-bounded first variation, then there exist $\ep_0>0$ and a smooth vector field $X\in\X(U)$ compactly supported in $U$, such that
\[ \left|\int_{G_n(M)} div_S X(x) dV(x, S)\right|\geq (c+\ep_0)\int_M|X|d\mu_V>0. \]
By changing the sign of $X$ if necessary, we have
\[ \int_{G_n(M)} div_S X(x) dV(x, S)\leq -(c+\ep_0)\int_M |X|d\mu_V. \]
By continuity, we can find $\ep_1>0$ small enough depending only on $\ep_0, V, X$, such that if $\Om\in \C(M)$ with $\mF(|\partial\Om|, V)<2\ep_1$, then 
\[ \de\Ac|_\Om(X) \leq \int_{\partial\Om} div_{\partial\Om} X d\mu_{\partial\Om} + c\int_{\partial\Om} |X|d\mu_{\partial\Om}\leq -\frac{\ep_0}{2}\int_{M} |X| d\mu_{V} <0. \]

If $\mF(|\partial\Om|, V)<\ep_1$, then by deforming $\Om$ along the $1$-parameter flow $\{\Phi^X(t): t\in[0, \tau)\}$ of $X$ for a uniform short time $\tau>0$, we can obtain a $1$-parameter family $\{\Om_t\in\C(M): t\in[0, \tau)\}$, such that $t\to \partial\Om_t$ is continuous under the $\F$-topology, with $\spt(\Om_t-\Om)\subset U$, $\mF(|\partial\Om_t|, V)<2\ep_1$ and $\Ac(\Om_t)\leq \Ac(\Om_0)=\Ac(\Om)$ for all $t\in[0, \tau)$, but with $\Ac(\Om_{\tau})\leq \Ac(\Om)-\ep_2$ for some $\ep_2>0$ depending only on $\ep_0, \ep_1, V, X$. 

Summarizing the above, given any $\ep<\min\{\ep_1, \ep_2\}$ and $\de>0$, if $\Om\in\C(M)$ and $\mF(|\partial\Om|, V)<\ep$, then $\Om\notin \sA^c_n(U; \ep, \de; \F)$; this contradicts the $c$-almost minimizing property of $V$.
\end{proof}

We will need the following equivalence result among several almost minimizing concepts using the three different topology. In particular, we can actually use the $\M$-norm instead at the expense of shrinking the open subset $U \subset M$.
\begin{proposition}
\label{P:def-equiv}
Given $V\in \V_n(M)$, then the following statements satisfy $(a)\Longrightarrow (b)\Longrightarrow (c)\Longrightarrow (d)$:
\begin{itemize}
\item[$(a)$]  $V$ is $c$-almost minimizing in $U$;
\item[$(b)$]  For any $\ep>0$, there exists $\de>0$ and $\Om \, \in \, \sA^c_n(U; \ep, \de; \mF)$ such that $\mF(V, |\partial \Om|)<\ep$;
\item[$(c)$]  For any $\ep>0$, there exists $\de>0$ and $\Om \in \sA^c_n(U; \ep, \de; \M)$ such that $\mF(V, |\partial\Om|)<\ep$;
\item[$(d)$] $V$ is $c$-almost minimizing in $W$ for any relatively open subset $W \subset \subset U$.
\end{itemize}
\end{proposition}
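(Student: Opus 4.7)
The goal is to establish the chain $(a)\Rightarrow(b)\Rightarrow(c)\Rightarrow(d)$. The first two implications are elementary consequences of comparisons between $\F$, $\mF$ and $\M$ on classes of uniformly bounded mass, while $(c)\Rightarrow(d)$ is the substantive step, requiring an interpolation argument to convert $\F$-fine competitors in $W$ into $\M$-fine competitors in the larger $U$.

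For $(a)\Rightarrow(b)$, I would use that $\F \leq \mF$ on currents, so any $\mF$-$\de$-fine sequence is automatically $\F$-$\de$-fine. This gives the inclusion $\sA^c_n(U;\ep,\de;\F)\subset\sA^c_n(U;\ep,\de;\mF)$, so the $\Om_i$ provided by $(a)$ with $\ep_i\leq\ep$ already satisfy $(b)$ with $\de=\de_i$. For $(b)\Rightarrow(c)$, along any competitor the constraint $\Ac(\Om_i)\leq \Ac(\Om)+\de$ yields the uniform mass bound $\M(\partial\Om_i)\leq \Ac(\Om)+\de+c\vol(M)$. On this bounded-mass class the $\mF$-metric is controlled by a modulus of continuity of the $\M$-norm (the flat part via $\F\leq\M$, and the varifold part since $\M$-convergence of integer rectifiable currents forces varifold convergence). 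Choosing $\de''$ small so that this modulus at $\de''$ does not exceed $\de$ yields $\sA^c_n(U;\ep,\de;\mF)\subset\sA^c_n(U;\ep,\de'';\M)$, and the same $\Om$ given by $(b)$ now satisfies $(c)$.

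For $(c)\Rightarrow(d)$, fix $W\subset\subset U$ with $d_0=\dist(W,\partial U)>0$, and take $\Om \in \sA^c_n(U;\ep,\de;\M)$ from $(c)$. I will show that $\Om\in\sA^c_n(W;\ep,\de';\F)$ for $\de'$ sufficiently small, from which $(d)$ follows upon letting the parameters tend to zero. Given an $\F$-$\de'$-fine sequence $\Om=\La_0,\La_1,\ldots,\La_m$ in $W$ with $\Ac(\La_j)\leq \Ac(\Om)+\de'$, I interpolate between each consecutive pair $(\La_j,\La_{j+1})$ as follows. By the isoperimetric inequality for the flat norm, pick an integer-rectifiable filling $R_j\in\bI_{n+1}(M)$ with $\partial R_j=\partial\La_{j+1}-\partial\La_j$ and $\M(R_j)\lesssim \de'$; for $\de'$ small the buffer $d_0$ ensures $\spt R_j\subset U$. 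Slicing $R_j$ by level sets of a smooth function and applying a Fubini/good-slice argument produces a chain $\La_j=\Theta_0,\ldots,\Theta_N=\La_{j+1}$ with $\M(\partial\Theta_{k+1}-\partial\Theta_k)\leq\de$ and $\Ac(\Theta_k)\leq\max(\Ac(\La_j),\Ac(\La_{j+1}))+\eta(\de')$, where $\eta(\de')\to 0$. Concatenating these interpolations across $j$ gives a single $\M$-$\de$-fine sequence in $U$ from $\Om$ to $\La_m$ whose intermediate $\Ac$ is at most $\Ac(\Om)+\de$ (provided $\de'+\eta(\de')\leq\de$). The $\M$-almost minimizing property of $\Om$ then forces $\Ac(\La_m)\geq \Ac(\Om)-\ep$, establishing $\Om\in\sA^c_n(W;\ep,\de';\F)$.

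The main obstacle is the interpolation lemma in $(c)\Rightarrow(d)$, namely constructing $\Theta_k$ with simultaneous $\M$-fineness and $\Ac$-control. Mass fineness of the slices follows from a standard good-slice Fubini argument applied to the mass-bounded filling $R_j$; the $\Ac$-control additionally requires bounding $-c(\vol(\Theta_{k+1})-\vol(\Theta_k)) = \pm c\,\M(R_j^{\mathrm{slice}})$, which is automatic once each slice has $\M\leq\de$. This is the $\Ac$-analogue of the classical interpolation lemma of Pitts for the area functional, essentially unchanged from the minimal case modulo the extra bookkeeping of the $c\vol$ term.
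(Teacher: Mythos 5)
Your overall strategy matches the paper's: $(a)\Rightarrow(b)\Rightarrow(c)$ are routine metric comparisons, and $(c)\Rightarrow(d)$ reduces to an interpolation converting an $\F$-fine competitor chain supported in $W$ into an $\M$-fine chain supported in $U$ with controlled $\Ac$. The paper carries out this last step by invoking Lemma~\ref{L:interpolation1}, the $\Ac$-adapted Caccioppoli-set version of Pitts's interpolation lemma.

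The genuine gap is in your proposed mechanism for that interpolation. Slicing the filling $R_j$ by level sets of a smooth $f$ gives $\Theta_t=(\La_j\cap\{f\geq t\})\cup(\La_{j+1}\cap\{f< t\})$, and a Fubini/good-slice choice of levels does control the step-to-step quantity $\M(\partial\Theta_{t_{k+1}}-\partial\Theta_{t_k})$. But it gives no control whatsoever on the \emph{intermediate area} $\M(\partial\Theta_t)$, which is what governs $\Ac(\Theta_t)$. One has $\M(\partial\Theta_t)\approx\M\bigl(\partial\La_j\lc\{f>t\}\bigr)+\M\bigl(\partial\La_{j+1}\lc\{f<t\}\bigr)+\M(\text{slice})$; if $\partial\La_j$ concentrates in $\{f>t\}$ while $\partial\La_{j+1}$ concentrates in $\{f<t\}$ --- entirely compatible with both $\F(\partial\La_j-\partial\La_{j+1})$ and $\vol(\La_j\lap\La_{j+1})$ being arbitrarily small --- then $\M(\partial\Theta_t)$ can be close to $\M(\partial\La_j)+\M(\partial\La_{j+1})$, far above the required bound $\max\{\M(\partial\La_j),\M(\partial\La_{j+1})\}+\eta$, and so $\Ac(\Theta_t)$ overshoots. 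You bookkeep only the $c\vol$ piece of $\Ac$, implicitly taking the area control as coming ``for free from Pitts''; but Pitts's interpolation is not a level-set slicing. It is a cut-and-paste in finitely many small geodesic balls covering $\La_j\lap\La_{j+1}$, where the isoperimetric inequality within each small ball caps the intermediate area gain --- that device is exactly what produces (ii),(iv),(v) of Lemma~\ref{L:interpolation1}, and carrying it out through boundaries of Caccioppoli sets rather than general cycles is a genuine technical subtlety that the paper explicitly flags. A secondary slip: smallness of $\M(R_j)$ does not by itself confine $\spt R_j$ to $U$ via the ``buffer $d_0$''; rather, once $\F(\partial\La_{j+1}-\partial\La_j)$ is below the ambient isoperimetric threshold, the minimal filling is forced to be $R_j=\La_{j+1}-\La_j$ itself, which lies in $W$ by hypothesis.
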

\begin{remark}
The proof was originally due to Pitts \cite[Theorem 3.9]{P81}. In our context, we work with boundaries instead of general integral currents. Furthermore, in Definition \ref{D:c-am-varifolds}(iii), we use the $\Ac$ functional instead of the mass $\M$. 
\end{remark}
\begin{proof}
It is easy to see $(a)\Longrightarrow (b)\Longrightarrow (c)$. The last implication $(c)\Longrightarrow (d)$ is an interpolation process which was originally established in Pitts \cite[Proposition 3.8]{P81} using integral cycles. The corresponding interpolation process using boundaries of Caccioppoli sets was obtained by the first author in \cite[Proposition 5.3]{Zhou15b}. The detailed description of this process is given in Lemma \ref{L:interpolation1} in Appendix \ref{A:An interpolation lemma}.
\end{proof}

\begin{definition}
\label{D:am-annuli}
A varifold $V \in \V_n(M)$ is said to be \emph{$c$-almost minimizing in small annuli} if for each $p\in M$, there exists $r_{am}(p) >0$ such that $V$ is $c$-almost minimizing in $A_{s, r}(p)\cap M$ for all $0<s<r\leq r_{am}(p)$, where $A_{s, r}(p)=B_r(p)\backslash B_s(p)$.
\end{definition}

\begin{theorem}[Existence of $c$-almost minimizing varifold] 
\label{T: existence of almost minimizing varifold}
Let $\Pi \in \pi^{\sharp}_1\big(\C(M, \M), \{0\}\big)$, and assume that $\bL^c(\Pi)>0$. There exists a nontrivial $V\in\V_n(M)$, such that
\begin{itemize}
\item[(i)] $V\in C(S)$ for some critical sequence $S$ of $\Pi$;
\item[(ii)] $V$ has $c$-bounded first variation;
\item[(iii)] $V$ is $c$-almost minimizing in small annuli. 
\end{itemize}
\end{theorem}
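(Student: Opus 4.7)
The plan is to follow the Almgren--Pitts combinatorial scheme, adapted to the $\Ac$-functional and to sweepouts by Caccioppoli sets. I would begin with the critical sequence $S=\{\phi_i\}$ provided by the tightening Proposition \ref{P:tightening}, so that every $V\in C(S)$ already has $c$-bounded first variation. Properties (i) and (ii) will then be automatic for any $V\in C(S)$ that we extract, and the real work is producing some $V\in C(S)$ that is also $c$-almost minimizing in small annuli. I argue by contradiction: assume that for every $V\in C(S)$ there is a point $p_V\in\spt\|V\|$ such that $V$ fails to be $c$-almost minimizing in $A_{s,r}(p_V)$ for some sequence $0<s<r\to 0$.

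Using Proposition \ref{P:def-equiv}, failure of the $c$-almost minimizing property in an annulus $A_{s,r}$ is equivalent, via the $\mF$-metric version, to the existence of $\ep>0$ such that for every $\de>0$ and every $\Om$ with $\mF(|\partial\Om|,V)<\ep$ there is a sequence $\Om=\Om_0,\dots,\Om_m$ supported in $A_{s,r}$ with $\mF(\partial\Om_{i+1},\partial\Om_i)<\de$, $\Ac(\Om_i)\leq\Ac(\Om)+\de$, and $\Ac(\Om_m)<\Ac(\Om)-\ep$. The idea is to use these $\Ac$-reducing deformations to construct, from $S$, a competitor sequence $\widetilde{S}\in\Pi$ with $\bL^c(\widetilde S)<\bL^c(\Pi)$, which is the sought contradiction.

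To globalize, I would run Pitts' combinatorial covering argument. For each cell $\al\in I(1,k_i)_1$, one looks at the maximum of $\Ac\circ\phi_i$ on the vertices of $\al$ and attempts to decrease it. For a single varifold $V$ the failure above produces deformations on a specific annulus $A_{s,r}(p_V)$; the point is that any two distinct candidate varifolds $V,V'$ (or, for a single $V$, two distinct bad points $p,p'$) admit \emph{pairs} of disjoint small annuli, so the individual $\F$-replacements of $\Om$ inside these annuli can be composed without interfering. Using that $C(S)$ is compact under the $\mF$-metric (inherited from $A^L$), I choose finitely many pairs $(V^j,p^j)$ and disjoint annuli $A_{s_j,r_j}(p^j)$ controlling the whole critical set; then for each $i$ large and each vertex $x\in I(1,k_i)_0$ whose image lies $\mF$-close to one of the $V^j$, I splice in the deformation in the corresponding annulus, combining the intermediate steps via the interpolation Lemma \ref{L:interpolation1} of Proposition \ref{P:def-equiv} to keep the fineness small. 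The result is a homotopic sequence in $\Pi$ whose maximum $\Ac$-value is strictly less than $\bL^c(\Pi)$, giving the contradiction.

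The main obstacle is exactly this combinatorial step: making the annular deformations, which are defined pointwise for individual $V^j$'s, into a \emph{global} map on the discrete parameter space $I(1,k_i)_0$ with small fineness, while respecting the constraint that the values remain in $\C(M)$ with boundaries as currents. Two subtle issues are (a) handling vertices whose image is not yet $\mF$-close to $C(S)$, which one deals with by observing that the $\Ac$-value on such vertices is uniformly bounded below the min-max value and so requires no deformation, and (b) controlling the fineness of the interpolated map, for which one invokes the $\M$-version of almost-minimizing via Proposition \ref{P:def-equiv}(c) together with the discrete-to-continuous interpolation in Appendix \ref{A:An interpolation lemma}. Once this combinatorial argument is complete, the extracted $V$ automatically lies in $C(S)$, has $c$-bounded first variation by tightening, and is $c$-almost minimizing in small annuli by construction, establishing (i)--(iii). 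Nontriviality follows from $\bL^c(\Pi)>0$ and the isoperimetric lower bound in Theorem \ref{T:Isoperimetric areas}, since $\|V\|(M)\geq \bL^c(\Pi)/1+c\vol(M)$-type estimates prevent $V=0$.
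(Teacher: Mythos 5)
Your proposal follows essentially the same route as the paper: start from the tightened critical sequence of Proposition \ref{P:tightening} (so (ii) is automatic for any $V\in C(S)$), argue by contradiction assuming every $V\in C(S)$ fails to be $c$-almost minimizing in arbitrarily small annuli around some point, pass to the mass-norm notion of almost-minimizing via Proposition \ref{P:def-equiv}, and invoke the Pitts combinatorial deformation argument to produce a homotopic competitor $\widetilde{S}\in\Pi$ with $\bL^c(\widetilde{S})<\bL^c(\Pi)$, contradicting criticality of $S$. The paper simply cites \cite[4.10]{P81} (adapted by replacing $\M$ with $\Ac$) for the combinatorial step, whereas you sketch that argument in more detail, but the logical skeleton is identical.
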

\begin{proof}
First we can pick a critical sequence $S$ of $\Pi$ which has been pulled-tight by Proposition \ref{P:tightening}, so that every ${V}\in C(S)$ has $c$-bounded first variation. Suppose for the sake of contradiction that for each ${V}\in C(S)$, there exists a $p\in M$, such that there are arbitrarily small annuli centered at $p$ on which ${V}$ is not $c$-almost minimizing. Then by Proposition \ref{P:def-equiv}, $V$ is also not $c$-almost minimizing with respect to the mass norm on these annuli (i.e. $\nu=\M$). 

Specifically, for any $\ti{r}>0$, there exists $r, s>0$ with $\ti{r}>r+2s>r-2s>0$, and $\ep>0$, such that for any $\de>0$, and $\Om\in\C(M)$ with $\mF(|\partial\Om|, V)<\ep$, then $\Om\notin \sA^c_n(A_{r-2s, r+2s}(p)\cap M; \ep, \de; \M)$. Now using the same argument as in \cite[4.10]{P81} by changing the mass functional $\M$ to the $\Ac$-functional, one can construct a new $1$-homotopic sequence $\ti{S}$ which is homotopic to $S$, and $\bL^c(\ti{S})<\bL^c(S)$; but this contradicts the criticality of $S$. 
\end{proof}

Now we formulate and solve a natural constrained minimization problem which will be used in the construction of $c$-replacements.
\begin{lemma}[A constrained minimization problem]
\label{L:minimisation}
Given $\ep, \de>0$, $U\subset M$ and any $\Om \in \sA^c_n(U;\ep,\de;\F)$, fix a compact subset $K\subset U$. Let $\C_\Om$ be the set of all $\La\in\C(M)$ such that there exists a sequence $\Om=\Om_0, \Om_1, \cdots, \Om_m=\La$ in $\C(M)$ satisfying:
 \begin{itemize}
\item[(a)] $\spt(\Om_i-\Om)\subset K$;
\item[(b)] $\F(\partial\Om_i - \partial\Om_{i+1})\leq \de$;
\item[(c)] $\Ac(\Om_i)\leq \Ac(\Om)+\de$, for $i=1, \cdots, m$.
\end{itemize}

Then there exists $\Omega^* \in \C(M)$ such that:
\begin{itemize}
\item[(i)] $\Om^* \in \C_\Om$, and \[ \Ac(\Om^*)=\inf\{\Ac(\La):\ \La\in\C_\Om\},\]
\item[(ii)] $\Om^*$ is locally $\Ac$-minimizing in $\interior(K)$,
\item[(iii)] $\Om^*\in \sA^c_n(U;\ep,\de;\F)$.
\end{itemize}
\end{lemma}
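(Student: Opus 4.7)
The plan is to prove the three assertions in sequence, using a unifying idea throughout: concatenating discrete sequences in $\C(M)$ and controlling the resulting flat-norm step sizes via the elementary bound $\F(\partial\La-\partial\La')\leq \M(\La-\La')$ for Caccioppoli sets whose symmetric difference is contained in a small region.

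For (i), I would take any minimizing sequence $\{\La_k\}\subset \C_\Om$. The support constraint $\spt(\La_k-\Om)\subset K$ is uniform, and the bound $\Ac(\La_k)\leq \Ac(\Om)+\de$ together with $\vol(\La_k)\leq \vol(M)$ gives a uniform bound on $\M(\partial\La_k)$. Standard BV compactness for finite perimeter sets (\cite[Theorem 6.3]{Si83}) then yields $\La_k\to\Om^*$ in $L^1$ for some $\Om^*\in\C(M)$ with $\spt(\Om^*-\Om)\subset K$, and lower semicontinuity of perimeter gives $\Ac(\Om^*)\leq\inf_{\C_\Om}\Ac$. To place $\Om^*$ in $\C_\Om$ itself rather than only its closure, I append it as a final term to the sequence realizing $\La_k\in\C_\Om$ for $k$ large: the resulting step $\F(\partial\La_k-\partial\Om^*)\leq \M(\La_k-\Om^*)$ is less than $\de$ once $k$ is sufficiently large, the support constraint is preserved, and the energy bound is automatic since $\Ac(\Om^*)\leq\Ac(\Om)$ (as $\Om$ is itself in $\C_\Om$). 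Combined, $\Om^*$ realizes the infimum, establishing (i).

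For (ii), suppose towards contradiction that at some $p\in\interior(K)$ there is a competitor $\La$ with $\spt(\La-\Om^*)\subset B_r(p)$ and $\Ac(\La)<\Ac(\Om^*)$, where $r$ is chosen small enough that $\overline{B_r(p)}\subset\interior(K)$ and $\vol(B_r(p))<\de$. Appending $\La$ to any sequence from $\Om$ to $\Om^*$ realizing $\Om^*\in\C_\Om$, I would verify the three conditions defining $\C_\Om$ for the extended sequence directly, using $\F(\partial\La-\partial\Om^*)\leq \vol(B_r(p))<\de$ and $\Ac(\La)<\Ac(\Om^*)\leq\Ac(\Om)+\de$. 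This places $\La$ in $\C_\Om$ with strictly smaller $\Ac$, contradicting (i). For (iii), given any sequence from $\Om^*$ satisfying the defining conditions of $\sA^c_n(U;\ep,\de;\F)$, I would prepend the sequence from $\Om$ to $\Om^*$ that witnesses $\Om^*\in\C_\Om$. The concatenated sequence starts at $\Om$, remains supported in $K\cup U=U$, has consecutive flat-norm steps at most $\de$, and energy bounded by $\Ac(\Om)+\de$ (using $\Ac(\Om^*)\leq\Ac(\Om)$ on the appended segment). Applying $\Om\in\sA^c_n(U;\ep,\de;\F)$ then yields $\Ac(\Om^*_m)\geq\Ac(\Om)-\ep\geq\Ac(\Om^*)-\ep$, as required.

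The main subtlety I expect is in (i): verifying that the $L^1$-limit $\Om^*$ actually lies in the discrete admissible class $\C_\Om$ and is not merely an accumulation point of it. This hinges on the flat-norm step threshold $\de$ being large enough to absorb the passage to the limit, which is exactly what $\F(\partial(\La-\La'))\leq \M(\La-\La')$ provides once $L^1$-convergence has taken effect. The remainder of the argument is a routine bookkeeping of concatenations, and beyond lower semicontinuity of perimeter no variational or regularity estimates are needed at this stage.
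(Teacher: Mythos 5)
Your argument is correct and follows the same three-part scheme as the paper's proof: extract a minimizing sequence and use compactness plus lower semicontinuity for (i), append a hypothetical better local competitor to derive a contradiction for (ii), and prepend the admissible path from $\Om$ to $\Om^*$ for (iii). The only (minor) deviation is in (ii), where you bound the flat-norm step by $\F(\partial\La-\partial\Om^*)\leq\M(\La-\Om^*)\leq\vol(B_r(p))$ directly, whereas the paper instead estimates $\M(\partial\La-\partial\Om^*)<\de$ via $c\vol(\sB_r(p))<\de/4$ and $\M(\partial\Om^*\lc\sB_r(p))<\de/4$; your route is a little cleaner but the two are substantively equivalent.
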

\begin{proof}
\textit{Proof of (i):} Take any minimizing sequence $\{\La_j\}\subset\C_{\Om}$, i.e.
\[ \lim_{j \to \infty} \Ac(\La_j) = \inf\{\Ac(\La):\ \La\in\C_{\Om}\}.\]
Notice that $\spt(\La_j-\Om) \subset K$ and $\Ac(\La_j) \leq \Ac(\Om) + \de$ for all $j$. By standard compactness \cite[Theorem 6.3]{Si83}, after passing to a subsequence, $\partial\La_j$ converges weakly to some $\partial\Om^*$ with $\Om^* \in\C(M)$ and $\spt(\Om^*-\Om) \subset K$.
We will show that $\Om^*$ is our desired minimizer. Since $\partial\La_j$ converges weakly to $\partial\Om^*$, we have that $\mH^n(\partial\Om^*)\leq \lim_{j\to\infty}\mH^n(\partial\La_j)$ and $\mH^{n+1}(\Om^*)=\lim_{j\to\infty}\mH^{n+1}(\La_j)$. Therefore,
\begin{equation}
\label{E:Om^*}
\Ac(\Om^*) \leq  \inf\{\Ac(\La):\ \La\in\C_{\Om}\}.
\end{equation}
It remains to show that $\Om^*\in \C_{\Om}$. For $j$ sufficiently large, we have $\F(\partial\La_j-\partial\Om^*)<\de$. Since $\La_j \in \C_{\Om}$, there exists a sequence $\Om=\Om_0, \Om_1, \cdots, \Om_m=\La_j$ in $\C(M)$ satisfying conditions (a-c) above. Consider now the sequence $\Om=\Om_0, \Om_1, \cdots, \Om_m=\La_j, \Om_{m+1}=\Om^*$ in $\C(M)$; it trivially satisfies conditions (a) and (b). Moreover, using (\ref{E:Om^*}), we also have
\[ \Ac(\Om^*)\leq \Ac(\La_j)\leq \Ac(\Om)+\de.\]
Therefore, $\Om^*\in \C_{\Om}$ and hence (i) has been proved. 

\textit{Proof of (ii):} For $p \in \interior(K)$, we claim that there exists a small $\sB_r(p) \subset \interior(K)$ such that
\begin{equation}
\label{E:Om^*2}
\Ac(\Om^*)\leq \Ac(\La),
\end{equation}
for any $\La\in\C(M)$ with $\spt(\La-\Om^*)\subset \sB_r(p)$. To establish (\ref{E:Om^*2}), first choose $r>0$ small so that $c\cdot \vol(\sB_r(p))<\de/4$ and $\M(\partial\Om^* \lc \sB_r(p)) < \de/4$ (this is possible since $\partial\Om^*$ is rectifiable). Suppose (\ref{E:Om^*2}) were false, then there exists $\Om'\in\C(M)$ with $\spt(\Om'-\Om^*)\subset \sB_r(p)$ such that $\Ac(\Om')<\Ac(\Om^*)$. We will show that $\Om'\in\C_{\Om}$, which contradicts that $\Om^*$ is a minimizer from part (i). 

To see that $\Om' \in \C_\Om$, take a sequence $\Om=\Om_0, \Om_1, \cdots, \Om_m=\Om^*$ in $\C(M)$ satisfying (a-c) above, and append $\Om_{m+1}=\Om'$ to the sequence. Since $\spt(\Om^*-\Om) \subset K$ and $\spt(\Om'-\Om^*) \subset K$, we have $\spt(\Om'-\Om)\subset K$. 
By the facts that $\spt(\Om'-\Om^*)\subset \sB_r(p)$ and $\Ac(\Om')<\Ac(\Om^*)$, we have
\begin{displaymath}
\begin{split}
\M(\partial \Om'\lc \sB_r(p)) & \leq \M(\partial\Om^*\lc \sB_r(p))+c\big[\mH^{n+1}(\Om^*\cap \sB_r(p))+\mH^{n+1}(\Om'\cap \sB_r(p))\big]\\
                                            & \leq \M(\partial\Om^*\lc \sB_r(p))+2 c\vol(\sB_r(p));
\end{split}
\end{displaymath}
hence
\[ \M(\partial \Om^{\pr}-\partial \Om^*)\leq \M(\partial \Om'\lc \sB_r(p)) +\M(\partial \Om^*\lc \sB_r(p)) <\de. \]
So $\F(\partial\Om'-\partial\Om^*) \leq \M(\partial \Om'-\partial \Om^*)<\de$. Finally note $\Ac(\Om') < \Ac(\Om^*) \leq \Ac(\Om)+ \de$. Therefore $\Om'\in\C_{\Om}$, and this proves part (ii). 

\textit{Proof of (iii):} Suppose that the claim is false. Then by Definition \ref{D:c-am-varifolds} there exists a sequence $\Om^*=\Om^*_0, \Om^*_1, \cdots, \Om^*_\ell $ in $\C(M)$ satisfying
\begin{itemize}
\item $\spt(\Om^*_i-\Om^*)\subset U$;
\item $\F(\partial\Om^*_i - \partial\Om^*_{i+1})\leq \de$;
\item $\Ac(\Om^*_i)\leq \Ac(\Om^*)+\de$, for $i=1, \cdots, \ell$,
\end{itemize}
but $\Ac(\Om^*_\ell) < \Ac(\Om^*)-\ep$. Since $\Om^* \in \C_\Om$ by part (i), there exists a sequence $\Om=\Om_0, \Om_1, \cdots, \Om_m=\Om^*$ satisfying conditions (a-c) above. Then the sequence $\Om=\Om_0,\Om_1,\cdots, \Om_m, \Om^*_1, \cdots, \Om^*_\ell$ in $\C(M)$ still satisfies those conditions (a-c), since $\Ac(\Om^*)\leq \Ac(\Om)$ implies that $\Ac(\Om^*_i)\leq \Ac(\Om)+\de$. Therefore $\Om\in \sA^c_n(U;\ep,\de;\F)$ implies that $\Ac(\Om^*_\ell)\geq \Ac(\Om)-\ep\geq \Ac(\Om^*)-\ep$, which is a contradiction. This proves part (iii). 
\end{proof}


\begin{proposition}[Existence and properties of replacements]
\label{P:good-replacement-property}
Let $V\in\V_n(M)$ be $c$-almost minimizing in an open set $U \subset M$ and $K \subset U$ be a compact subset, then there exists $V^{*}\in \V_n(M)$, called \emph{a $c$-replacement of $V$ in $K$} such that
\begin{enumerate}
\item[(i)] $V\lc (M\backslash K) =V^{*}\lc (M\backslash K)$;
\item[(ii)] $-c \vol(K)\leq \|V\|(M)-\|V^{*}\|(M) \leq c \vol(K)$;
\item[(iii)] $V^{*}$ is $c$-almost minimizing in $U$;
\item[(iv)] moreover, $V^{*} =\lim_{i \to \infty} |\partial\Om^*_i|$ as varifolds for some $\Om^*_i\in\C(M)$ such that $\Om^*_i\in \sA^c_n(U; \ep_i, \de_i; \F)$ with $\ep_i, \de_i \to 0$; furthermore $\Om^*_i$ locally minimizes $\Ac$ in $\interior(K)$;
\item[(v)] if $V$ has $c$-bounded first variation in $M$, then so does $V^*$.
\end{enumerate}
\end{proposition}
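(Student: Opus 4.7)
The plan is to realize $V^*$ as a subsequential varifold limit of the constrained minimizers produced by Lemma \ref{L:minimisation}. Since $V$ is $c$-almost minimizing in $U$, pick $\ep_i,\de_i\to 0$ and $\Om_i\in\sA^c_n(U;\ep_i,\de_i;\F)$ with $\mF(|\partial\Om_i|,V)\leq\ep_i$. For each $i$, apply Lemma \ref{L:minimisation} with compact set $K$ to obtain $\Om_i^*\in\C_{\Om_i}$ that minimizes $\Ac$ on $\C_{\Om_i}$, satisfies $\spt(\Om_i^*-\Om_i)\subset K$, locally $\Ac$-minimizes on $\interior(K)$, and still lies in $\sA^c_n(U;\ep_i,\de_i;\F)$. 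The bound $\Ac(\Om_i^*)\leq\Ac(\Om_i)$ together with $|\vol(\Om_i^*)-\vol(\Om_i)|\leq\vol(K)$ yields a uniform mass bound on $\partial\Om_i^*$, so compactness of integral currents \cite[Theorem 6.3]{Si83} together with compactness of varifolds of bounded mass allows us, after passing to a subsequence, to obtain $|\partial\Om_i^*|\to V^*$ as varifolds for some $V^*\in\V_n(M)$.

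Items (i), (iii), and (iv) follow quickly. Outside $K$ the sets $\Om_i$ and $\Om_i^*$ agree, so $|\partial\Om_i^*|\lc(M\setminus K)=|\partial\Om_i|\lc(M\setminus K)$, and passing to the weak limit gives $V^*\lc(M\setminus K)=V\lc(M\setminus K)$, which is (i). After possibly thinning the subsequence so that $\mF(|\partial\Om_i^*|,V^*)\to 0$ and slightly enlarging $\ep_i$, Definition \ref{D:c-am-varifolds} applied to $\{\Om_i^*\}$ gives (iii). The local $\Ac$-minimization demanded by (iv) is a direct output of Lemma \ref{L:minimisation}(ii).

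For the mass bound (ii) the plan is to sandwich $\Ac(\Om_i^*)$. The upper bound $\Ac(\Om_i^*)\leq\Ac(\Om_i)$ holds because $\Om_i\in\C_{\Om_i}$ is a valid competitor; for the lower bound, the sequence in the definition of $\C_{\Om_i}$ leading to $\Om_i^*$ meets the hypotheses of Definition \ref{D:c-am-varifolds} for $\Om_i\in\sA^c_n(U;\ep_i,\de_i;\F)$, so $\Ac(\Om_i^*)\geq\Ac(\Om_i)-\ep_i$. Converting between $\Ac$ and $\M(\partial\cdot)$ using $|\vol(\Om_i^*)-\vol(\Om_i)|\leq\vol(K)$ will give
$$\M(\partial\Om_i)-c\vol(K)-\ep_i\leq\M(\partial\Om_i^*)\leq\M(\partial\Om_i)+c\vol(K).$$
Taking the limit and using that convergence in Pitts's $\mF$-metric implies convergence of total mass (testing against the constant function $1\in C_c^1(G_n(M))$) then yields $|\,\|V^*\|(M)-\|V\|(M)|\leq c\vol(K)$. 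This mass transfer is the main obstacle, since weak varifold convergence alone is only lower semicontinuous in mass; the almost-minimizing definition and the sandwich are exactly what enables passing to the limit in both directions.

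Finally, for (v), Lemma \ref{L:c-am implies c-bd-first-variation} applied to $V^*$ (which is $c$-almost minimizing in $U$) gives $c$-bounded first variation on $U$, while (i) transfers the property on $M\setminus K$ directly from $V$. To globalize, fix $X\in\X(M)$ and a smooth partition of unity $1=\phi+\psi$ subordinate to the open cover $\{U,M\setminus K\}$; then
$$|\de V^*(X)|\leq|\de V^*(\phi X)|+|\de V^*(\psi X)|\leq c\int(\phi+\psi)|X|\,d\mu_{V^*}=c\int|X|\,d\mu_{V^*},$$
as desired.
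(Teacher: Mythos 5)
Your proposal is correct and follows essentially the same route as the paper's proof: take the constrained minimizers $\Om_i^*$ from Lemma \ref{L:minimisation}, pass to the varifold limit $V^*$, and deduce (i)--(iv) from the properties of $\Om_i^*$ and the $\Ac$-sandwich $\Ac(\Om_i)-\ep_i\leq\Ac(\Om_i^*)\leq\Ac(\Om_i)$. Your explicit partition-of-unity computation in (v) is exactly the ``standard cutoff trick'' the paper alludes to without spelling out.
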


\begin{proof}
Let $V\in \V_n(M)$ be $c$-almost minimizing in $U$. By definition there exists a sequence $\Om_i\in \sA^c_n(U; \ep_i, \de_i; \F)$ with $\ep_i, \de_i \to 0$ such that $V$ is the varifold limit of $|\partial\Om_i|$. By Lemma \ref{L:minimisation} we can construct a $c$-minimizer $\Om_i^* \in \C_{\Om_i}$ for each $i$. Since $\M(\partial\Om_i^*)$ is uniformly bounded, by compactness there exists a subsequence $|\partial\Om_i^*|$ converging as varifolds to some $V^* \in \V_n(M)$. We claim that $V^*$ satisfies items (i)-(v) in Proposition \ref{P:good-replacement-property} and thus is our desired $c$-replacement. 
\begin{itemize}[leftmargin=0.8cm]
\item First, by part (i) of Lemma \ref{L:minimisation}, we have $\Om_i^* \in \C_{\Om_i}$ and thus $\spt(\Om_i^*- \Om_i) \subset K$. Hence the varifold limits satisfy $V^* \lc (M \backslash K) = V \lc (M \backslash K)$. 
\item Second, as $\Om_i \in \sA^c_n(U,\ep_i,\de_i; \F)$ and $\Om_i^* \in \C_{\Om_i}$, we have 
\[  \Ac(\Om_i)-\ep_i\leq \Ac(\Om_i^*)\leq \Ac(\Om_i);  \]
thus by (\ref{E: Ac}), 
\[ \M(\partial\Om_i)-c\mH^{n+1}(\Om_i)-\ep_i\leq \M(\partial\Om_i^*)-c\mH^{n+1}(\Om_i^*)\leq \M(\partial\Om_i)-c\mH^{n+1}(\Om_i). \] 
Note that $|\mH^{n+1}(\Om_i)-\mH^{n+1}(\Om_i^*)|\leq \vol(K)$; taking $i \to \infty$, we have $-c \vol(K)\leq \|V\|(M)-\|V^{*}\|(M) \leq c vol(K)$. 
\item Since each $\Om_i^* \in \sA^c_n(U; \ep_i, \de_i; \F)$ by Lemma \ref{L:minimisation}(iii), by definition $V^*$ is $c$-almost minimizing in $U$. 
\item (iv) follows from Lemma \ref{L:minimisation}(ii). 
\item Finally by (iii) and Lemma \ref{L:c-am implies c-bd-first-variation}, $V^*$ has $c$-bounded first variation in $U$. By (i) and a standard cutoff trick it is easy to show that $V^*$ has $c$-bounded first variation in $M$ whenever $V$ does. 
\end{itemize}
\end{proof}

\begin{lemma}[Regularity of $c$-replacement]
\label{L:reg-replacement}
Let $2 \leq n \leq 6$. Under the same hypotheses as Proposition \ref{P:good-replacement-property}, if $\Sigma=\spt \|V^*\| \cap \interior(K)$, then 
\begin{enumerate}
\item $\Sigma$ is a smooth, almost embedded, stable $c$-boundary;
\item the density of $V^*$ is $1$ along $\mR(\Si)$ and $2$ along $\mS(\Si)$;
\item the restriction of the $c$-replacement $V^* \lc \interior(K)=\Sigma$.
\end{enumerate}
\end{lemma}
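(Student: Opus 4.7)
The plan is to apply the regularity theorem for $\Ac$-minimizers to each approximate replacement $\Om_i^*$ and then pass to the varifold limit via the compactness theorem for almost embedded stable $c$-boundaries. Concretely, by Proposition \ref{P:good-replacement-property}(iv), each $\Om_i^*$ locally minimizes $\Ac$ in $\interior(K)$, so Theorem \ref{T:regularity of Ac minimizers} shows that $\partial\Om_i^*\cap\interior(K)$ is smooth and embedded away from a closed set of Hausdorff dimension at most $n-7$. Since $n\leq 6$, this singular set is empty, so $\partial\Om_i^*\cap\interior(K)$ is a smooth embedded hypersurface. The first variation formula \eqref{E: 1st variation for Ac} applied to local $\Ac$-minimization gives constant mean curvature equal to $c$ with respect to the outward unit normal, and the nonnegativity of the second variation yields stability. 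Hence each $\partial\Om_i^*\cap\interior(K)$ is a smooth, embedded, two-sided, stable $c$-boundary.

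Next I would establish a uniform area bound to feed into the compactness theorem. This follows since $\Om_i$ comes from a critical sequence with uniformly bounded mass, Proposition \ref{P:good-replacement-property}(ii) gives $\M(\partial\Om_i^*)\leq \M(\partial\Om_i)+c\vol(K)$, and hence $\sup_i\M(\partial\Om_i^*)<\infty$. With $c>0$ fixed, Theorem \ref{T:compactness}(i) together with (ii) then applies: up to a subsequence, $\partial\Om_i^*$ converges locally smoothly (with multiplicity) in $\interior(K)$ to an almost embedded, stable $c$-boundary $\Si_\infty$ whose density is $1$ along $\mR(\Si_\infty)$ and $2$ along $\mS(\Si_\infty)$.

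Finally, since by construction $V^*=\lim|\partial\Om_i^*|$ as varifolds, the locally smooth convergence with multiplicity identifies $V^*\lc\interior(K)$ with the integer-rectifiable varifold associated to $\Si_\infty$ equipped with the density values just described. Setting $\Si:=\Si_\infty$, this yields $\spt\|V^*\|\cap\interior(K)=\Si$, proving (1) and (3), while the density statement in (2) is exactly the density dichotomy from Theorem \ref{T:compactness}(ii).

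The main obstacle I anticipate is controlling the almost-embedded structure of the limit—in particular, ruling out anything worse than one-sided touching. This is precisely where the maximum principle Lemma \ref{L:classical MP} is invoked within Theorem \ref{T:compactness}, ensuring that nearby sheets of the smooth embedded $\partial\Om_i^*$ can only touch without crossing; the density-$2$ conclusion on $\mS(\Si)$ then follows via the boundary-plus-Constancy argument embedded in the proof of Theorem \ref{T:compactness}(ii). Without the one-sided maximum principle one could not rule out genuine transverse self-intersections in the limit, and the identification $V^*\lc\interior(K)=\Si$ with integer multiplicities $\{1,2\}$ would fail.
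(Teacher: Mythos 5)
Your proposal is correct and follows essentially the same route as the paper: apply the regularity theorem for $\Ac$-minimizers (Theorem \ref{T:regularity of Ac minimizers}) to each $\partial\Om_i^*$ via Proposition \ref{P:good-replacement-property}(iv), then pass to the limit using the compactness Theorem \ref{T:compactness}, whose part (ii) supplies the density dichotomy. One small citation slip: the uniform bound on $\M(\partial\Om_i^*)$ does not come from Proposition \ref{P:good-replacement-property}(ii) (which concerns the limiting varifolds $V$, $V^*$); it follows from the construction in Lemma \ref{L:minimisation}, namely $\Ac(\Om_i^*)\leq \Ac(\Om_i)+\de_i$ together with $\spt(\Om_i^*-\Om_i)\subset K$, which gives $\M(\partial\Om_i^*)\leq \M(\partial\Om_i)+c\vol(K)+\de_i$.
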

\begin{proof}
By the regularity for local minimizers of the $\Ac$ functional (Theorem \ref{T:regularity of Ac minimizers}), we know that each $\partial\Om^*_i$ is a smooth, embedded, stable $c$-boundary in $\interior(K)$ by Proposition \ref{P:good-replacement-property}(iv). The lemma then follows from the compactness Theorem \ref{T:compactness}.
\end{proof}

Using Proposition \ref{P:good-replacement-property}, we can obtain the following preliminary lemma. It will be essential for proving that various blowups of the $c$-min-max varifold are planar, particularly in the proposition to follow, and in Section \ref{S:Regularity for c-min-max varifold}. 
Given $p\in M, r>0$, let $\bleta_{p,r}: \R^L\to\R^L$ be the dilation defined by $\bleta_{p, r}(x)=\frac{x-p}{r}$.
\begin{lemma}
\label{L:blowup is regular}
Let $2\leq n\leq 6$, and $V\in\V_n(M)$ be a $c$-almost minimizing varifold in $U$. Given a sequence $p_i\in U$ with $p_i\to p\in U$ and, a sequence $r_i>0$ with $r_i\to 0$, let $\overline{V}=\lim (\bleta_{p_i, r_i})_\# V$ be the varifold limit. Then $\overline{V}$ is an integer multiple of some complete embedded minimal hypersurface $\Si$ in $T_p M$, and moreover, $\Si$ is proper.
\end{lemma}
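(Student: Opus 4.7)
The plan is to reduce this to the minimal case by noting that blowups eliminate the mean curvature to leading order. Specifically, the mean curvature rescales by the factor $r_i$ while the mass defect of the $c$-replacement is of order $r_i^{n+1}$, which is strictly higher than the area order $r_i^n$. I will show that $\overline V$ is stationary and admits minimal hypersurface replacements in every compact subset of $T_pM$, and then invoke the Colding--De Lellis good-replacement regularity theorem \cite{CD03} --- which is appreciably cleaner than the CMC analog of Section 6, since the two-sided classical maximum principle avoids the touching-set complications.

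First, since $V$ is $c$-almost minimizing in $U$, Lemma \ref{L:c-am implies c-bd-first-variation} gives $c$-bounded first variation. Under $\bleta_{p_i,r_i}$ the pushforward $(\bleta_{p_i,r_i})_\# V$ has $(cr_i)$-bounded first variation in the rescaled metric $g_i:=r_i^{-2}(\bleta_{p_i,r_i}^{-1})^* g$. As $g_i$ converges smoothly to the Euclidean metric on $T_pM$ and $cr_i\to 0$, the limit $\overline V$ is stationary in Euclidean $T_pM$.

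Next, for any $R>0$ I would build a minimal replacement of $\overline V$ in $B_R(0)\subset T_pM$. For $i$ large, set $K_i:=\overline{\sB_{Rr_i}(p_i)}\subset U$. Proposition \ref{P:good-replacement-property} yields a $c$-replacement $V_i^*$ of $V$ in $K_i$, whose interior support is an almost embedded stable $c$-boundary by Lemma \ref{L:reg-replacement}. The rescaled $\widetilde V_i^*:=(\bleta_{p_i,r_i})_\# V_i^*$ is then an almost embedded stable CMC hypersurface of mean curvature $cr_i$ in $B_R(0)$ in the metric $g_i$, with uniformly bounded area (by the monotonicity formula applied to the stationary $\overline V$). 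Since $cr_i\to 0$, Theorem \ref{T:compactness}(iii) yields smooth subsequential convergence of $\widetilde V_i^*\lc B_R(0)$ to a smooth embedded stable minimal hypersurface $\Sigma_R$ in the Euclidean $B_R(0)$. Gluing $\Sigma_R$ inside with $\overline V\lc(T_pM\setminus B_R(0))$ outside yields a candidate replacement $\overline V^*$; the consistency across $\partial B_R(0)$ follows from the identity $V_i^*=V$ outside $K_i$. The crucial bound
\[
\bigl|\,\|V_i^*\|(M)-\|V\|(M)\,\bigr|\le c\,\vol(K_i)\le Cr_i^{n+1}
\]
from Proposition \ref{P:good-replacement-property}(ii) rescales by $r_i^{-n}$ to $Cr_i\to 0$, so the mass defect vanishes in the limit and $\overline V^*$ is a valid replacement of $\overline V$ in $B_R(0)$ by a stable minimal hypersurface.

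Having verified the good-replacement property in every $B_R(0)$, the regularity theorem of Colding--De Lellis \cite{CD03} (via successive concentric annular replacements and unique continuation for stable minimal hypersurfaces) gives that $\overline V$ is itself a smooth embedded minimal hypersurface in $T_pM$ with integer multiplicity on each connected component, and constancy identifies this with an integer multiple of a complete embedded minimal hypersurface $\Sigma$. Properness of $\Sigma$ follows from the stationarity of $\overline V$: the monotonicity formula gives local area bounds $\|\overline V\|(B_r(x))\le Cr^n$, so any sequence on $\Sigma$ escaping compact sets of $\Sigma$ must also escape compact sets of $T_pM$. The principal technical obstacle I expect is verifying that the glued $\overline V^*$ is genuinely a varifold replacement --- matching the mass and varifold structure on the overlap --- which is precisely where the higher-order nature of the mass defect ($r_i^{n+1}$ versus $r_i^n$) plays the decisive role.
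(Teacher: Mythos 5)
Your approach matches the paper's in its essential mechanism: stationarity of $\overline V$, constructing minimal replacements from rescaled $c$-replacements, and the key observation that the mass defect $c\vol(K_i)\lesssim r_i^{n+1}$ is annihilated by the rescaling factor $r_i^{-n}$. However, two points are missing and would leave the appeal to Proposition \ref{P:CD regularity} unjustified as written. First, Definition \ref{D:CD good replacement property} requires replacements on annuli $A_{s,t}(x)$ for \emph{every} center $x\in W$ with $0<s<t<r(x)$, not on balls $B_R(0)$ about the origin. A ball replacement is not an annulus replacement — the latter must leave $\overline V$ untouched on the inner ball — so your glued $\overline V^*$ does not verify the hypothesis of the regularity theorem. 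This is easily repaired: apply Proposition \ref{P:good-replacement-property} with $K_i=\Clos\left(A_{s r_i,\,t r_i}(x_i)\cap M\right)$ where $x_i = \bleta_{p_i,r_i}^{-1}(x)$, which is exactly what the paper does.

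Second, and more substantively, Definition \ref{D:CD good replacement property}(b),(c) demands that the replacement $\overline V'$ itself admit replacements in small annuli, and those in turn admit replacements. Your proposal is silent on this iterative requirement. The paper supplies the missing ingredient via Proposition \ref{P:good-replacement-property}(iii): each $c$-replacement $V_i^*$ is \emph{still} $c$-almost minimizing in $U$, so the same rescaling construction applies to produce a minimal replacement of $\overline V'$ on any further annulus, and the process iterates. Without noting this, the invocation of the Colding--De Lellis regularity theorem has a gap. Everything else — the stationarity argument, the use of Theorem \ref{T:compactness}(iii) to obtain a smooth stable minimal limit, the area bounds from monotonicity, and the properness conclusion — is correct and agrees with the paper's reasoning.
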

\begin{proof}
By Lemma \ref{L:c-am implies c-bd-first-variation}, $V$ has $c$-bounded first variation in $U$, so the blowup $\overline{V}$ is stationary in $T_pM$. We will show that $\overline{V}$ satisfies the good replacement property (Definition \ref{D:CD good replacement property}) in any open subset $W\subset T_p M$, which will imply that $\overline{V}$ is regular by Proposition \ref{P:CD regularity}. The properness follows from the Euclidean volume growth of $\overline{V}$, which is a direct corollary of the monotonicity formula.

Now fix a bounded open subset $W\subset T_p M$ and an arbitrary $x\in W$. Consider an arbitrary annulus $An=A_{s, t}(x)\subset W$ of outer radius $t\leq 1$. We will show that $\overline{V}$ has a replacement in $An$ in the sense of Definition \ref{D:CD good replacement}. Since $\bleta_{p_i, r_i}(M)\to T_p M$ locally uniformly, we can identify $\bleta_{p_i, r_i}(M)$ with $T_p M$ on compact subsets for $i$ large. Denote $An_i= \bleta_{p_i, r_i}^{-1}(An)$, then $\overline{An_i}\subset U$ for $i$ large.  For each $i$ large, we may apply Proposition \ref{P:good-replacement-property} to obtain a $c$-replacement $V^*_i$ of $V$ in $\overline{An_i}$. Denote $\overline{V}^*_i=(\bleta_{p_i, r_i})_\#V^*_i$, $\overline{V}_i=(\bleta_{p_i, r_i})_\# V$, then up to taking subsequences we have 
\[ \overline{V}'=\lim_{i\to\infty} \overline{V}^*_i, \text{ for some varifold $\overline{V}'$ in $T_pM$.} \] 
Moreover, we can deduce the following for $\overline{V}'$: 
\begin{itemize}[leftmargin=0.8cm]
\item Proposition \ref{P:good-replacement-property}(i) $\Longrightarrow$ $\overline{V}^*_i \lc (W\backslash\overline{An})= \overline{V}_i\lc (W\backslash\overline{An})$, hence \[\overline{V}' \lc (W\backslash\overline{An})= \overline{V}\lc (W\backslash\overline{An});\]

\item Proposition \ref{P:good-replacement-property}(ii) $\Longrightarrow$ $-c\vol(\overline{An_i})\cdot r_i^{-n}\leq \|\overline{V}^*_i\|(W)-\|\overline{V}_i\|(W)\leq c\vol(\overline{An_i})\cdot r_i^{-n}$.

Since the outer radius of $\overline{An}_i$ is at most $r_i$, for large $i$ there exists some $C_0>0$ depending only on $M$ such that $\vol(\overline{An}_i)\leq C_0 r_i^{n+1}$. This implies that \[\|\overline{V}'\|(W)=\|\overline{V}\|(W);\]

\item Proposition \ref{P:good-replacement-property}(iii) $\Longrightarrow$ $V_i^*$ has $c$-bounded first variation in $U$, hence $\overline{V}'$ is stationary in $W$;

\item By Lemma \ref{L:reg-replacement}, the restriction $V^*_i \lc An_i$ is a smooth, almost embedded, stable $c$-boundary $\Si_i^*$. Consider the rescalings: $\overline{\Si}_i=\bleta_{p_i, r_i}(\Si_i^*)\subset An$. By Proposition \ref{P:good-replacement-property}(ii) and the monotonicity formula \cite[40.2]{Si83}, $\overline{\Si}_i$ have uniformly bounded mass. This together with the compactness Theorem \ref{T:compactness}(iii) implies that $\overline{V}'\lc An$ is an embedded stable minimal hypersurface.
\end{itemize} 
Therefore, $\overline{V}'$ is a good replacement of $\overline{V}$ in $An$. By Proposition \ref{P:good-replacement-property}(iii), each $V^*_i$ is still $c$-almost minimizing in $U$. Hence for any other annulus $An'\subset W$ of outer radius $\leq 1$, we can repeat the above process and produce a good replacement $\overline{V}''$ of $\overline{V}'$ in $An'$. In fact, we may repeat this process any finite number of times. In particular, $\overline{V}$ satisfies the good replacement property (Definition \ref{D:CD good replacement property}) in $W$, and this completes the proof. 
\end{proof}

\begin{proposition}[Tangent cones are planes]
\label{P:tangent-cone}
Let $2 \leq n \leq 6$. Suppose $V \in \V_n(M)$ has $c$-bounded first variation in $M$ and is $c$-almost minimizing in small annuli. Then $V$ is integer rectifiable. Moreover, for any $C \in \VarTan(V,p)$ with $p \in \spt\|V\|$, 
\begin{equation}
\label{E:tangent cones are planes}
C= \Theta^n (\|V\|, p) |S| \text{ for some $n$-plane $S \subset T_p M$ where $\Theta^n(\|V\|, p)\in\N$}.
\end{equation}
\end{proposition}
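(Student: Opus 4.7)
The plan is to combine Lemma~\ref{L:blowup is regular} with the monotonicity formula: once we know tangent cones are regular minimal hypersurfaces, the fact that they are cones will collapse them to $n$-planes. First, since $V$ has $c$-bounded first variation in $M$, the standard Euclidean monotonicity argument (with the first-variation term absorbed into an exponential correction $e^{cr}$) shows that $r \mapsto e^{cr} r^{-n}\|V\|(B_r(p))$ is monotone nondecreasing; in particular, the density $\Theta^n(\|V\|, p)$ exists at every $p \in \spt\|V\|$. For any sequence $r_i \to 0$, the rescalings $V_i=(\bleta_{p, r_i})_\# V$ have generalized first variation bounded in norm by $cr_i$, which tends to zero, so any subsequential varifold limit $C \in \VarTan(V, p)$ is stationary in $T_p M$; moreover, the constancy of the density ratio that follows from the monotonicity forces $C$ to be dilation-invariant, i.e., a genuine cone.

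Next, to identify $C$, I would invoke Lemma~\ref{L:blowup is regular} with $p_i \equiv p$ and $r_i \to 0$. The hypothesis of $c$-almost minimizing in small annuli around $p$ is exactly what is needed in this one-point blowup setting, since the preimages $\bleta_{p, r_i}^{-1}(A_{s, t}(0)) = A_{r_i s, r_i t}(p)$ are concentric annuli at $p$ at small scales. This yields $C = k\,|\Si|$ where $k \in \N$ and $\Si \subset T_p M$ is a complete, properly embedded minimal hypersurface. Since $C$ is a cone, its support $\Si$ is dilation-invariant through $0$; but an embedded, smooth minimal hypersurface which is a cone at $0$ must be an $n$-plane $S$, for otherwise $0$ would be a genuine conical singularity, contradicting smoothness/embeddedness at the origin. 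Taking densities at the cone point, $k = \Theta^n(\|C\|, 0) = \Theta^n(\|V\|, p)$, and hence $C = \Theta^n(\|V\|, p)\,|S|$, with $\Theta^n(\|V\|, p) \in \N$ in particular.

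Finally, for integer rectifiability: $V$ has locally bounded first variation and $\Theta^n(\|V\|, p) \geq 1$ is a positive integer at every $p \in \spt\|V\|$. By Allard's rectifiability theorem \cite[42.4]{Si83}, $V$ is rectifiable, and integrality of the density at every point upgrades this to integer rectifiability. The main obstacle I anticipate is the annular subtlety in the second paragraph: Lemma~\ref{L:blowup is regular} was stated under the stronger assumption of $c$-almost minimizing on a full open set $U$, and one must verify that only the annular version centered at $p$ is used in its proof (specifically, in constructing the replacements $V^*_i$ on the rescaled annuli). This is indeed the case for single-point blowups, where the relevant annuli are always concentric at $p$, so the argument goes through unchanged.
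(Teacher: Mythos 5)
Your overall approach is essentially the same as the paper's: deduce stationarity and the cone structure of $C$ from the monotonicity formula, establish the good replacement property for $C$ by the argument of Lemma~\ref{L:blowup is regular}, invoke the Colding--De~Lellis regularity (Proposition~\ref{P:CD regularity}) to get that $C$ is an integer multiple of a smooth embedded minimal hypersurface, and conclude it is a plane. The paper literally says ``by the same argument as in Lemma~\ref{L:blowup is regular}'', so the substance matches; your separate discussion of integer rectifiability via Allard is a reasonable way to make explicit a point the paper leaves terse.

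One correction to the final paragraph, where you try to discharge the hypothesis mismatch. You claim the relevant annuli ``are always concentric at $p$'' for a single-point blowup. That is not quite right: Definition~\ref{D:CD good replacement property} requires replacements on annuli $A_{s,t}(x)$ for arbitrary $x$ in a bounded open set $W\subset T_pM$, not just for $x=0$, and indeed the iterated replacements $V''$, $V'''$ force you to move the annulus center around. The pullback $\bleta_{p,r_i}^{-1}(A_{s,t}(x))$ is an annulus $A_{r_i s,\,r_i t}(p + r_i x)$ whose center is not $p$ once $x\neq 0$. What actually saves the argument is that Definition~\ref{D:am-annuli} is a pointwise condition: $V$ is $c$-almost minimizing in small annuli around \emph{every} point of $M$, in particular around the centers $p + r_i x$, which converge to $p$ while the outer radii $r_i t\to 0$. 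So for $i$ large these annuli do lie in the allowed regime and Proposition~\ref{P:good-replacement-property} applies. The conclusion you draw is correct, but the justification should appeal to the pointwise nature of the ``small annuli'' hypothesis rather than to concentricity at $p$.
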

\begin{proof}
Let $r_i\to 0$ be a sequence such that $C$ is the varifold limit:
\[ C=\lim_{i\to\infty} (\bleta_{p, r_i})_{\#} V. \]
First we know $C$ is stationary in $T_p M$. Since $V$ is $c$-almost minimizing in small annuli centered at $p$, by the same argument as in Lemma \ref{L:blowup is regular}, we can show that $C$ satisfies the good replacement property (Definition \ref{D:CD good replacement property}) in $T_p M$. (Note that Definition \ref{D:CD good replacement property} only requires the existence of good replacements in small annuli.) Therefore, by Proposition \ref{P:CD regularity}, $C$ is an integer multiple of some embedded minimal hypersurface of $T_p M$, and moreover, it is a cone by \cite[19.3]{Si83}. In particular $C$ is smooth and hence $\spt\|C\|$ must be a plane. This finishes the proof.
\end{proof}

\section{Regularity for $c$-min-max varifold}
\label{S:Regularity for c-min-max varifold}

In this section, we prove the regularity of our min-max varifolds. In particular we prove that every varifold which has $c$-bounded variation and is $c$-almost minimizing in small annuli is a smooth, closed, almost embeded, CMC hypersurface with multiplicity one.

\begin{theorem}[Main regularity]
\label{T:main-regularity}
Let $2 \leq n \leq 6$, and $(M^{n+1}, g)$ be an $(n+1)$-dimensional smooth, closed Riemannian manifold. 
Suppose $V \in \V_n(M)$ is a varifold which
\begin{enumerate}
\item has $c$-bounded first variation in $M$ and 
\item is $c$-almost minimizing in small annuli,
\end{enumerate}
then $V$ is induced by $\Sigma$,
where 
\begin{itemize}
\item[(i)] $\Sigma$ is a closed, almost embedded $c$-hypersurface (possibly disconnected);
\item[(ii)] the density of $V$ is exactly $1$ at the regular set $\mR(\Si)$ and $2$ at the touching set $\mS(\Si)$.
\end{itemize}
\end{theorem}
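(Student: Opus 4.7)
The plan is to establish regularity locally around each point $p\in\spt\|V\|$, by constructing and gluing successive $c$-replacements on concentric annuli, following the Pitts--Colding--De Lellis scheme but with substantial modifications dictated by the CMC setting.

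First, fix $p\in\spt\|V\|$ and let $r_{am}(p)>0$ be the radius from Definition \ref{D:am-annuli}. For any $0<s<r\leq r_{am}(p)$, I would apply Proposition \ref{P:good-replacement-property} to $V$ on $K_1=\overline{A_{s,r}(p)}$, producing a first $c$-replacement $V^*$. By Lemma \ref{L:reg-replacement}, $\Sigma^*=\spt\|V^*\|\cap A_{s,r}(p)$ is a smooth, almost embedded, stable $c$-boundary with the claimed density, and by Proposition \ref{P:good-replacement-property}(v) the new varifold $V^*$ still has $c$-bounded first variation on $M$. One then checks (this requires a small argument using the fact that $V=V^*$ outside $K_1$) that $V^*$ remains $c$-almost minimizing in small annuli centered at points $q\in A_{s,r}(p)\cup (M\setminus K_1)$, so the iteration is permitted. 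Choosing $s<r'<r$ and $s'<s$, I apply the replacement procedure again to $V^*$ on $K_2=\overline{A_{s',r'}(p)}$, obtaining a second replacement $V^{**}$ which is smooth, almost embedded, and stable CMC in $A_{s',r'}(p)$.

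The heart of the proof is to show that $V^*$ and $V^{**}$ coincide on the overlap annulus $A_{s,r'}(p)$, so that $V^*$ extends across $\partial B_s(p)$ and, by iterating with shrinking inner radii, to a smooth almost embedded stable CMC hypersurface on $B_{r_{am}(p)}(p)\setminus\{p\}$. Along $\partial B_s(p)$, for each $q\in\spt\|V^*\|\cap\partial B_s(p)$, I would first use Lemma \ref{L:blowup is regular} and Proposition \ref{P:tangent-cone} to show the tangent planes of $V^*$ and $V^{**}$ at $q$ coincide (applied to both sides of the sphere; the mass-defect $c\,\vol$ vanishes under blowup so only stationarity matters). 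Near a regular point of both surfaces, the one-sided maximum principle (Lemma \ref{L:classical MP}) combined with matching orientations forces agreement of the sheets, and unique continuation for the CMC equation propagates this to the whole overlap. The orientation matching is the delicate point: one must show that $V^{**}$ is representable as a boundary on all of $K_1\cup K_2$ so that the outward normals of the approximating Caccioppoli sets align with those of $V^*$. Near touching points, use Proposition \ref{P:smooth touching set} to locally decompose $V^*$ into ordered embedded sheets and glue each sheet with its counterpart in $V^{**}$ along the regular part, then extend across the $(n-1)$-dimensional touching set by continuity.

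Once $V^*$ is extended to a smooth almost embedded stable $c$-hypersurface $\Sigma$ on the punctured ball $B_{r_{am}(p)}(p)\setminus\{p\}$, I would remove the singularity at $p$: the uniform curvature estimate of Theorem \ref{T:curvature estimates and compactness} together with the bounded-area hypothesis gives $|A^\Sigma|\in L^\infty$ near $p$, and a standard capacity/removability argument (since $\{p\}$ has codimension $n+1\geq 3$) yields a smooth extension. Finally, I must show $V=|\Sigma|$ (with the claimed multiplicities) in a neighborhood of $p$. The Constancy Theorem is unavailable, so instead I would compare $V$ to $V^*$ directly: on each annulus $A_{s,r}(p)$, one has $V\lc(M\setminus K_1)=V^*\lc(M\setminus K_1)$ by Proposition \ref{P:good-replacement-property}(i), and $|\|V\|(K_1)-\|V^*\|(K_1)|\leq c\,\vol(K_1)=O(r^{n+1})$. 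Running the moving-sphere/center argument (varying both $p$ and the radii) pins down the density of $V$ at every point of $\Sigma$, giving density $1$ on $\mathcal R(\Sigma)$ and $2$ on $\mathcal S(\Sigma)$.

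The main obstacle is Step 3, the $C^1$ gluing across the boundary sphere in the presence of touching sets and possibly mismatched orientations; the gluing requires not only the blowup analysis and maximum principle but also the careful bookkeeping of the approximating Caccioppoli sets $\Omega_i^*$ so that $V^{**}$ can genuinely be viewed as the boundary of a Caccioppoli set across both annuli, ensuring outward normals agree. After that, the removable singularity and the moving-sphere comparison are comparatively standard.
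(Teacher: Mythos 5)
Your proposal tracks the paper's five-step scheme closely: constructing successive $c$-replacements on overlapping annuli, gluing them via blowup analysis and maximum principle with the key issue of boundary representability for orientation matching, extending by unique continuation to the punctured ball, removing the singularity at the center, and finally comparing $V$ to the extended replacement. Steps 1, 2, 3 and the transversality-based density comparison in Step 5 are in the right spirit.

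However, there is a genuine gap in your Step 4 (removability of the singularity at $p$). You claim that the curvature estimate of Theorem \ref{T:curvature estimates and compactness}, together with the bounded-area hypothesis, gives $|A^\Sigma|\in L^\infty$ near $p$, and then invoke a capacity/removability argument ``since $\{p\}$ has codimension $n+1\geq 3$.'' Neither statement holds as written. The curvature estimate is of the form $|A^\Sigma|^2(x)\leq C_1/\dist_M^2(x,\partial U)$, and when $U$ is a punctured ball the boundary includes the puncture $\{p\}$; the estimate therefore \emph{degenerates} as $x\to p$ (at the rate $|A^\Sigma|\lesssim 1/|x-p|$) and cannot give an $L^\infty$ bound by itself. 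In fact, a uniform $|A^\Sigma|$ bound near $p$ is essentially equivalent to the removability you are trying to prove. The codimension comment is also misapplied: what matters is the codimension of $\{p\}$ inside the $n$-dimensional hypersurface $\Sigma$, which is $n$, and even then a point singularity for the CMC equation is not automatically removable by a capacity argument alone --- one needs control on the density and multiplicity at $p$. The paper's argument is more delicate: it first shows (via the $c$-bounded first variation of $\tilde V$ and a blowup argument as in Lemma \ref{L:blowup is regular} and Proposition \ref{P:tangent-cone}) that every tangent varifold of $\tilde V$ at $p$ is an integer multiple $m|S|$ of an $n$-plane; then the compactness Theorem \ref{T:compactness} and stability yield an $m$-sheeted, ordered graphical decomposition of $\Sigma$ in a neighborhood of $p$; each sheet $\Sigma_i$ is then extended as a varifold of $c$-bounded first variation across $p$, shown to have density exactly $1$ there by adding up to $m$, and finally Allard's regularity theorem gives a smooth extension of each sheet. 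The one-sided maximum principle then forces $m\in\{1,2\}$. Without this decomposition into sheets and the density-$1$ Allard step, your argument for removing the singularity does not go through.

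A minor secondary point: your aside that you need $V^*$ to be $c$-almost minimizing ``in small annuli centered at points $q$'' overstates what is required; Proposition \ref{P:good-replacement-property}(iii) already guarantees that $V^*$ is $c$-almost minimizing in the fixed annulus $U$, which is exactly what is needed to run the second replacement. This is not a gap, just an unnecessary complication.
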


\begin{proof}
The conclusion is purely local, so we only need to prove the regularity of $V$ near an arbitrary point $p\in \spt\|V\|$. Fix a $p\in\spt\|V\|$, then there exists $0< r_0 < r_{am}(p)$ such that for any $r<r_0$, the mean curvature $H$ of $\partial B_r(p)\cap M$ in $M$ is greater than $c$. Here $r_{am}(p)$ is as in Definition \ref{D:am-annuli}.

In particular, if $r<r_0$ and $W\in\V_n(M)$ has $c$-bounded first variation in $B_r(p)\cap M$ and $W\neq 0$ in $B_r(p)$, then by the maximum principle (Proposition \ref{P:maximum principle})
\begin{equation}
\label{E:corollary-max-principle}
\emptyset \neq \spt \|W\| \cap \partial B_r(p) = \Clos\left( \spt \|W\| \setminus \Clos(B_r(p))\right) \cap \partial B_r(p).
\end{equation} 
Note that in the second equality we need a localized version of Proposition \ref{P:maximum principle} which holds true by the remark after \cite[Theorem 2]{White10}.

We will show that $V\lc B_{r_0}(p)$ is an almost embedded hypersurface of constant mean curvature $c$ with density equal to $2$ along its touching set. The argument consists of five steps:

\vspace{.3cm}
\noindent \textbf{Step 1:} \textit{Constructing successive $c$-replacements $V^*$ and $V^{**}$ on two overlapping concentric annuli.}

\noindent \textbf{Step 2:} \textit{Gluing the $c$-replacements smoothly (as immersed hypersurfaces) on the overlap.}

\noindent \textbf{Step 3:} \textit{Extending the $c$-replacements down to the point $p$ to get a $c$-`replacement' $\ti{V}$ on the punctured ball.}

\noindent \textbf{Step 4:} \textit{Showing that the singularity of $\ti{V}$ at $p$ is removable, so that $\ti{V}$ is regular.}

\noindent \textbf{Step 5:} \textit{$V$ coincides with the almost embedded hypersurface $\ti{V}$ on a small neighborhood of $p$.}
\vspace{.3cm}

We now proceed to the proof.

\subsection*{Step 1}
We first describe the construction of $c$-replacements on overlapping annuli; a key property will be that the replacements are also boundaries in the chosen annulus (see Claim 1). 

Fix $0<s<t<r_0$. By the choice of $r_0$, we can apply Proposition \ref{P:good-replacement-property} to $V$ to obtain a $c$-replacement $V^*$ in $K=\Clos(A_{s, t}(p)\cap M)$.  By (\ref{E:corollary-max-principle}) and Lemma \ref{L:reg-replacement}, the restriction
\[ \Si_1=V^*\lc (A_{s, t}(p)\cap M) \]
is a nontrivial, smooth, almost embedded, stable $c$-boundary with outer unit normal $\nu_1$. 

By Proposition \ref{P:smooth touching set}, the touching set $\mathcal{S}(\Sigma_1)$ is contained in a countable union of $(n-1)$-dimensional connected submanifolds $\bigcup S_1^{(k)}$. Since a countable union of sets of measure zero still has measure zero, it follows from Sard's theorem that we may choose $s_2 \in (s, t)$ such that $\partial (B_{s_2}(p)\cap M)$ intersects $\Si_1$ and all the $S_1^{(k)}$ transversally. 

Given any $s_1\in (0, s)$, by Proposition \ref{P:good-replacement-property}(iii), we can apply Proposition \ref{P:good-replacement-property} again to get a $c$-replacement $V^{**}$ of $V^*$ in $K=\Clos(A_{s_1, s_2}(p)\cap M)$. By (\ref{E:corollary-max-principle}) and Lemma \ref{L:reg-replacement} again, the restriction
\[ \Si_2=V^{**}\lc (A_{s_1, s_2}(p)\cap M) \]
is also a nontrivial, smooth, almost embedded, stable $c$-boundary with outer unit normal $\nu_2$. Note that by Proposition \ref{P:good-replacement-property}(v), both $V^*$ and $V^{**}$ have $c$-bounded first variation.

We can choose the second $c$-replacement $V^{**}$ so that it satisfies:

\vspace{.3cm}
\textit{Claim 1: there exists a set $\Om^{**}\in\C(M)$, such that 
\begin{itemize}
\item[a)] $\Si_1\cap A_{s_2, t}(p)$ and $\Si_2$ are the boundaries of $\Om^{**}$ in $A_{s_2, t}(p)$ and $A_{s_1, s_2}(p)$ respectively;
\item[b)] $\nu_1, \nu_2$ coincide with the outer unit normal of $\Om^{**}$ in $A_{s_2, t}(p)$ and $A_{s_1, s_2}(p)$ respectively;
\item[c)] if $\|V^{**}\|(\partial B_{s_2}(p))=0$, then $V^{**}$ is identical to $|\partial\Om^{**}|$ in $A_{s_1, t}(p)\cap M$.
\end{itemize}}

\textit{Proof of Claim 1:}  Fix $0<\tau<s_1$, then by Proposition \ref{P:good-replacement-property}(iv), $V^{*} =\lim_{i \to \infty} |\partial\Om^*_i|$ as varifolds for some $\Om^*_i\in\C(M)$, where $\Om^*_i\in \sA^c_n(A_{\tau, r_0}(p)\cap M; \ep_i, \de_i; \F)$ with $\ep_i, \de_i \to 0$. The regularity of the $\partial \Omega_i^*$ as in the proof of Lemma \ref{L:reg-replacement}, together with the compactness Theorem \ref{T:compactness} imply that $\partial\Om^*_i\lc A_{s, t}(p)$ converges locally smoothly to $\Si_1$ in $A_{s, t}(p)\cap M$. 

 Applying Lemma \ref{L:minimisation} for each $\Om^*_i$ in $K=\Clos(A_{s_1, s_2}(p)\cap M)$, we can construct new $c$-minimizers $\Om^{**}_i$, such that 
\begin{itemize}
\item $\spt(\Om^*_i-\Om^{**}_i)\subset K=\Clos(A_{s_1, s_2}(p)\cap M)$;
\item $\Om^{**}_i$ is locally $\Ac$-minimizing in $A_{s_1, s_2}(p)\cap M$;
\item $V^{**}=\lim |\partial\Om^{**}_i|$ as varifolds;
\item $\partial\Om^{**}_i\lc A_{s_1, s_2}(p)$ converges locally smoothly to $\Si_2$ (again as in the proof of Lemma \ref{L:reg-replacement}). 
\end{itemize} 
By the weak compactness \cite[Theorem 6.3]{Si83}, up to a subsequence, $\partial\Om^{**}_i$ converges weakly as currents to some $\partial\Om^{**}$ with $\Om^{**}\in\C(M)$. Claims (a) and (b) follow from the locally smooth convergence. The weak convergence implies that $\|\partial\Om^{**}\|(A_{s_1, t}(p))\leq \|V^{**}\|(A_{s_1, t}(p))$. If $\|V^{**}\|(\partial B_{s_2}(p))=0$, then together with the locally smooth convergence, we have $\|\partial\Om^{**}\|(A_{s_1, t}(p))= \|V^{**}\|(A_{s_1, t}(p))$;  moreover, $V^{**}\lc (A_{s_1, t}(p)\cap M)=|\partial \Om^{**}|\lc (A_{s_1, t}(p)\cap M)$ by \cite[2.1(18)(f)]{P81}. This confirms (c). \qed

\subsection*{Step 2}

We now show that $\Sigma_1$ and $\Sigma_2$ glue smoothly (as immersed hypersurfaces) across $\partial (B_{s_2}(p)\cap M)$. 
Indeed, define the intersection set
\begin{equation}
\Ga=\Si_1\cap \partial(B_{s_2}(p)\cap M),\quad \mS(\Ga)=\Ga\cap \mS(\Si_1).
\end{equation}
Then by transversality, $\Ga$ is an almost embedded hypersurface in $\partial(B_{s_2}(p)\cap M)$, and $\mS(\Ga)$ is its touching set.  Notice that
\begin{equation}
\label{E:openness of Gamma}
\text{$\mS(\Ga)$ is closed, and $\mR(\Ga)=\Ga\backslash\mS(\Ga)$ is open in $\Ga$.}
\end{equation}
It follows from the maximum principle that 
\[ \Clos(\Si_2)\cap \partial(B_{s_2}(p)\cap M)\subset\Ga. \] 
Indeed, (\ref{E:corollary-max-principle}) implies that any $y \in \Clos(\Si_2)\cap \partial(B_{s_2}(p)\cap M)$ is also a limit point of $\spt\|V^{**}\|$ from the outer side of $\partial B_{s_2}(p)$, on which $V^{**}$ coincides with $\Sigma_1$. In fact, with a little more work we have

\vspace{.3cm}
\textit{Claim 2: $\Clos(\Si_2)\cap \partial(B_{s_2}(p)\cap M)=\Ga$, and then $\Si_1$ glues together continuously with $\Si_2$.}

\textit{Proof of Claim 2:}  By Proposition \ref{P:good-replacement-property}(i), we have
\begin{equation}
\label{E:agreement-with-sigma1}
V^*=V^{**}=\text{$\Si_1$,}\quad \text{ in $A_{s_2, t}(p)\cap M$.}
\end{equation} 
Given any $x\in \Ga$, using (\ref{E:agreement-with-sigma1}), Proposition \ref{P:tangent-cone} and the fact that $\Si_1$ meets $\partial (B_{s_2}(p)\cap M)$ transversally, we have
\begin{equation}
\label{E:tangent cones for the second replacement}
\VarTan(V^{**}, x) = \{ \Theta^n(\|V^*\|, x) \; |T_x \Si_1| \}.
\end{equation}
This implies that $x$ is a limit point of $\spt\|V^{**}\|$ from inside of $\partial B_{s_2}(p)$, and thus completes the proof of the claim. \qed
\vspace{.3cm}

As a direct corollary of (\ref{E:tangent cones for the second replacement}), \cite[Theorem 3.2(2)]{Si83} and Claim 1(c), we have
\begin{equation}
\label{E:V** is boundary}
\text{$\|V^{**}\|(\partial B_{s_2}(p))=0$, and hence $V^{**}=|\partial\Om^{**}|$ in $A_{s_1, t}(p)\cap M$.}
\end{equation}

\vspace{.3cm}
Furthermore, we will show that $\Si_1$ glues with $\Si_2$ in $C^1$, i.e. the tangent spaces of $\Si_1$ and $\Si_2$ agree along $\Ga$, with matching normals. Take an arbitrary $q\in \Gamma$. We will need to divide to two sub-cases:

\vspace{.3cm}

\noindent\textbf{Sub-case (A)}: $q$ is a regular point of $\Sigma_1$, i.e. $q\in \mR(\Ga)$.

\vspace{.3cm}

First we have the following.

\vspace{.3cm}
\textit{Claim 3(A): Fix $x\in\mR(\Ga)$, for any sequence of $x_i\to x$ with $x_i\in\mR(\Ga)$ and $r_i\to 0$, we have}
\[ \lim_{i\to\infty} (\bleta_{x_i, r_i})_{\#} V^{**}=  T_x\Si_1 \quad \text{ as varifolds}. \]

\textit{Proof of Claim 3(A):} By the weak compactness of Radon measures, after passing to a subsequence,
\begin{equation}
\label{E: limit varifold of V** at xi}
\lim_{i \to \infty} (\bleta_{x_i,r_i})_\sharp V^{**}=C \in\V_n(T_x M).
\end{equation}
By Lemma \ref{L:blowup is regular}, $C$ is a regular, proper, complete minimal hypersurface in $T_x M$. By (\ref{E:agreement-with-sigma1}), $C$ coincides with $T_x\Si_1$ on a half space of $T_x M$. The classical maximum principle implies that $C\supset T_x \Si_1$. It then follows from the half space theorem for minimal hypersurfaces \cite[Theorem 3]{HM90} that there are no other connected components of $C$. Thus $C=T_x\Si_1$ and the proof is complete. \qed

\vspace{.3cm}

Since $\{(\bleta_{x_i,r_i})_\sharp V^{**}: i\in\N\}$ have uniformly bounded first variation, a standard argument using the monotonicity formula implies that
\begin{equation}
\label{E:Hausdorff-convergence}
\spt \|(\bleta_{x_i,r_i})_\sharp V^{**}\| \to T_x \Sigma_1 \text{ in the Hausdorff topology}. 
\end{equation}
To show that $\Si_1$ and $\Si_2$ glue together along $\Ga$ in $C^1$ near $q$. We need to show that:

\vspace{.3cm}

\textit{Claim 4(A): For each $x \in \mR(\Ga)$, we have}
\[ \lim_{z \to x, z \in \Si_2} \nu_2(z) = \nu_1(x).\]
\textit{Moreover, the convergence is uniform in $x$ on compact subsets of $\Gamma$ near $q$.}

\textit{Proof of Claim 4(A):} The uniformity follows from the fact that $\nu_1$ is continuous on $\Gamma$, so we only need to establish the convergence to $\nu_1$.  

So consider a sequence $z_i\in \Sigma_2$ converging to some $x\in \mathcal{R}(\Ga)$. Since $x$ is a regular point of $\Si_1$, by Claim 3(A) and the upper semi-continuity of density function for varifolds with bounded first variation \cite[17.8]{Si83}, we know that $z_i$ is also a regular point of $\Si_2$ for $i$ large enough.

Take $x_i \in \Ga$ to be the nearest point projection (in $\R^L$) of $z_i$ to $\Ga$ and $r_i=|z_i-x_i|$. Note that $x_i \to x \in \mathcal{R}(\Ga)$ and $r_i \to 0$, so we are in the situation of Claim 3(A). Note that $\Si_2 \cap B_{r_i/2}(z_i)$ is an embedded, stable $c$-hypersurface in $M$, so by Theorem \ref{T:compactness} a subsequence of the blow-ups $\bleta_{x_i,r_i} (\Si_2 \cap B_{r_i/2}(z_i))$ converges smoothly to a smooth, embedded, stable, minimal hypersurface $\Si_\infty$ contained in a half-space of $T_x M$. 

On the other hand, Claim 3(A) and (\ref{E:Hausdorff-convergence}) imply that $\bleta_{x_i,r_i} (\Si_2 \cap B_{r_i/2}(z_i))$ converges in the Hausdorff topology to a domain in $T_x \Si_1$. Therefore, we have $\Si_\infty \subset T_x \Si_1$. The smooth convergence then implies that $\nu_2(z_i)$ converges to one of the unit normals $\pm\nu_1(x)$ of $T_x \Sigma_1$. By Claim 1 and (\ref{E:V** is boundary}), we know that $V^{**}=|\partial \Om^{**}|$ in $A_{s_1, t}(p)\cap M$ and $\|\partial \Om^{**}\|(\partial B_{s_2}(p))=0$, therefore the limit of the $\nu_2(z_i)$ must be $\nu_1(x)$, so \emph{Claim 4(A)} is proved. \qed

\vspace{.3cm}

Thus we have proven that near any regular point $q\in \mR(\Ga)$, $\Sigma_1$ and $\Sigma_2$ glue together along $\Gamma$ as a $C^1$ hypersurface with matching outer unit normals $\nu_1,\nu_2$. The higher regularity follows from a standard elliptic PDE argument. More precisely, $\Si_1$ and $\Si_2$ can be written as graphs of some functions $u_1, u_2$ over $T_q\Si_1$ respectively. Since the mean curvatures of both $\Si_1$ and $\Si_2$ are identical to $c>0$ with respect to some unit normals pointing to the same side of $T_q\Si_1$, they satisfy the same mean curvature type elliptic PDE with in-homogenous term equal to $c$. The higher regularity follows from the elliptic regularity of this PDE. 
This finishes {\bf Sub-case (A)}.

\vspace{.3cm}

At this point we have proven that $\Si_2$ glues smoothly with $\Si_1\cap A_{s_2, t}(p)$ along $\mR(\Ga)$. Moreover, by the unique continuation for elliptic PDE, we know that $\Si_2$ is identical to $\Si_1$ 
in a neighborhood of $\mR(\Ga)$ in $A_{s_2, t}(p)\cap M$.
We will need to show that the smooth gluing extends to the touching set $\mS(\Ga)$.

\vspace{.3cm}

\noindent\textbf{Sub-case (B)}: $q$ is a touching point of $\Sigma_1$, i.e. $q\in \mS(\Ga)\subset \mS(\Si_1)$.

\vspace{.3cm}

By Lemma \ref{L:reg-replacement}, in some small neighborhood $U\subset M$ of $q$, $\Si_1\cap U$ is the union of two connected, embedded $c$-hypersurfaces $\Si_{1,1}\cup \Si_{1,2}$ with unit normals $\nu_{1,1}$ and $\nu_{1,2}$, such that $\Si_{1,2}$ lies on one side of $\Si_{1,1}$ and they touch tangentially at $\mS(\Si_1)\cap U=\Si_{1,1}\cap\Si_{1,2}$. By Lemma \ref{L:classical MP}, $\nu_{1,1}=-\nu_{1,2}$ along the touching set $\mS(\Si_1)\cap U$. Denote $\Ga\cap \Si_{1,1}=\Ga_1$ and $\Ga\cap \Si_{1,2}=\Ga_2$, then as embedded submanifolds of $\partial (B_{s_2}(p)\cap U)$, $\Ga_2$ lies on one-side of $\Ga_1$ and they touch tangentially along $\mS(\Ga)\cap U$.

\vspace{.3cm}
\textit{Claim 3(B): Fix $x\in\mS(\Ga)$ and denote $P_x=T_x \Sigma_{1,1}=T_x\Sigma_{1,2}$. For any sequence of $x_i\to x$ with $x_i\in \Ga$ and $r_i\to 0$, up to a subsequence we have}
\[ \lim_{i \to \infty} (\bleta_{x_i,r_i})_\sharp V^{**} =\left\{ \begin{array}{cl}
P_x+\btau_v P_x & \text{ for Type I convergence} \\
2P_x & \text{ for Type II convergence} \end{array}, \right. \]
where $\btau_w$ denotes translation by a vector $w$, and $v\in (P_x)^\perp$ is a vector in $T_xM$ orthogonal to $P_x$ ($v$ may be $\infty$, in which case $\btau_v P$ is understood to be the empty set). The two convergence scenarios are:
\begin{itemize}
\item Type I: $\liminf_{i \to \infty} \dist_{\R^L}(x_i, \mS(\Ga))/r_i=\infty$,
\item Type II: $\liminf_{i \to \infty} \dist_{\R^L}(x_i, \mS(\Ga))/r_i < \infty$.
\end{itemize}

\textit{Proof of Claim 3(B):} 
First we determine the blowup limit $C' = \lim_{i\rightarrow \infty} (\bleta_{x_i,r_i})(\Si_1)$. 

In Type I convergence, for any $R>0$, $\Gamma\cap B_{r_i R}(x_i) \subset \mR(\Ga)$ for $i$ large enough. Up to a subsequence, we can assume that all $x_i$ belong to $\Ga_1$, then $(\bleta_{x_i,r_i})(\Si_{1,1})$ converges locally smoothly to $P_x$. Let $x_i^\pr$ be the nearest point projection of $x_i$ to $\Si_{1, 2}$, and let $v=\lim_{i \to \infty}\frac{x_i^\pr-x_i}{r_i}$ (up to taking a subsequence), which maybe $\infty$. If $v$ is finite, then $(\bleta_{x_i,r_i})(\Si_{1,2})$ converges locally smoothly to $P_x+v$; if $v$ is infinite, then $(\bleta_{x_i,r_i})(\Si_{1,2})$ disappears in the limit. So in this case $C' = P_x + \btau_v P_x$. In the Type II scenario, the touching set does not disappear in the limit and we have $C'=\lim (\bleta_{x_i,r_i})(\Si_1)=2P_x$. 

Now let $C=\lim_{i \to \infty} (\bleta_{x_i,r_i})_\sharp V^{**}$ be the varifold limit as in (\ref{E: limit varifold of V** at xi}). By Lemma \ref{L:blowup is regular}, $C$ is a regular, proper, complete minimal hypersurface in $T_xM$. Again $C$ must coincide with $C'=\lim_{i\rightarrow \infty} (\bleta_{x_i,r_i})(\Si_1)$ on a halfspace of $T_x M$. Since $C'$ consists of one or two parallel planes, the classical maximum principle implies that $C$ contains these planes, and again the halfspace theorem \cite[Theorem 3]{HM90} rules out any other components of $C$. Thus $C$ is identical to $C'$, which completes the proof.
\qed

\vspace{.3cm}
By Claim 3(B) and the same argument in Claim 4(A), we know that 
\begin{equation}
\label{E:convergence of tangent planes}
\lim_{z \to x, z \in \Si_2} [T_z\Si_2] = [T_x\Si_1],
\end{equation}
where $[T_z\Si_2]$ and $[T_x\Si_1]$ denote the un-oriented tangent planes of $\Si_2$ and $\Si_1$ (without counting multiplicity) respectively. Moreover, the convergence is uniform in $x$ on compact subsets of $\mS(\Ga)$ near $q$. Therefore using the regularity of $\Si_2$ in Lemma \ref{L:reg-replacement}, near $q$ the hypersurface $\Si_2$ can be written as a set of graphs $\{\Si_{2, i}: i=1\cdots l\}$ over the half space $[T_q(\Si_1\cap B_{s_2}(p))]$.

Indeed, take $\rho$ small so that $B_\rho(q) \cap M$ may be identified with the tangent space $T_q M$, and for $z\in \Sigma_2 \cap B_\rho(q)$ let $C_\epsilon(z)$ denote (the image of) the cylinder in $T_q M$ with axis perpendicular to $T_q\Sigma_1$ and radius $\epsilon$. For small enough $\rho,\epsilon$, by almost-embeddedness and the uniform convergence of tangent planes, $\Sigma_2 \cap C_\epsilon(z)\cap B_\rho(q)$ decomposes as a finite number of ordered graphs over $T_q\Sigma_1 \cap C_\epsilon(z)$, which have uniformly bounded gradient $\delta \ll 1$. The uniform gradient bound, together with unique continuation for CMC hypersurfaces imply that this graphical decomposition may be extended all the way to the boundary $\Gamma$, preserving the ordering and the gradient bound.

Now since $\Si_2$ glues smoothly with $\Si_1$ along $\mR(\Ga)$, and since $\mR(\Ga)$ is an open and dense subset of $\Ga$, we know that the set $\{\Si_{2, i}: i=1\cdots l\}$ consists of exactly two elements: one of them, denoted by $\Si_{2,1}$, glues smoothly with $\Si_{1,1}$ along $\Ga_1\backslash \mS(\Ga)$; the other one, denoted by $\Si_{2, 2}$, glues smoothly with $\Si_{1,2}$ along $\Ga_2\backslash \mS(\Ga)$. Now (\ref{E:convergence of tangent planes}) implies that the pairs $(\Si_{1,1}, \Si_{2,1})$ and $(\Si_{1,2}, \Si_{2,2})$ glue together in $C^1$ near $q$ respectively. Again higher regularity follows from the elliptic PDE argument as in Sub-case (A). This finishes {\bf Sub-case (B)}.

\subsection*{Step 3}
We now wish to extend the replacements, via unique continuation, all the way to the center $p$. 

Henceforth we denote $V^{**}$ by $V^{**}_{s_1}$ and $\Si_2$ by $\Si_{s_1}$ to indicate the dependence on $s_1$. By varying $s_1\in (0, s)$, we obtain a family of nontrivial, smooth, almost embedded, stable $c$-boundaries $\{\Si_{s_1}\subset A_{s_1, s_2}(p)\cap M\}$. Since unique continuation holds for immersed CMC hypersurfaces, by Step 2 we have $\Si_{s_1}=\Si_1$ in $A_{s, s_2}(p)$, and moreover, for any $s_1'<s_1<s$, we have $\Si_{s_1'}=\Si_{s_1}$ in $A_{s_1, s_2}(p)$. Hence 
\[ \Si:=\bigcup_{0<s_1<s} \Si_{s_1}\]
is a nontrivial, smooth, almost embedded, stable $c$-hypersurface in $(B_{s_2}(p)\backslash\{p\})\cap M$. 
Also
\[V^{**}_{s_1}=\Si, \text{ in } A_{s_1, s_2}(p), V^{**}_{s_1}=V^* \text{ in } A_{s, s_2}(p),\]
\[ \text{and for any } s_1'<s_1<s, V^{**}_{s_1'}=V^{**}_{s_1} \text{ in } A_{s_1, s_2}(p). \]
By Proposition \ref{P:good-replacement-property}, $V^{**}_{s_1}$ has $c$-bounded first variation and uniformly bounded mass for all $0<s_1<s$, so the monotonicity formula \cite[40.2]{Si83} implies that $\|V^{**}_{s_1}\|(B_r(p))\leq C r^n$ for some uniform $C>0$. Therefore as $s_1\to 0$, the family $V^{**}_{s_1}$ will converge to a varifold $\ti{V}\in\V_n(M)$, i.e.
\[\ti{V}=\lim_{s_1\to 0} V^{**}_{s_1},\quad \text{such that}\]
\begin{equation}
\label{E:tilde V}
\ti{V} =\left\{ \begin{array}{cl}
\Si & \text{in } (B_{s_2}(p)\backslash\{p\})\cap M \\
V^* & \text{in } M\backslash B_s(p) \end{array}, \text{ and } \|\ti{V}\|(\{p\})=0.\right. 
\end{equation}

Since $p\in\spt\|V^{**}_{s_1}\|$, by the upper semi-continuity of density function for varifolds with bounded first variation \cite[17.8]{Si83}, we know that $p\in\spt\|\ti{V}\|$.

\subsection*{Step 4}

We now determine the regularity of $\ti{V}$ at $p$.

First, since $\ti{V}$ is the varifold limit of a sequence $V^{**}_{s_1}$ which all have $c$-bounded first variation, we know that $\ti{V}$ also has $c$-bounded first variation. Second, $\ti{V}$, when restricted to any small annulus $A_{r, 2r}(p)\cap M$, already coincides with a smooth, almost embedded, stable $c$-boundary $\Si$.  Using these two ingredients, we can use a blowup argument as in the proofs of Lemma \ref{L:blowup is regular} and Proposition \ref{P:tangent-cone} (without the need for replacements) to show that every tangent varifold of $\ti{V}$ at $p$ is an integer multiple of some $n$-plane, i.e. for any $C \in \VarTan(V,p)$, 
\[ C= \Theta^n (\|\ti{V}\|, p) |S| \text{ for some $n$-plane $S \subset T_p M$ where $\Theta^n(\|\ti{V}\|, p)\in\N$}. \]

Now the removability of the singularity of $\ti{V}$ at $p$ (as an almost embedded hypersurface) follows similarly to \cite[Theorem 7.12]{P81}. We include the details for completeness. We can assume that
\[ \Theta^n (\|\ti{V}\|,p)=m \]
for some $m \in \mathbb{N}$. Since $\Sigma$ is stable in a punctured ball of $p$, by Theorem \ref{T:compactness}, for any sequence $r_i \to 0$, 
\[ \bleta_{p,r_i}(\Sigma) \to m\cdot S \]
locally smoothly in $\mathbb{R}^L \setminus \{0\}$ for some $n$-plane $S \subset T_p M$. However, $S$ may depend on the sequence $r_i$. By the convergence and the regularity of $\Si$, there exists $\si_0>0$ small enough, such that for any $0<\si\leq \si_0$, $\Si$ has an $m$-sheeted, ordered (in the sense of Definition \ref{def:comparisonsheets}), graphical decomposition in $A_{\si/2, \si}(p)$:
\begin{equation}
\label{E: annuli-decomposition}
\ti{V}\lc A_{\si/2, \si}(p)=\sum_{i=1}^{m}|\Si_i(\si)|.
\end{equation}
Here $\Si_i(\si)$ is a graph over $A_{\si/2, \si}(p)\cap S$ for some $n$-plane $S\subset T_p M$.

Since (\ref{E: annuli-decomposition}) holds for all $\si$, by continuity of $\Si$ we can continue each $\Si_i(\si_0)$ to $(B_{\si_0}(p)\setminus\{p\})\cap M$, and we denote the continuation by $\Si_i$. Since each piece $\Si_i$ has constant mean curvature $c$, by a standard extension argument (c.f. the proof in \cite[Theorem 4.1]{HL75}), each $\Si_i$ can be extended as a varifold with $c$-bounded first variation in $B_{\si_0}(p)\cap M$. Given $C_i\in \VarTan(\Si_i, p)$, to see that $C_i$ has multiplicity one, first notice that
\begin{equation}
\label{E:density=1}
\Theta^n(\|C_i\|,p) \geq 1, 
\end{equation}
since each $\Si_i$ is $c$-stable and thus its re-scalings converge with multiplicity to a smooth, embedded, stable, minimal hypersurface by Theorem \ref{T:compactness}. If equality does not hold for some $i$ in (\ref{E:density=1}), this will derive a contradiction since 
\[ \tilde{V}\lc B_{\si_0}(p)= \sum_{i=1}^m |\Si_i|.\]
Therefore, each $\Si_i$ has $c$-bounded first variation in $B_{\si_0}(p) \cap M$ and $\Theta^n(\||\Sigma_i|\|,p)=1$; by the Allard regularity theorem  \cite[Theorem 24.2]{Si83} and elliptic regularity, $\Si_i$ extends as a smooth, embedded $c$-hypersurface across $p$. Finally, by the maximum principle (Lemma \ref{L:classical MP}), we have $m=1$ or $m=2$ and this shows that $\ti{V}$ extends as an almost embedded $c$-hypersurface across $p$.

\subsection*{Step 5}

Finally, to complete the proof we show that $V$ coincides with $\ti{V}$ on a small ball about $p$. 

We will need the following simple corollary of the first variation formula.

\begin{lemma}
\label{L: tangent set of V}
For small enough $r$ the set
\[ \text{Tr}^V_p=\left\{ y \in \spt\|V\|\cap (B_r(p)\backslash\{p\}) : \begin{array}{l}
\VarTan(V, y) \text{ consists of an integer multiple of an }\\
\text{$n$-plane transverse to } \partial (B_{\dist(y, p)}(p)\cap M)
\end{array} \right\}\]
is a dense subset of $\spt \|V\|\cap B_r(p)$.
\end{lemma}

\begin{proof}
 Assume to the contrary that there exists $y\in \spt \|V\|\cap (B_r(p)\backslash\{p\})$, and some some neighborhood $U$ of $p$ in $B_r(p)\cap M$ such that the tangent plane $S_z$ of $V$ at each point $z\in U\cap \spt\|V\|$ is tangential to $\partial (B_{\dist(z, p)}(p)\cap M)$. Since $M$ is embedded in $\R^L$, the family $\{\partial (B_{\rho}(p)\cap M): 0<\rho<r\}$ forms a smooth foliation of $(B_{r}(p)\cap M)\backslash\{p\}$ for small enough $r$. Let $\nu$ be the outer unit normal of this smooth foliation. By our choice of $r_0$, the mean curvature on the foliation is  
\[ div_{T_y (\partial (B_{\dist(y, p)}(p)\cap M))} \nu >c. \] 
Consider the vector field $X=\varphi\cdot \nu$, where $\varphi\geq 0$ is a smooth cutoff function supported in $U$. Plugging $X$ into the first variation formula, we get
\begin{displaymath}
\begin{split}
\de V(X) & =\int div_{S_z} (\varphi \nu) d\|V\|(z)=\int \varphi\cdot div_{S_z} (\nu) d\|V\|(z)\\
              & >c \cdot \int \varphi d\|V\|(z) = c \int |X| d\|V\|(z).
\end{split}
\end{displaymath}
In the second equality we have used the fact that $S_z$ is tangential to the foliation and hence perpendicular to $\nu$. This contradicts that $V$ has $c$-bounded first variation and thus finishes the proof.
\end{proof}


\textit{Claim 5: For small enough $r$, $\spt\|V\|=\Si$ in the punctured ball $(B_r(p)\backslash\{p\})\cap M$.}
 
\vspace{.3cm}
\textit{Proof of Claim 5:}  We first prove that $\text{Tr}^V_p\subset \Si$, which combined with Lemma \ref{L: tangent set of V} will imply that $\spt\|V\|\cap (B_r(p)\backslash\{p\})\subset \Si.$ Fix $y \in \text{Tr}^V_p \cap (B_r(p) \backslash \{p\})$, and let $\rho=\dist(y, p)$. Consider $V^{**}_\rho$. By transversality we have $ y  \in \Clos(\spt\|V\|\cap B_\rho(p))$. On the other hand, $V^{**}_{\rho}=V^*=V$ inside $B_\rho(p)$, so by (\ref{E:corollary-max-principle}) we have
\begin{eqnarray}\nonumber \Clos(\spt\|V\|\cap B_\rho(p)) \cap \partial B_\rho(p)&=&\Clos(\spt\|V^{**}_\rho\|\cap B_\rho(p)) \cap \partial B_\rho(p)\\&\subset&\nonumber \Clos\left( \spt \|V^{**}_\rho\| \setminus \Clos(B_\rho(p))\right) \cap \partial B_\rho(p).\end{eqnarray}
Since $\spt \|V^{**}_{\rho}\|= \Si$ on $A_{\rho, s_2}(p)$, we therefore have $y \in \Si$. 

Next we show the reverse inclusion $\Si\subset \spt\|V\|$. Since $\Si$ extends across $p$ as an almost embedded hypersurface, we know that $T_y\Si$ is transverse to $\partial (B_{\dist(y, p)}(p)\cap M)$ for all $y\in\Si\cap B_r(p)$ for small enough $r$. 
Let $\rho$ and $V^{**}_\rho$ be as above, then $y\in \spt\|V^{**}_\rho\|$. By Proposition \ref{P:tangent-cone}, $\VarTan(V^{**}_\rho, y)=\{ \Theta^n(\|V^{**}_\rho\|, y) |T_y \Si| \}$. By the transversality, we then have $y\in \Clos(\spt\|V^{**}_\rho\|\cap B_\rho(p))$, so since $V^{**}_\rho=V$ inside $B_\rho(p)$ we conclude that $y\in\Clos(\spt\|V\|\cap B_\rho(p))\subset \spt\|V\|$ as desired. \qed

\vspace{.3cm}

Note that we do not have the Constancy Theorem (c.f. \cite[41.1]{Si83}) for varifolds with bounded first variation. In order to show that $V$ coincides with $\Si$ near $p$, our strategy is to show that $V=\ti{V}$ as varifolds in a neighborhood of $p$. By the transversality argument as above, we can first show that the densities of $V$ and $\ti{V}$ are identical along $\Si\cap (B_r(p)\backslash\{p\}$, where $r$ is chosen as in Claim 5. 

\vspace{.3cm}
\textit{Claim 6: $\Theta^n(\|V\|, \cdot)=\Theta^n(\|\ti{V}\|, \cdot)$ on $\Si\cap B_r(p)\backslash\{p\}$.} 

\vspace{.3cm}
\textit{Proof of Claim 6:} 
Let $y\in\Si$ and $\rho=\dist(y,p) <r$ be as above. Then since $V^{**}_\rho=V$ inside $B_\rho(p)$, by transversality and Proposition \ref{P:tangent-cone} we have $\VarTan(V, y)=\VarTan(V^{**}_\rho, y)$. But $V^{**}_\rho = \tilde{V}$ on $A_{\rho,s_2}(p)$, so we must have $\VarTan(V^{**}_\rho, y)=\{ \Theta^n(\|\ti{V}\|, y)|T_y \Si| \}$. Thus $\Theta^n(\|V\|, y)=\Theta^n(\|\ti{V}\|, y)$.  \qed

Combining Claims 5 and 6 yields that $V=\tilde{V}$ on $B_r(p)\cap M$. This finishes the proof of Step 5, and hence also completes the proof of the main Theorem \ref{T:main-regularity}.\end{proof}

\appendix

\section{An interpolation lemma}
\label{A:An interpolation lemma}

The first lemma below was essentially due to Pitts \cite[Lemma 3.8]{P81}, but the modification to find the interpolation sequence using boundaries of Caccioppoli sets was completed by the first author \cite[Proposition 5.3]{Zhou15b}. 
\begin{lemma}
\label{L:interpolation1}
Suppose $L>0$, $\eta>0$, $W$ is a compact subset of $U$, and $\Om\in\C(M)$. Then there exists $\de=\de(L, \eta, U, W, \Om)>0$, such that for any $\Om_1, \Om_2\in \C(M)$ satisfying
\begin{itemize}
\item[$(a)$] $\spt(\Om_i-\Om)\subset W$, $i=1, 2$,
\item[$(b)$] $\M(\partial \Om_i)\leq L$, $i=1, 2$,
\item[$(c)$] $\F(\partial\Om_1-\partial\Om_2)\leq \de$,
\end{itemize}
there exist a sequence $\Om_1=\La_0, \La_1, \cdots, \La_m=\Om_2\in \C(M)$ such that for each $j=0, \cdots, m-1$,
\begin{itemize}
\item[(i)] $\spt(\La_j-\Om)\subset U$,
\item[(ii)] $\Ac(\La_j)\leq \max\{\Ac(\Om_1), \Ac(\Om_2)\}+\eta$,
\item[(iii)] $\M(\partial\La_j-\partial\La_{j+1})\leq \eta$.
\item[(iv)]
\[ \M(\partial\La_j)\leq \max\{\M(\partial\Om_1), \M(\partial\Om_2)\}+\frac{\eta}{2}.\]
\item[(v)] 
\[ \M(\La_j-\Om_i)\leq \frac{\eta}{2c}, \text{ for } i =1, 2. \]
\end{itemize}
\end{lemma}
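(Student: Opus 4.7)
The plan is to follow the Pitts-style interpolation argument \cite[Lemma 3.8]{P81}, in the form adapted to boundaries of Caccioppoli sets by the first author in \cite[Proposition 5.3]{Zhou15b}, with the addition of tracking the $\Ac$-functional alongside the mass.

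First, I would set up a Federer--Fleming-type decomposition. By hypothesis (a), both $\Om_i$ agree with $\Om$ outside $W$, so by the Constancy Theorem $[[\Om_1]]-[[\Om_2]]$ is an integer-multiplicity $(n+1)$-current supported in $W$. The flat closeness (c), together with the mass bound (b), allows one to push $\partial\Om_1$ and $\partial\Om_2$ onto a common sufficiently fine cubical grid intersected with $U$ via the deformation theorem. The difference of these polyhedral approximations then decomposes as
\[ [[\Om_1]]-[[\Om_2]] \;=\; E_1 + E_2 + \cdots + E_m, \]
where each $E_j$ is (up to sign) the indicator current of a Caccioppoli set $E_j\subset U$ contained in a single cell of the grid, with $\M(\partial E_j)$ and $\mH^{n+1}(E_j)$ both arbitrarily small provided $\de$ and the mesh size are small enough (depending on $L, \eta, U, W, \Om$).

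Next, I would set $\La_0=\Om_1$ and $\La_j = \La_{j-1}\triangle E_j$, so that $\La_m=\Om_2$. Then (i) is immediate from the grid being contained in $U$; (iii) follows from $\M(\partial\La_j-\partial\La_{j-1}) = \M(\partial E_j)$; and (iv), (v) are obtained by bookkeeping: the cumulative volume $\M(\La_j - \Om_1) \leq \sum_{k\leq j}\mH^{n+1}(E_k)$ is controlled by the total volume of symmetric difference, which is in turn controlled by $\delta$ (together with the mass bound $L$). Finally, (ii) follows at once from (iv) and (v): writing
\[ \Ac(\La_j) = \M(\partial\La_j) - c\,\mH^{n+1}(\La_j), \]
the bound (iv) controls the first term by $\max\{\M(\partial\Om_1),\M(\partial\Om_2)\} + \eta/2$, while $|c\,\mH^{n+1}(\La_j) - c\,\mH^{n+1}(\Om_i)| \leq c\,\M(\La_j-\Om_i) \leq \eta/2$ by (v) for an appropriate choice of $i\in\{1,2\}$.

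The main obstacle is to ensure that the decomposition $E_1,\dots,E_m$ can be chosen so that each partial modification $\La_j$ is a genuine Caccioppoli set (not merely an integral cycle of higher multiplicity), that the cumulative mass $\M(\partial\La_j)$ does not accumulate above $\max\{\M(\partial\Om_1),\M(\partial\Om_2)\}+\eta/2$ over the course of $m$ steps, and that the supports remain in $U$. This is precisely the technical content of \cite[Proposition 5.3]{Zhou15b}, where the $E_j$ are produced in a ``monotone'' manner so that the interpolation transports $\Om_1$ onto $\Om_2$ without wasteful back-and-forth; the additional control on $\Ac$ required here is then automatic from the mass and volume estimates above.
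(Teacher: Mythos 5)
Your proposal is essentially the same as the paper's proof: both delegate the construction of a Caccioppoli interpolation satisfying (i), (iii), (iv) to the Pitts--Zhou deformation machinery (\cite[Lemma 3.8]{P81}, \cite[Proposition 5.3]{Zhou15b}), obtain (v) by bounding the volume of the accumulated symmetric difference in terms of $\de$ via the mass bound and an isoperimetric estimate, and then derive (ii) directly from (iv) and (v) by writing $\Ac(\La_j)=\M(\partial\La_j)-c\,\mH^{n+1}(\La_j)$. The one step you gloss over that the paper handles explicitly is the covering argument making $\de$ uniform as $\Om_2$ varies over the relevant compact family, but this is implicitly absorbed once you defer the construction to the cited interpolation lemma.
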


\begin{proof}
Note that by a covering argument, one only needs to prove the case when $\partial\Om_2$ is fixed, and in this case $\de=\de(L, \eta, U, W, \Om, \Om_2)$. 

Under our assumptions on $\Om_1, \Om_2$, there are two issues for us that differ from the conclusions of Pitts \cite[Lemma 3.8]{P81}: 
\begin{enumerate}
\item we require the interpolating sequence $\{\partial\La_j\}$ to consist of boundaries of Caccioppoli sets, while in \cite{P81} the interpolating sequence consists of integral currents $\{T_j\in \Z_n(M)\}$ (using notations therein);
\item for point (ii), we require that $\Ac(\La_j)$ does not increase much from $\Ac(\Om_1), \Ac(\Om_2)$, while in \cite{P81} it was only proven that $\M(T_j)\leq \max\{\M(\partial\Om_1), \M(\partial\Om_2)\}+\eta$. 
\end{enumerate}

These two minor points can be easily deduced from the mentioned work of Zhou \cite{Zhou15b}, and now we point out the necessary details. In fact, with regards to point (1), the proof of \cite[Proposition 5.3]{Zhou15b} already proceeds using boundaries, and we will first justify properties $(i, iii, iv)$. 

Specifically, given $L, \eta, \Om, \Om_2$ satisfying the assumptions in the Lemma, \cite[Proposition 5.3]{Zhou15b} (applied to the case when $m=1, l=0$ therein) gives the desired $\de>0$, such that if $\Om_1$ satisfies the assumptions (a-c), then there exists a sequence $\Om_1=\La_0, \La_1, \cdots, \La_m=\Om_2\in \C(M)$ such that for each $j=0, \cdots, m-1$, properties (iii) and (iv) are satisfied.

Although in \cite[Proposition 5.3]{Zhou15b} $\Om_i$ were not assumed to satisfy the additional condition $\spt(\Om_i-\Om)\subset W$. It can be seen that the construction in \cite[Proposition 5.3]{Zhou15b} indeed satisfies the required property (i) under this additional condition. 

Note that properties (iv, v) immediately imply (ii), so it remains only to show (v). Indeed, from the construction in \cite[Proposition 5.3]{Zhou15b}, one knows that the symmetric difference $\La_j\lap \Om_i$ is a subset of the union of the symmetric difference $\Om_1\lap \Om_2$ together with finitely many (depending only on $L$ and $\eta$) balls of arbitrarily small radii in $U$. Since $\vol(\Om_1\lap\Om_2)=\M(\Om_1-\Om_2)=\F(\partial\Om_1, \partial\Om_2)$ by the isoperimetric lemma \cite[Lemma 7.3]{Zhou15b}, one thus obtains (v) by taking $\de$ small enough.
\end{proof}

\section{Interpolation process}
\label{A:proof of claim 2}

\begin{proof}[Proof of Claim 2 in Proposition \ref{P:tightening}]
Here we describe the construction of $\{\phi_i\}$ by interpolating $\{\phi^1_i\}$. 

Fix $i\in\N$ and consider a $1$-cell $\al\in I(1, k_i)$. We only need to show how to interpolate $\phi^1_i$ when restricted to $\al_0$. For notational simplicity we write $\al=[0, 1]$. For $x\in \alpha$ let $\ti{X}(x)$ be the linear interpolation between $\ti{X}_i(0)=\ti{X}(|\partial\phi^*_i(0)|)$ and $\ti{X}_i(1)=\ti{X}(|\partial\phi^*_i(1)|)$. The continuity of the map $V\to \ti{X}(V)$ implies that $\|\ti{X}_i(x)-\ti{X}_i(0)\|_{C^1(M)}\to 0$ uniformly as $i\to\infty$. Define $\bar{Q}_i(x)$ to be the push-forward of $\phi^*_i(0)$ by the flow of $\ti{X}_i(x)$ up to time $1$; this gives a map $\bar{Q}_i: \alpha \rightarrow \mathcal{C}(M)$. Note that $\partial\bar{Q}_i: \al\to \Z_n(M)$ is continuous under the $\mF$-metric.

Since $\bar{Q}_i(x)$ and $\phi^1_i(0)$ are the push-forwards of the same initial set $\phi^*_i(0)$ under the flows of $\ti{X}_i(x)$ and $\ti{X}_i(0)$ respectively, we have
\begin{equation}
\label{E:mF fineness control1}
\left. \begin{array}{cl}
\mF(\partial \bar{Q}_i(x), \partial \phi^1_i(0))\to 0 \\
\M(\bar{Q}_i(x)- \phi^1_i(0))\to 0
\end{array} \right.,  \text{ uniformly in $x,\alpha$ as $i\to\infty$}.
\end{equation}
As $\bar{Q}_i(1)$ and $\phi^1_i(1)$ are the respective push-forwards of $\phi^*_i(0)$ and $\phi^*_i(1)$ under the same flow of $\ti{X}_i(1)$, we have
\begin{equation}
\label{E:mF fineness control2}
\M(\partial \bar{Q}_i(1)-\partial \phi^1_i(1))\to 0, \text{ uniformly in $\alpha$ as $i\to\infty$}.
\end{equation}

Now we can apply the interpolation result \cite[Theorem 5.1]{Zhou15b} (see also \cite[Theorem 13.1]{MN14}) to $\bar{Q}_i$, which gives that for any $\eta>0$, there exist $l_\eta>0$ and $Q_i: \al(l_\eta)_0\to \C(M)$, such that
\begin{itemize}
\item[(i)] given $x\in \al(l_\eta)_0$, 
\[ \M(\partial Q_i(x)) \leq \M(\partial \bar{Q}_i(x))+\eta/2, \]
so by the same argument as in the proof of point (v) of Lemma \ref{L:interpolation1}, 
\[  \M(Q_i(x)-\bar{Q}_i(x)) \leq \eta/(2c), \]
and hence
\[ \Ac(Q_i(x))\leq \Ac(\bar{Q}_i(x))+\eta; \]

\item[(ii)] $\f(Q_i)\leq \eta$;

\item[(iii)] $\sup\{\F(\partial Q_i(x)-\partial\bar{Q}_i(x)): x\in \al(l_\eta)_0\}<\eta$.
\end{itemize}
When $\eta\to 0$, by $(i, iii)$ and \cite[2.1(20)]{P81} (see also \cite[Lemma 4.1]{MN14}), we have
\[ \lim_{\eta\to 0}\sup\{\mF(\partial Q_i(x), \partial \bar{Q}_i(x)): x\in \al(l_\eta)_0\}=0. \]

Take a sequence $\eta_i\to 0$, and denote $l_i=k_i+l_{\eta_i}+1$, then we construct $\phi_i: I(1, k_i+l_{\eta_i}+1)\to \C(M)$ by defining $\phi_i$ on each $\al(l_{\eta_i}+1)_0$ by
\[ \phi_i(x)= \left\{ \begin{array}{cl}
Q_i(3x) & \text{ for $x\in [0, 1/3]\cap \al(l_{\eta_i}+1)_0$ } \\
\phi_i^1(1) & \text{ otherwise.}
\end{array} \right. \]
The desired properties (a, b, c, d) of $\phi_i$ follow straightforwardly from (\ref{E:mF fineness control1})(\ref{E:mF fineness control2}) and the properties of $Q_j$. Since $\bar{Q}_i$ is obtained from a continuous deformation from $\phi^*_i$, a further interpolation argument shows that $S$ is homotopic to $S^*$, and hence we finish the proof of Claim 2.
\end{proof}

\section{Good replacement property and regularity}
\label{A:Good replacement property and regularity}

Here we record the notions of good replacements and the good replacement property.  Recall the following definitions by Colding-De Lellis \cite{CD03}. Consider two open subsets $W\subset\subset U\subset M^{n+1}$.

\begin{definition}
\label{D:CD good replacement}
\cite[Definition 6.1]{CD03}. Let $V \in V_n(U)$ be stationary in $U$. A stationary varifold $V' \in V_n(U)$ is said to be a {\em replacement for $V$ in $W$} if
\[ \begin{array}{cl}
& V'\lc (U\backslash \overline{W})=V\lc (U\backslash \overline{W}),\  \|V'\|(U)=\|V\|(U), \text{ and }\\
& V'\lc W \text{ is an embedded stable minimal hypersurface $\Si$ with } \partial \Si\subset \partial W.
\end{array} \]
\end{definition}

\begin{definition}
\label{D:CD good replacement property}
\cite[Definition 6.2]{CD03}. Let $V \in V_n(U)$ be stationary in $U$. $V$ is said to have the  {\em good replacement property} in $W$ if
\begin{itemize}
\item[(a)] there is a positive function $r: W\to \R$ such that for every annulus $A_{s, t}(x)\cap M\subset W$ with $0<s<t<r(x)$, there is a replacement $V'$ for $V$ in $A_{s, t}(x)\cap M$;

\item[(b)] the replacement $V'$ has a replacement $V''$ in  every annulus $A_{s, t}(y)\cap M\subset W$ with $0<s<t<r(y)$;

\item[(c)] $V''$ has a replacement $V'''$ in every annulus $A_{s, t}(z)\cap M\subset W$ with $0<s<t<r(z)$.
\end{itemize}
\end{definition}

Note that our formulations are local compared to those in \cite{CD03}. Indeed, the proofs of \cite[Proposition 6.3]{CD03} and \cite[Theorem 2.8]{DT13} are purely local, so the following proposition still holds: 

\begin{proposition}
\label{P:CD regularity}
\cite[Proposition 6.3]{CD03}, \cite[Proposition 2.8]{DT13}. When $2\leq n\leq 6$, if $V\in V_n(U)$ has the good replacement property in $W$, then $V\lc W$ is an integer multiple of some smooth embedded minimal hypersurface $\Si$.
\end{proposition}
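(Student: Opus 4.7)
The plan is to follow the Colding-De Lellis regularity scheme, which adapts Pitts' original argument for varifolds with the good replacement property. The approach proceeds in two main stages: first, proving that every tangent cone of $V$ at a point of $\spt\|V\| \cap W$ is an integer multiple of an $n$-plane; and second, using Allard's regularity theorem together with successive replacements to upgrade this to full smoothness.

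For the first stage, I would fix $p \in \spt\|V\| \cap W$ and a blowup sequence $r_i \to 0$, producing a tangent varifold $C \in \VarTan(V, p)$ which is a stationary cone in $T_p M$. The key observation is that the good replacement property passes to blowup limits: given an annulus $A_{s,t}(0) \subset T_pM$, one constructs replacements of $V$ in the rescaled annuli $A_{r_i s, r_i t}(p) \cap M$, then pushes forward by $(\bleta_{p, r_i})_\#$ and extracts a varifold limit, which one verifies is a replacement for $C$ in $A_{s,t}(0)$. The triple replacement hypothesis of Definition \ref{D:CD good replacement property} transfers to $C$. Since each replacement is by definition a stable embedded minimal hypersurface in an annulus, the Schoen-Simon curvature estimates (valid for $2 \leq n \leq 6$) combined with the Bernstein theorem force the triply-replaced cone to be a flat $n$-plane with integer multiplicity. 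A classical maximum-principle or halfspace theorem argument then forces $C$ itself to be a plane.

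For the second stage, with tangent cones now planar, the density $\Theta^n(\|V\|, \cdot)$ is integer-valued and upper semicontinuous on $\spt\|V\| \cap W$. At density-one points, Allard's theorem delivers local smoothness immediately, and these points are dense in $\spt\|V\| \cap W$ by the monotonicity formula. To extend regularity across potential singularities (points of density $\geq 2$), I would take two overlapping concentric annuli $A_1, A_2$ around such a point $p$, apply Definition \ref{D:CD good replacement property} to construct replacements $V'$ in $A_1$ and then $V''$ of $V'$ in $A_2$, each being a stable embedded minimal hypersurface on the respective annulus. On the overlap $A_1 \cap A_2$ the two must match in $C^1$ by the strong maximum principle for two-sided minimal graphs, and then by unique continuation they coincide on the entire overlap. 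Varying the inner radius continuously extends a smooth stable minimal hypersurface all the way to a punctured neighborhood of $p$; planarity of the tangent cone, combined with Allard's theorem applied to the extension, removes the puncture.

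The main obstacle will be the careful matching of $V'$ and $V''$ across the boundary sphere common to both annuli, which requires showing that their blowup limits coincide as planes (with matching multiplicity) before unique continuation can be invoked. The CMC analog of this step occupies all of Section \ref{S:Regularity for c-min-max varifold} of the present paper, and is genuinely more difficult than the setting here: in the minimal case there is no touching set, no ambiguity in the orientation of the mean-curvature vector, and the classical two-sided minimal surface maximum principle replaces the one-sided Lemma \ref{L:classical MP}. Apart from this matching and the removability of the final puncture, the remaining steps are essentially formal consequences of the triple replacement property and standard geometric measure theory, as carried out in detail in \cite{CD03} and \cite{DT13}.
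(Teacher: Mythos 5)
The paper itself does not prove Proposition \ref{P:CD regularity}; it cites it from \cite[Proposition 6.3]{CD03} and \cite[Proposition 2.8]{DT13}, noting only that those proofs are purely local so the localized formulation transfers. Your sketch is a sound high-level reconstruction of the Colding--De Lellis argument: you correctly identify the two stages (tangent cones are integer-multiplicity planes via blown-up replacements plus the Schoen--Simon curvature estimates and the Bernstein theorem; then overlapping annular replacements matched via the maximum principle and unique continuation, with Allard closing the puncture), and you correctly flag the $C^1$ matching of $V'$ and $V''$ across the boundary sphere as the technical crux, as well as noting that the CMC variant carried out in Section \ref{S:Regularity for c-min-max varifold} of the present paper is genuinely harder.

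One imprecision worth flagging: the sentence ``at density-one points, Allard's theorem delivers local smoothness immediately, and these points are dense in $\spt\|V\| \cap W$ by the monotonicity formula'' is not how \cite{CD03} proceeds and is not needed. The argument there does not stratify the support by density; it instead constructs a replacement $V'$ on an annulus---smooth and embedded \emph{by definition} of a replacement---and then shows $V$ coincides with $V'$ near the center of the annulus using that the tangent planes of $V$ (which the first stage has shown are all planes of integer multiplicity) are transverse to the small spheres $\partial B_\rho(p)$ on a dense subset of $\spt\|V\|$. That transversality argument, together with unique continuation and the removal of the central puncture, is what does the work your Allard-at-density-one sentence was reaching for. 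Apart from this, the structure of your sketch is faithful to the cited proof.
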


\bibliography{min-max-cmc}
\bibliographystyle{plain}

\end{document}